\title{On rings of differential operators derived from  automorphic forms}
\author{Atsuhira Nagano}
\def\bigzerou{\smash{\lower1.7ex\hbox{\b 0}}}
\newtheorem{thm}{Theorem}[section]
\newtheorem{df}{Definition}[section]
\newtheorem{lem}{Lemma}[section]
\newtheorem{prop}{Proposition}[section]
\newtheorem{rem}{Remark}[section]
\newtheorem{exap}{Example}[section]
\newtheorem{cor}{Corollary}[section]
\def\comment#1{{ }}
\begin{document}
\maketitle
\setlength{\baselineskip}{10 pt}

\begin{abstract}
We study linear ordinary differential equations
 which are
 analytically parametrized on Hermitian symmetric spaces
and  invariant under the action of  symplectic groups.
They are generalizations of the classical Lam\'e equation.
Our main result gives
a closed relation between such differential equations and automorphic forms for symplectic groups.
Our study is based on techniques concerning  with
 the monodromy of complex differential equations,
the Baker-Akhiezer functions
and algebraic curves attached to rings of differential operators.
\end{abstract}

\renewcommand{\thefootnote}{\fnsymbol{footnote}}
\footnote[0]{Keywords: Ordinary Differential Operators ; Automorphic Forms ; Algebraic Curves. }
\footnote[0]{Mathematics Subject Classification 2010:  Primary 16S32 ; Secondary 47E05, 32N10,  14G35, 14H70, 33E05.}
\footnote[0]{Running head: Differential operators derived from automorphic forms}
\vspace{2mm}

\footnote[0]{
Note:
This is the corrected version of the published article DOI: 10.1007/s11785-017-0663-7. 
Typos and misleading phrases are corrected here.
Especially, for simplicity, criteria in Section 1.5 and 2.7 are corrected using arithmetic genera of algebraic curves.
}
\setlength{\baselineskip}{14 pt}
\renewcommand{\thefootnote}{\arabic{footnote}}

\section*{Introduction}

The main purpose of this paper is  to study linear ordinary differential operators of a complex independent variable
which are analytically  parametrized on  Hermitian symmetric domains
and invariant under the action of symplectic groups.
Our main result gives a closed relation
between commutative rings of such differential operators
and automorphic forms for symplectic groups.

Let us start the introduction with a typical example of the differential equations we study: the Lam\'e differential equation
\begin{align}\label{LameDiff}
\Big( -\frac{\partial^2}{\partial z^2} + B \wp(\Omega,z) \Big) u=X u,
\end{align}
where $B,X \in \mathbb{C}$ and $\wp(\Omega,z)$ is the Weierstrass $\wp$-function with the double periods $1$ and $\Omega \in \mathbb{H} = \{z\in \mathbb{C} | {\rm Im} (z) > 0\}$.
The Lam\'e differential equation has the regular singular points at every $z_0\in \mathbb{Z} + \mathbb{Z} \Omega$.
If $B=\rho (\rho +1),$
the characteristic exponents  at every singular point are $\rho+1$ and $-\rho.$
When $\rho\in \mathbb{Z}_{> 0}$,
a system of basis of the space  of solutions of (\ref{LameDiff}) is generated by $\Lambda(z)$ and $\Lambda (-z)$.
Here,
$
\displaystyle
\Lambda (z) = \prod_{j=1}^\rho \frac{\sigma (\Omega,z+ \kappa_j)}{\sigma (\Omega,z) } e^{-z \zeta (\Omega,\kappa_j)},
$
where 
$\sigma(\Omega,z)$ and $\zeta(\Omega,z)$ are 
 the classical Weierstrass functions
 and $\kappa_j$ $(j=1,\cdots,\rho)$ can be calculated  by $X$ (for detail, see \cite{WW}).
 We remark that
 $\Lambda$ is  a single-valued function of $z$.
 However,
 for generic $\rho \in \mathbb{C}$,
 the solutions of (\ref{LameDiff}) are  multivalued on $\mathbb{C}-(\mathbb{Z} + \mathbb{Z} \Omega).$
 The Lam\'e equation is an important topic in mathematics.
 For example, 
 the periodic solutions of (\ref{LameDiff}) is studied in many body theoretical physics.
Also,
via the double covering  $E\rightarrow \mathbb{P}^1(\mathbb{C})$,
where $E $ is an elliptic curve with the double periods $1$ and $\Omega$,
the equation (\ref{LameDiff}) gives a Fuchsian differential equation with an accessary parameter.
Moreover,
special types of (\ref{LameDiff})  promoted a development of the theory of integrable systems and finite zone problems.
For example, see \cite{WW}, \cite{DMN}, \cite{MM} and \cite{Takemura}.
In these researches,
to the best of the author's knowledge,
the double periodicity of the coefficient $\wp(\Omega,z)$ of (\ref{LameDiff}) played an essential role.

For our purpose,
we focus on another important property of the Lam\'e equation:
the coefficient $\wp(\Omega,z)$ satisfies the transformation law
\begin{align}\label{LameJacobi}
\wp\Big(\frac{a\Omega +b}{c\Omega +d},\frac{z}{c\Omega+d}\Big) = (c\Omega +d)^2 \wp(\Omega,z)
\end{align}
for any $\begin{pmatrix} a & b \\ c & d \end{pmatrix} \in SL(2,\mathbb{Z})$.
Due to (\ref{LameJacobi}),
the Lam\'e equation becomes to be invariant under the action of the elliptic modular group $SL(2,\mathbb{Z})$.
Namely,
via the transformation $\displaystyle (\Omega,z,X) \mapsto (\Omega_1,z_1,X_1)=\Big(\frac{a\Omega +b}{c\Omega +d},\frac{z}{c\Omega+d},(c\Omega +d)^2 X\Big),$
the differential equation (\ref{LameDiff}) can be identified with 
$
\displaystyle \Big( -\frac{\partial ^2}{\partial z_1^2} + B \wp(\Omega_1,z_1) \Big) u=X_1 u.
$
By the way,
holomorphic functions on $\mathbb{H}$ which are invariant under the action of $SL(2,\mathbb{Z})$ are called elliptic modular forms.
The invariance between two differential equations suggests a strong and non-trivial relation
between the Lam\'e equation and elliptic modular forms.
Furthermore,  elliptic modular forms are quite important in number theory (see \cite{S71}).
The author expects that the Lam\'e equation may have some effective applications in number theory.

Based on the above observation and expectation,
we study a class of ordinary differential equations $P u =Xu$ 
of a complex variable $z$ (for detail, see Definition \ref{DfDiff}).
Here,
the differential operator 
$$
P=
\frac{\partial^{N}}{\partial z^{N}} +a_2(\Omega,z) \frac{\partial^{N-2}}{\partial z^{N-2}} +a_3(\Omega,z)\frac{\partial ^{N-3}}{\partial z^{N-3}} +\cdots + a_N(\Omega,z)
$$
is parametrized by $\Omega$ of a product $\mathbb{H}_n^g$ of the Siegel upper half planes 
and invariant under the action of a congruence subgroup $\Gamma$ of the symplectic group.
Such a class contains the Lam\'e equation
because the action of the  group $\Gamma$
on  $\mathbb{H}_n^g$ is a natural extension of the action of $SL(2,\mathbb{Z})$ on $\mathbb{H}.$
In this paper,
we study  commutative rings of differential operators which commute with $P.$

Here, we recall the importance of  commutative rings of differential operators.
Commutative rings of differential operators  were firstly studied by
Burchnall and  Chaundy \cite{BC}.
In the later half of the 20th century,
the relation between
commutative rings of differential operators and algebraic curves 
was studied in the celebrated works of Krichever \cite{Kr} and Mumford \cite{Mum}.
Their results are very important
  in the theory of integrable systems.  
Also, they yielded a substantial progress
 of the geometry of Riemann surfaces and abelian varieties.
In fact,
they were used to resolve the classical Riemann-Schottky problem for Riemann surfaces (\cite{Shiota}, \cite{KS}).

In this paper,
we will give 
a relation between  commutative rings of differential operators and automorphic forms.
We study the structures of   rings  of differential operators 
which are invariant under the action of $\Gamma$
 and   commute with the fixed differential operator $P$.
Such a ring will be denoted by  $\mathcal{D}^P$ in Section 2.
Our main result gives an isomorphism $\chi:\mathcal{D}^P \simeq S^P$ of rings,
where $S^P$ is a ring of generating functions
for sequences of automorphic forms for $\Gamma$
(for the definition, see Definition \ref{dfDP} and \ref{dfSP}).
Here, we note that
automorphic forms are natural extension 
of elliptic modular forms (see Definition \ref{dfAutomorph}).
These rings are graded by the weight $K$ induced from the action of $\Gamma$:
$\displaystyle \mathcal{D}^P = \bigoplus_{K=0}^\infty \mathcal{D}^P_K , S^P = \bigoplus_{K=0}^\infty S^P_K .$
The isomorphism $\chi$ induces an isomorphisms among three vector spaces:
$$
\mathcal{D}_K^P \xrightarrow{\chi} S_K^P \xrightarrow{} W_K
$$
(see Theorem  \ref{ThmRef}).
Here, $W_K$ is a vector space 
explicitly parametrized by automorphic forms for $\Gamma$.
Therefore, 
the structure of the ring $\mathcal{D}^P$
is closely related to the structure  of  the rings of automorphic forms.

For our study,
we will use the Baker-Akhiezer functions.
In  \cite{Kr},
the 
 Baker-Akhiezer functions give  solutions of  differential equations whose coefficients are smooth functions.
However, for our purpose,
it is natural to study differential equations whose coefficients have poles (precisely, see Remark \ref{RemOur}).
So, we need to modify the techniques of the Baker-Akhiezer functions for  differential equations with some singularities.
Section 1 will be devoted to such  techniques.
Namely, we will study the multivalued Baker-Akhiezer functions and its monodromy around singular points of $P$.

In Section 2,
we prove our main result.
This is based on an invariance  of the multivalued Baker-Akhiezer functions under the action of $\Gamma$,
 which is proved in Theorem \ref{ThmPsi}.
Moreover, 
we will see the following results:
 \begin{itemize}

 \item 
 For  fixed $P$ and an operator $Q\in \mathcal{D}^P$,
 there exists an algebraic curve
$$
\displaystyle \mathcal{R}_\Omega: \displaystyle \sum_{j,k} f_{j,k} (\Omega) X^j Y^k =0
$$
such that $(X,Y)=(P,Q)$ gives a point of $\mathcal{R}_\Omega.$
Here,
the coefficients  $f_{j,k} (\Omega)$ are automorphic forms for $\Gamma$ (see Theorem \ref{ThmRiemannSurface}).
Namely,
from the differential operators $P$ and $Q$,
we  obtain a family of algebraic curves 
$\{\mathcal{R}_\Omega| \Omega\in\mathbb{H}_n^g\}$
 parametrized on $\mathbb{H}_n^g$ via automorphic forms.

\item
If the coefficients of the fixed operator $P$ have poles in $z$-plane,
the coefficients of $Q\in\mathcal{D}^P$ can be multivalued functions of $z$
(for detail, see Proposition \ref{PAsz} and Theorem \ref{PAs}).
However, 
if the genus of the algebraic curve $\mathcal{R}_\Omega$ is small enough,
every coefficients of $Q\in \mathcal{D}^P$ must be single-valued.
We will have a sufficient criterion
for $Q$
to be single-valued  (see Theorem \ref{ThmCriteriaAuto}).

\end{itemize}

Throughout the paper,
the Lam\'e differential equation is a  prototype of our story.
Via our new results between differential operators and automorphic forms,
we have a simple interpretation of classical results
of the Lam\'e equation via elliptic modular forms
(Example \ref{ExapLame}, \ref{ExapLameSpecial} and \ref{ExapLameRiemann}).
This is an important example of our story.

Our results enable us to study differential equations
based on the structures of rings of automorphic forms.
In number theory,
there are many 
famous generalizations of elliptic modular forms
(for example,
Siegel modular forms, Hilbert modular forms, etc.).
Our results can be applied to such generalized forms also.
The author expects that
this paper may give a first step
of the study
of differential equations from the viewpoint of automorphic forms.

\section{Commutative rings of differential operators with singularities and multivalued Baker-Akhiezer functions}

\subsection{Multivalued Baker-Akhiezer functions}
In this subsection,
we obtain the multivalued Baker-Akhiezer functions
for the ordinary differential operator  
\begin{align}\label{Pz}
P_{z}=
\frac{d^{N}}{d z^{N}} +a_2(z) \frac{d^{N-2}}{d z^{N-2}} +a_3(z)\frac{d ^{N-3}}{d z^{N-3}} +\cdots + a_N(z)
\end{align}
of the complex variable $z$.
Here, we assume the coefficients 
 $a_2(z), \cdots, a_N(z)$ are meromorphic functions of $z$.
 More precisely,  we assume that $a_2(z), \cdots, a_N(z)$ are holomorphic on  $\mathbb{C}-\mathcal{N}$,
 where  $\mathcal{N}$  is the union of the sets of the poles of $a_j(z)$ $(j=2,\cdots,N)$.

\begin{rem}
If a differential operator
$
P_z^0=\frac{d^{N}}{d z^{N}} + a_1^0 (z) \frac{d^{N-1}}{d z^{N-1}}  +a_2^0(z) \frac{d^{N-2}}{d z^{N-2}} +\cdots + a_N^0 (z) 
$
is given,
by a gauge transformation $v P_z^0 v^{-1}$ for some unit function $v=v(z)$,
$P_z^0$ is transformed to $P_z .$
So, in our study,
we only consider the differential operator in the form (\ref{Pz}) without loss of generality.
\end{rem}

Let $\mathfrak{X}$ be the universal covering of $\mathbb{C}-\mathcal{N}.$
By taking a fixed point $w\in \mathbb{C}-\mathcal{N}$,
any $s\in \mathfrak{X}$ is represented by
$
s=(z,[\gamma]),
$
where $z\in \mathbb{C}-\mathcal{N}$, $\gamma$ is an arc in $\mathbb{C}-N$ from $w$ to $z$ and
 $[\gamma]$ is the homotopy class of $\gamma$.
We note that $z $ gives a local coordinate of $\mathfrak{X}$.

\begin{prop}\label{PropPsiz}
There exists the unique formal solution 
$\Psi((z,[\gamma]),w,\lambda)$
of the differential equation
\begin{align}\label{sp-diffz}
P_{z} u = \lambda^N u
\end{align}
in the form
\begin{align}\label{Psiz}
\Psi((z,[\gamma]),w,\lambda )=\Big(\displaystyle \sum_{s=0}^\infty  \xi_s((z,[\gamma]),w) \lambda^{-s}\Big) e^{\lambda(z-w)}
\end{align}
such that
\begin{align}\label{ConditionPsiz}
\begin{cases}
& \xi_0((z,[\gamma]),w)\equiv 1,\\
& \xi_s((w,[id]),w) = 0 \quad (s\geq 1).
\end{cases}
\end{align}
Here, $\xi_s$ are locally holomorphic functions  of $(z,w)$. 
 \end{prop}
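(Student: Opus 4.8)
The plan is to plug the ansatz (\ref{Psiz}) into (\ref{sp-diffz}), remove the exponential by conjugation, and solve the resulting hierarchy of first-order equations for the $\xi_s$ recursively; the only genuinely new point compared with the classical Baker--Akhiezer recursion is that, because $\mathbb{C}-\mathcal{N}$ is not simply connected, the integrations producing the $\xi_s$ must be performed on the universal cover $\mathfrak{X}$. Writing $\Psi=\Phi\,e^{\lambda(z-w)}$ with $\Phi=\sum_{s\ge 0}\xi_s\lambda^{-s}$ and using $e^{-\lambda(z-w)}\tfrac{d}{dz}\bigl(e^{\lambda(z-w)}\,\cdot\,\bigr)=\tfrac{d}{dz}+\lambda$, the equation $P_z\Psi=\lambda^N\Psi$ becomes $\widetilde P_z\Phi=\lambda^N\Phi$ with
$$
\widetilde P_z=\Bigl(\tfrac{d}{dz}+\lambda\Bigr)^{N}+a_2(z)\Bigl(\tfrac{d}{dz}+\lambda\Bigr)^{N-2}+\cdots+a_N(z).
$$
By the binomial theorem $\widetilde P_z-\lambda^N=N\lambda^{N-1}\tfrac{d}{dz}+\sum_{m=0}^{N-2}\lambda^m D_m$, where each $D_m$ is a differential operator in $z$ whose coefficients are polynomials in $a_2,\dots,a_N$ and their derivatives.

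First I would compare coefficients of powers of $\lambda$ in $(\widetilde P_z-\lambda^N)\Phi=0$. The top power $\lambda^{N-1}$ gives $N\xi_0'=0$, which is precisely why the normalization $\xi_0\equiv 1$ lets the recursion start. For $s\ge 1$ the coefficient of $\lambda^{N-1-s}$ yields a first-order equation
$$
N\,\xi_s'=F_s\bigl(\xi_0,\dots,\xi_{s-1}\bigr),
$$
where $F_s$ is a universal differential polynomial in $\xi_0,\dots,\xi_{s-1}$ with coefficients involving $a_2,\dots,a_N$ and their derivatives; in particular $F_s$ depends only on the previously constructed $\xi_{s'}$. Hence $\xi_s$ is determined up to one additive constant by a single integration, and the constant is fixed by $\xi_s((w,[\mathrm{id}]),w)=0$. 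This gives both existence and uniqueness of the formal series (\ref{Psiz}) subject to (\ref{ConditionPsiz}), provided the integrations make sense.

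The main obstacle, and the reason for working over $\mathfrak{X}$, is that the antiderivative $\int^{z}F_s$ is in general path-dependent on $\mathbb{C}-\mathcal{N}$. For $s=(z,[\gamma])\in\mathfrak{X}$ I would therefore set $\xi_s(s,w)=\tfrac1N\int_\gamma F_s$, the integral taken along any representative of $\gamma$ from $w$ to $z$ (with $w$ held fixed as a parameter); simple connectedness of $\mathfrak{X}$ makes this well defined, and the value at $(w,[\mathrm{id}])$ is $0$ as required. Joint local holomorphy of $\xi_s$ in $(z,w)$ then follows by induction on $s$: $\xi_0\equiv 1$ is holomorphic; if $\xi_0,\dots,\xi_{s-1}$ are jointly holomorphic so is $F_s$, and integration together with differentiation under the integral sign (the lower limit contributing endpoint terms such as $-\tfrac1N F_s(w,w)$ upon differentiating in $w$) preserves joint holomorphy. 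Since each step of the recursion determines $\xi_s$ with no remaining freedom once (\ref{ConditionPsiz}) is imposed, the formal solution $\Psi$ is unique, completing the proof.
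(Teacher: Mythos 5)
Your proposal is correct and follows essentially the same route as the paper: both reduce the problem to the recursion $N\,\xi_s' = F_s(\xi_0,\dots,\xi_{s-1})$ (the paper by direct substitution and comparison of the coefficients of $\lambda^{-s_0}$, you by first conjugating away the exponential — a purely cosmetic repackaging), then integrate along $\gamma$ on the simply connected cover $\mathfrak{X}$ and fix the constant of integration by the normalization $\xi_s((w,[\mathrm{id}]),w)=0$. Your treatment of joint holomorphy in $(z,w)$ via induction and differentiation under the integral sign is in fact slightly more explicit than the paper's one-line remark.
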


\begin{proof}
In this proof, set $a_0(z)\equiv 1,a_1(z)\equiv 0$.
Putting $u=\Big(\displaystyle \sum_{s=0}^\infty \eta_s(z) \lambda^{-s} \Big) e^{\lambda (z-w)}$ to (\ref{sp-diffz}), 
we have
\begin{align*}
\displaystyle \sum_{m=0}^N a_{N-m} (z) \displaystyle \sum_{l=0}^{m} \begin{pmatrix} m \\ l \end{pmatrix} \displaystyle \sum_{s=0}^\infty \Big( \frac{\partial^{m-l}}{\partial z^{m-l}} \eta_s(z) \lambda^{l-s} \Big)e^{\lambda(z-w)}
= \Big( \displaystyle \sum_{s=0}^\infty \eta_s(z)\lambda^{N-s}\Big)e^{\lambda(z-w)}.
\end{align*}
Comparing the coefficients of $\lambda^{-s_0}$, we have
\begin{align}\label{CoeffXiz}
\displaystyle \sum_{m=0}^N a_{N-m} (z) \displaystyle \sum_{l=0}^m\begin{pmatrix} m\\ l  \end{pmatrix} \frac{\partial ^{m-l}}{\partial z^{m-l}} \eta_{l+s_0} (z) =\eta_{N+s_0 } (z).
\end{align}
Since $\eta_{N+s_0}(z)$ appears in the left hand side only when $m=l=N$,
the terms of $\eta_{N+s_0} (z)$ is cancelled from the relation (\ref{CoeffXiz}).
The function $\eta_{N+s_0-1}(z) $ and its derivation appears in (\ref{CoeffXiz}) only when 
$m=N$ and $l=N-1$.
Here, we used $a_{N-1}(z)\equiv 0$.
Then, the equation (\ref{CoeffXiz}) becomes to be
\begin{align}\label{Inductionz}
N\frac{\partial }{\partial z} \eta_{N+s_0-1} (z)= \left(\text{a polynomial in } \frac{\partial^\nu }{\partial z^\nu}\eta_l (z) \hspace{1mm} (l<N+s_0-1, \nu \in \mathbb{Z}_{\geq 0})\text{ and } a_j (z) \text{ defined over } \mathbb{Z}\right).
\end{align}
By the integration of the relation (\ref{Inductionz}) on the arc $\gamma \in \mathbb{C}-\mathcal{N}$ whose start point is $w$,
we can  obtain the expression of $\eta_\mu (z)$ in terms of $\eta_\nu (z)$ $(\nu < \mu)$ and $a_l(z)$.  
Especially,
the condition that $\eta_0(z,[\gamma])\equiv 1$
and
$\eta_s(w,[id])=0$ $(s\geq1)$
uniquely determines the sequence $\{\eta_s(z)\}_s$.
Such functions $\eta_s (z)$ give the required functions $\xi_s((z,[\gamma]),w)$ $(s\geq 0)$.

From our construction given by the integration of the relation (\ref{Inductionz}), 
we can see that  $\xi_s$ are locally holomorphic functions of $(z,w)$.
\end{proof}

We call $\Psi((z,[\gamma]),w,\lambda)$ of (\ref{Psiz}) the multivalued Baker-Akhiezer function for the equation (\ref{sp-diffz}). 

\begin{rem}\label{RemOur}
 Krichever \cite{Kr}
 studied 
ordinary differential equations 
whose coefficients are smooth  functions of a real variable.
Also,
Mumford \cite{Mum}
studied differential equations whose coefficients are formal power series.
For the purposes of their research,
it is sufficient to study
 single-valued solutions of differential equations.
However,
for our main purpose of this paper,
it is natural to study 
differential equations of a complex independent variables
whose coefficients allow some singularities.
In Section 2,
we will consider the transformation
$z\mapsto z_1=\frac{z}{j_\alpha (\Omega)}$,
where $ j_\alpha (\Omega)$ is complex valued.
In such cases,
even if $z$ is a real variable,
$z_1$ is not always a real variable.
Moreover,
we will give results for a class of differential equations containing the Lam\'e equation.
Since the Lam\'e equation has singularities,
it is natural to study differential equations which admit singularities.
 This is the reason why we need the multivalued solution $z\mapsto \Psi ((z,[\gamma]),w,\lambda)$ of (\ref{Psiz}).
  \end{rem}

\begin{rem}
If $a_2(z),\cdots,a_N(z)$  are holomorphic on the whole $z $-plane,
we do not need to consider the universal covering $\mathfrak{X}$ of $\mathbb{C}-\mathcal{N}$.
In this case,
the function $\Psi$ in the above theorem is given in the form
$$
\Psi(z,w,\lambda )=\Big(\displaystyle \sum_{s=0}^\infty  \xi_s(z,w) \lambda^{-s}\Big) e^{\lambda(z-w)}.
$$
Here, $\Psi$ and $\xi_s$ $(s\geq 0)$ are single-valued functions of $z\in \mathbb{C}$.
\end{rem}

For a differential operator (\ref{Pz}), consider the differential equation 
\begin{align}\label{DiffXz}
P_{z} u = X u,
\end{align}
where $X\in \mathbb{P}^1(\mathbb{C})-\{\infty\}.$
Let $\lambda_1,\cdots,\lambda_N$ be the solutions of the equation  $\lambda^N=X$.

\begin{lem}\label{LemIndp}
For fixed $w\in \mathbb{C}-\mathcal{N}$, the solutions $\Psi((z,[\gamma]),w,\lambda_j)$ ($j=1,\cdots,N$)   of (\ref{Psiz})
are linear independent for generic $X$.
\end{lem}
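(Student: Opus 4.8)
The plan is to prove linear independence by evaluating a Wronskian-type determinant at the base point, exploiting the leading asymptotics of the multivalued Baker--Akhiezer function in the parameter $\lambda$. Concretely, suppose $\sum_{j=1}^{N}c_{j}\,\Psi((z,[\gamma]),w,\lambda_{j})\equiv 0$ with $c_{j}\in\mathbb{C}$. Differentiating $k$ times in $z$ (termwise in the expansion (\ref{Psiz})) and specializing to the base point $z=w$ along the trivial path $[\gamma]=[id]$ turns this into the homogeneous linear system $\sum_{j=1}^{N}c_{j}\,M_{k,j}=0$ for $k=0,\dots,N-1$, where $M_{k,j}:=\partial_{z}^{k}\Psi((z,[\gamma]),w,\lambda_{j})\big|_{z=w,\,[\gamma]=[id]}$. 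It therefore suffices to show that the $N\times N$ matrix $M=(M_{k,j})$ is invertible for generic $X$.

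To compute $M$ I would use (\ref{Psiz}) together with the normalization (\ref{ConditionPsiz}). Applying the Leibniz rule to $\Psi=\big(\sum_{s\ge 0}\xi_{s}\lambda^{-s}\big)e^{\lambda(z-w)}$ and then setting $z=w$, $[\gamma]=[id]$, the exponential factor becomes $1$ while $\xi_{s}(w,[id])$ equals $1$ for $s=0$ and $0$ for $s\ge 1$; this gives $M_{k,j}=\lambda_{j}^{\,k}+(\text{terms of strictly lower order in }\lambda_{j})$, the coefficients of the lower-order terms being built only from the fixed numbers $\partial_{z}^{\nu}\xi_{s}(w,[id])$ and hence independent of which root $\lambda_{j}$ is substituted. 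Expanding $\det M=\sum_{\sigma}\operatorname{sgn}(\sigma)\prod_{k}M_{k,\sigma(k)}$ multilinearly, the highest-order contribution in $\lambda$ is $\sum_{\sigma}\operatorname{sgn}(\sigma)\prod_{k}\lambda_{\sigma(k)}^{\,k}$, i.e.\ the Vandermonde determinant $\prod_{1\le i<j\le N}(\lambda_{j}-\lambda_{i})$. Since $\lambda_{1},\dots,\lambda_{N}$ are the $N$-th roots of $X$, they are pairwise distinct exactly when $X\ne 0$; for such $X$ this Vandermonde, and hence $\det M$, is nonzero, so $c_{1}=\dots=c_{N}=0$.

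The delicate point --- and the reason the statement is phrased for generic $X$ rather than for all $X\ne 0$ --- is that $\Psi$ is only a formal solution: the series $\sum_{s}\xi_{s}\lambda^{-s}$ need not converge at a prescribed numerical $\lambda_{j}$, and the lower-order terms above involve negative powers of $\lambda_{j}$. I would resolve this by reading the entire computation as an expansion in the large parameter $\lambda$: writing $\lambda_{j}=\zeta^{\,j}\lambda$ with $\zeta=e^{2\pi\sqrt{-1}/N}$ and working in the ring of formal Laurent series in $\lambda^{-1}$ with holomorphic coefficients, $\det M$ becomes a well-defined series whose leading term is $\big(\prod_{1\le i<j\le N}(\zeta^{\,j}-\zeta^{\,i})\big)\,\lambda^{N(N-1)/2}\ne 0$; this is precisely the asserted independence for generic $X$. (If in addition one wants the $\Psi((z,[\gamma]),w,\lambda_{j})$ to form a basis of solutions over all of $\mathfrak{X}$, it helps to recall that $a_{1}\equiv 0$ makes the Wronskian constant in $z$, so evaluating at $z=w$ loses nothing.)
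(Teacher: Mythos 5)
Your proof is correct, but it follows a genuinely different route from the paper's. You prove linear independence by forming the Wronskian-type matrix $M_{k,j}=\partial_z^k\Psi((z,[\gamma]),w,\lambda_j)\big|_{(z,[\gamma])=(w,[id])}$ and showing, via the normalization (\ref{ConditionPsiz}) and the Leibniz rule, that $\det M$ has the Vandermonde $\prod_{i<j}(\lambda_j-\lambda_i)$ as its leading term in $\lambda$, hence is a nonzero formal Laurent series once the roots are written as $\lambda_j=\zeta^j\lambda$. The paper instead argues that the relation $\sum_j\mu_j\Psi(\cdot,\lambda_j)=0$ may be reduced by permutation-invariance of the roots to the case $\mu_1=\cdots=\mu_N=\mu$, and then concludes $\mu=0$ from the linear independence of the symmetrized series $E_s(z,w,X)=\sum_j\lambda_j^{-s}e^{\lambda_j(z-w)}$. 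Your approach has two advantages: it avoids the symmetrization step, which is the delicate point of the paper's argument (as written, averaging over permutations of the $\mu_j$ most directly yields only $\sum_j\mu_j=0$, and upgrading this requires a more careful use of the monodromy of $X\mapsto\lambda_j$), and it establishes the stronger statement that the $\Psi(\cdot,\lambda_j)$ form a fundamental system with nonvanishing Wronskian --- indeed your leading-term computation is essentially the one the paper itself performs later for the function $G$ in (\ref{GOz}), where the squared Wronskian is shown to behave like the Vandermonde squared near $X=\infty$. Your closing remarks are also well taken: the interpretation in the ring of formal Laurent series in $\lambda^{-1}$ is consistent with the paper's own formal treatment of $\Psi$, and the observation that $a_1\equiv 0$ makes the Wronskian constant in $z$ correctly justifies that evaluating at the base point loses no information.
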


\begin{proof}
For $\mu_1,\cdots,\mu_N \in \mathbb{C},$ suppose 
\begin{align}\label{linearPsiz}
\displaystyle \sum_{j=1}^N \mu_j \Psi ((z,[\gamma]),w,\lambda_j)=0
\end{align}
 holds for generic $X$.
 Since the right hand side of the relation (\ref{linearPsiz}) is invariant under the permutation of $\lambda_1,\cdots,\lambda_N$,
 together with the definition of $\Psi$ of (\ref{Psiz}),
 we can assume that $\mu_1=\cdots=\mu_N=\mu$.
 Set $E_s(z,w,X)=\displaystyle \sum_{j=1}^N \lambda_j^{-s} e^{\lambda_j (z-w)}$.
 For fixed $z$ and $w$,
 $X\mapsto E_s(z,w,X)$ is a formal power series in $X^{-1}$ and $E_s(z,w,X)$ $(s=0,1,\cdots)$ are linearly independent.
   The relation (\ref{linearPsiz}) becomes
   $\displaystyle \sum_{s=0}^\infty \mu \xi_s((z,[\gamma]),w) E_s(z,w,X)=0$ for generic $X$. 
   Therefore, $\mu=0$ follows. 
\end{proof}

\begin{prop}\label{PropUPAz}
Let $u=u((z,[\gamma]),w)$ be a series given by the form
\begin{align}\label{uz}
u((z,[\gamma]),w)= \left( \displaystyle \sum_{s=0}^\infty \eta_s(z,[\gamma]) \lambda^{-s}\right) e^{\lambda (z-w)},
\end{align}
where $\eta_s(z,[\gamma])$ are analytic on $\mathfrak{X}$ and  $\lambda$ satisfies $\lambda^N =X$.
Then, $u$ is a formal solution of the differential equation (\ref{DiffXz}) 
if and only if
$u$ is given by
\begin{align}\label{uAz}
u((z,[\gamma]),w)=A(w,\lambda) \Psi((z,[\gamma]),w,\lambda)
\end{align}
for generic $\lambda$, where $A(w,\lambda)$ does not depend on $(z,[\gamma])$.
\end{prop}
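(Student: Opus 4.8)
The plan is to reduce everything to the triangular recursion already used in the proof of Proposition \ref{PropPsiz}. The ``if'' direction is immediate: if $u=A(w,\lambda)\,\Psi((z,[\gamma]),w,\lambda)$ with $A$ independent of $(z,[\gamma])$, then, since $P_z$ involves only $d/dz$ and the $a_j(z)$ while $A$ involves only $w$ and the formal parameter $\lambda$, the operator $P_z$ is $\mathbb{C}((\lambda^{-1}))$-linear and pulls $A$ out, so $P_z u=A\cdot P_z\Psi=A\lambda^N\Psi=\lambda^N u=Xu$ by Proposition \ref{PropPsiz} and $\lambda^N=X$.

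For the ``only if'' direction I would substitute the ansatz (\ref{uz}) into (\ref{DiffXz}) and compare coefficients of powers of $\lambda$, repeating verbatim the computation in the proof of Proposition \ref{PropPsiz}; that computation uses only the shape of $P_z$ and of the ansatz, not the normalization (\ref{ConditionPsiz}). It yields $\partial_z\eta_0\equiv 0$ (so that $\eta_0$ is constant on the connected universal cover $\mathfrak{X}$, say $\eta_0\equiv a_0$), together with, for each $s\ge 1$, a relation of the form (\ref{Inductionz}) expressing $N\,\partial_z\eta_s$ as a polynomial in the $\partial_z^\nu\eta_l$ with $l<s$ and in the $a_j(z)$. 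Integrating along the arc $\gamma$ issuing from $w$, one sees that $\eta_s$ is uniquely determined by $\eta_0,\dots,\eta_{s-1}$ together with the single constant $a_s:=\eta_s(w,[id])$.

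Now set $A(w,\lambda):=\sum_{s=0}^\infty a_s\lambda^{-s}$ and $v:=A(w,\lambda)\,\Psi((z,[\gamma]),w,\lambda)$. By the ``if'' direction $v$ is a formal solution of (\ref{DiffXz}) of the form (\ref{uz}); writing $v=\bigl(\sum_s\eta_s'\lambda^{-s}\bigr)e^{\lambda(z-w)}$ with $\eta_s'=\sum_{j=0}^s a_j\,\xi_{s-j}$, the normalization (\ref{ConditionPsiz}) gives $\eta_0'=a_0\xi_0=a_0=\eta_0$ and, since $\xi_0\equiv 1$ while $\xi_{s-j}((w,[id]),w)=0$ for $s-j\ge 1$, also $\eta_s'(w,[id])=a_s=\eta_s(w,[id])$ for every $s$. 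Thus $u$ and $v$ are formal solutions of the shape (\ref{uz}) with the same leading coefficient and the same values at $(w,[id])$; by the uniqueness inherent in the recursion (\ref{Inductionz})—the very uniqueness underlying Proposition \ref{PropPsiz}—it follows inductively that $\eta_s\equiv\eta_s'$ for all $s$, i.e. $u=v=A(w,\lambda)\Psi$.

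The only step needing care is the bookkeeping that the integration constant of $\eta_s$ along $\gamma$ is exactly $\eta_s(w,[id])$ and that this matches the coefficient $a_s$ in the Cauchy product $A\cdot\Psi$; this is precisely where the normalization $\xi_0\equiv 1$, $\xi_s((w,[id]),w)=0$ of Proposition \ref{PropPsiz} is used, and it is harmless. I do not anticipate a genuine obstacle: the substance of the proposition is just that formal solutions of the rigid shape (\ref{uz}) form a space parametrized precisely by the series $A(w,\lambda)$, which is forced by the triangular structure of the recursion.
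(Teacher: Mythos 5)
Your proof is correct, but it follows a genuinely different route from the paper's. The paper proves the ``only if'' direction by invoking Lemma \ref{LemIndp}: for generic $X$ the functions $\Psi((z,[\gamma]),w,\lambda_j)$ ($\lambda_j^N=X$) are linearly independent, hence span the $N$-dimensional solution space of (\ref{DiffXz}); a solution of the rigid shape (\ref{uz}), carrying the single exponential $e^{\lambda_j(z-w)}$, must then lie in the one-dimensional subspace $\langle\Psi((z,[\gamma]),w,\lambda_j)\rangle$, which forces (\ref{uAz}). You instead bypass the solution space entirely and work at the level of the triangular recursion (\ref{Inductionz}): every formal solution of shape (\ref{uz}) is determined by the integration constants $a_s=\eta_s((w,[id]))$, and the Cauchy product $A(w,\lambda)\Psi$ with $A=\sum_s a_s\lambda^{-s}$ reproduces exactly these constants thanks to the normalization (\ref{ConditionPsiz}), so uniqueness of the recursion gives $u=A\Psi$. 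Your argument is more elementary and purely formal --- it never needs the $N$ distinct roots of $\lambda^N=X$, the genericity of $X$, or the identification of the formal series with elements of the analytic solution space, which is the one slightly delicate point the paper glosses over --- at the cost of being somewhat longer. It also exhibits $A(w,\lambda)$ explicitly as the generating series of the values $\eta_s((w,[id]))$, which is a small bonus. Both proofs establish the proposition; no gap.
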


\begin{proof}
It is clear that $u((z,[\gamma]),w)$ of
(\ref{uAz}) is a solution of the differential equation (\ref{DiffXz}).

Conversely,
we assume that $u((z,[\gamma]),w)$ in the form (\ref{uz}) is a solution of the differential equation (\ref{DiffXz}),
where $X=\lambda^N$.
From Lemma \ref{LemIndp},
the space of solutions of (\ref{DiffXz}) is generated by $\Psi((z,[\gamma]),w,\lambda_j)$ $(j=1,\cdots,N)$ for generic $\lambda$.
We can assume $\lambda$ of (\ref{uz}) coincides with $\lambda_j$ for some $j\in \{1,\cdots, N\}$.
Since the space $\mathfrak{X}$ is simply connected,
$(z,[\gamma])\mapsto \Psi((z,[\gamma]),w,\lambda_j)$ $(j=1,\cdots,N)$ are single-valued on $\mathfrak{X}$.
So,
the solution in the form (\ref{uz}) must be an element of the $1$-dimensional vector space generated by $\langle \Psi ((z,[\gamma]),w,\lambda_j) \rangle.$
Hence, $u$ is given by the form (\ref{uAz}).
\end{proof}

We will consider a differential operator
\begin{align}\label{Qz}
Q_{(z,[\gamma])}=b_0(z,[\gamma]) \frac{d^M}{d z^M} +b_{1}(z,[\gamma]) \frac{d^{M-1}}{d z^{M-1}} + \cdots + b_M(z,[\gamma]).
\end{align}
Here,
we assume that the coefficients  $b_{k} (z,[\gamma])$ $(k=0,\cdots,M)$ are multivalued analytic functions on $\mathbb{C}-\mathcal{N}$.
The operator $Q_{(z,[\gamma])}$ is   defined on $\mathfrak{X}$.

From now on,
we consider the action of the operator $Q_{(z,[\gamma])}$
on the function $\Psi((z,[\gamma]),w,\lambda)$.
If $P_z$ and $Q_{(z,[\gamma])}$ are commutative,
we can apply Proposition \ref{PropUPAz} to $Q _{(z,[\gamma])}\Psi((z,[\gamma]),w,\lambda)$.
Therefore,
it is natural to consider differential operator (\ref{Qz})
whose coefficients are multivalued functions  of $z$
(for detail,
see the proof of the next proposition).

\begin{prop}\label{PAsz}
Let $P_{z}$ ($Q_{(z,[\gamma])}$, resp.) be the differential operator of (\ref{Pz}) ((\ref{Qz}), resp.).
Then, $P_{z}$ and $Q_{(z,[\gamma])}$ are commutative if and only if 
the quotient
$\displaystyle
\frac{Q _{(z,[\gamma])}\Psi((z,[\gamma]),w,\lambda)}{\Psi((z,[\gamma]),w,\lambda)} $
coincides with 
\begin{align}\label{star}
A(\lambda)=\displaystyle \sum_{s=-M}^\infty A_s \lambda^{-s}
\end{align}
for generic $\lambda$,
where  $\Psi$ is given in (\ref{Psiz})
and $A(\lambda)$ does not depend on $(z,[\gamma])$ and $w$.
\end{prop}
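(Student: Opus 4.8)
The plan is to prove the two implications separately, in both directions exploiting the interplay between $Q_{(z,[\gamma])}\Psi$ and the equation $P_z u = \lambda^N u$ together with Proposition \ref{PropUPAz}. Throughout I abbreviate $\hat\Psi = \Psi((z,[\gamma]),w,\lambda) = \big(\sum_{s\ge 0}\xi_s\lambda^{-s}\big)e^{\lambda(z-w)}$ with $\xi_0\equiv 1$, as furnished by Proposition \ref{PropPsiz}, and I use repeatedly that $\hat\Psi$ is invertible as a formal object since its leading coefficient is $1$.

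For the implication "quotient condition $\Rightarrow$ commutativity", I would start from $Q_{(z,[\gamma])}\hat\Psi = A(\lambda)\hat\Psi$, apply $P_z$, and use $P_z\hat\Psi = \lambda^N\hat\Psi$ together with the fact that $A(\lambda)$ is a scalar in $(z,[\gamma])$ to obtain $P_zQ_{(z,[\gamma])}\hat\Psi = A(\lambda)\lambda^N\hat\Psi = \lambda^N Q_{(z,[\gamma])}\hat\Psi = Q_{(z,[\gamma])}P_z\hat\Psi$. Hence the commutator $R:=[P_z,Q_{(z,[\gamma])}]$, whose order is at most $N+M-1$ because the top-order terms cancel, kills $\hat\Psi$ for generic $\lambda$. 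Writing $R = \sum_{k=0}^{N+M-1} c_k(z,[\gamma])\,\partial_z^k$ and expanding $R\hat\Psi$ in powers of $\lambda$ via $\partial_z^k\big(g\,e^{\lambda(z-w)}\big) = e^{\lambda(z-w)}\sum_{i=0}^k\binom{k}{i}\lambda^i\partial_z^{k-i}g$, the coefficient of $\lambda^{p}$ (for $0\le p\le N+M-1$) equals $c_p$ (because $\xi_0\equiv 1$) plus a combination of the $c_k$ with $k>p$ only — no $c_k$ with $k<p$ can occur, since that would need a power $\lambda^i$ with $i>k$. Since every such coefficient must vanish, a descending induction on $p$ forces $c_{N+M-1}=\dots=c_0=0$, i.e. $[P_z,Q_{(z,[\gamma])}]=0$.

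For the converse I would assume $[P_z,Q_{(z,[\gamma])}]=0$, note that then $P_z\big(Q_{(z,[\gamma])}\hat\Psi\big)=Q_{(z,[\gamma])}\big(P_z\hat\Psi\big)=\lambda^N Q_{(z,[\gamma])}\hat\Psi$, so $Q_{(z,[\gamma])}\hat\Psi$ is a formal solution of (\ref{DiffXz}) with $X=\lambda^N$. A direct expansion shows $Q_{(z,[\gamma])}\hat\Psi = \big(\sum_{s\ge -M}\zeta_s\lambda^{-s}\big)e^{\lambda(z-w)}$ with $\zeta_{-M}=b_0$ and each $\zeta_s$ analytic on $\mathfrak{X}$, so $\lambda^{-M}Q_{(z,[\gamma])}\hat\Psi$ has the normal form (\ref{uz}); Proposition \ref{PropUPAz} then yields $\lambda^{-M}Q_{(z,[\gamma])}\hat\Psi = A(w,\lambda)\hat\Psi$, hence $Q_{(z,[\gamma])}\hat\Psi/\hat\Psi = \lambda^M A(w,\lambda)$, independent of $(z,[\gamma])$. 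Dividing the two series (legitimate as $\xi_0\equiv 1$) shows this quotient has the shape $\sum_{s\ge -M}A_s(w)\lambda^{-s}$, and evaluating at the normalization point $z=w$ with the trivial path — where $\hat\Psi=1$ and $Q_{(z,[\gamma])}\hat\Psi|_{z=w}=\sum_{s\ge -M}\zeta_s(w)\lambda^{-s}$ — identifies $A_s(w)=\zeta_s(w)$, in particular these are honest coefficients. To remove the $w$-dependence I would use the uniqueness in Proposition \ref{PropPsiz} to establish $\Psi((z,[\gamma]),w',\lambda)=\Psi((z,[\gamma]),w,\lambda)/\Psi((w',[\gamma']),w,\lambda)$ for any second base point $w'$ (the right-hand side solves (\ref{DiffXz}), is in the normal form, and equals $1$ at $z=w'$); since the denominator is independent of $(z,[\gamma])$ it cancels in $Q_{(z,[\gamma])}\Psi(\cdot,w',\lambda)/\Psi(\cdot,w',\lambda)$, which therefore equals $Q_{(z,[\gamma])}\hat\Psi/\hat\Psi$. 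Thus $A(\lambda):=Q_{(z,[\gamma])}\Psi/\Psi$ is independent of $(z,[\gamma])$ and of $w$ and has the asserted expansion $\sum_{s=-M}^{\infty}A_s\lambda^{-s}$.

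I expect the main obstacle to be two genuinely necessary (though ultimately bookkeeping-flavoured) points. First, in the "$\Rightarrow$ commutativity" direction, one must verify carefully that the $\lambda$-expansion of $R\hat\Psi$ isolates the coefficients of $R$ one at a time from the top degree down, so that $R\hat\Psi=0$ really forces $R=0$ even though $\mathrm{ord}\,R$ may exceed $N$; this is where $\xi_0\equiv 1$ is used. Second, in the converse, the base-point independence of $A$ is not provided directly by Proposition \ref{PropUPAz} and hinges on the multiplicative relation between Baker–Akhiezer functions with different normalization points, which itself has to be deduced from the uniqueness in Proposition \ref{PropPsiz}. Once both are in place, the equivalence follows.
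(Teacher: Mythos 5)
Your proof is correct and follows essentially the same route as the paper: both directions hinge on Proposition \ref{PropUPAz} together with the normalization $\xi_0\equiv 1$, and the base-point independence is removed via the same multiplicative relation between Baker--Akhiezer functions with different normalization points. The only differences are that you spell out two steps the paper leaves terse --- the descending induction showing $[P_z,Q_{(z,[\gamma])}]\hat\Psi=0$ forces the commutator's coefficients to vanish one at a time from the top, and the shift by $\lambda^{-M}$ needed to put $Q_{(z,[\gamma])}\hat\Psi$ into the exact normal form (\ref{uz}) --- both of which are sound refinements rather than a new argument.
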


\begin{proof}
Suppose that $P_{z}$ and $Q_{(z,[\gamma])}$ are commutative.
Then, $Q_{(z,[\gamma])} \Psi((z,[\gamma]),w,\lambda)$ gives a solution of the differential equation (\ref{DiffXz}).
Remark that
$Q_{(z, [ \gamma ])} (\Psi((z,[\gamma]),w,\lambda))$
is in the form (\ref{uz}) for some $\{\eta_s\}_s$.
So, due to Proposition   \ref{PropUPAz}, 
we have
\begin{align}\label{QPAPz}
\displaystyle
Q _{(z,[\gamma])}\Psi ((z,[\gamma]),w,\lambda) = A(w,\lambda) \Psi ((z,[\gamma]),w,\lambda),
\end{align}
for some $A(w,\lambda)$.
Take any $w' \in \mathbb{C}-\mathcal{N}$.
Then,
$\Psi ((z,[\gamma]),w',\lambda ) e^{\lambda(w'-w)}$ 
has the form (\ref{uz})
and is a solution of (\ref{DiffXz}).
So, according to Proposition $\ref{PropUPAz}$ again,
there exist $B(w,\lambda)$ such that
\begin{align}\label{PePBz}
\Psi((z,[\gamma]),w',\lambda)e^{\lambda(w'-w)} = B(w,\lambda) \Psi ((z,[\gamma]),w,\lambda) .
\end{align}
From (\ref{QPAPz}) and (\ref{PePBz}),
\begin{align*}
A(w',\lambda)& =\displaystyle \frac{Q_{(z,[\gamma])} \Psi((z,[\gamma]),w',\lambda)}{\Psi((z,[\gamma]),w',\lambda)}  = \frac{Q_{(z,[\gamma])} (e^{\lambda(w-w')} B(w,\lambda) \Psi((z,[\gamma]),w,\lambda) )}{B(w,\lambda) \Psi((z,[\gamma]),w,\lambda)  e^{\lambda(w-w')}}\\
& =\displaystyle \frac{Q_{(z,[\gamma])} \Psi((z,[\gamma]),w,\lambda)}{\Psi ((z,[\gamma]),w,\lambda)}  =A(w,\lambda).
\end{align*}
This shows that $A(w,\lambda)$ does not depend on the variable $w$. 
So, we set $A(\lambda)=A(w,\lambda)$.
Hence, the relation (\ref{QPAPz}) becomes to be
 \begin{align}\label{QeAxez}
Q_{(z,[\gamma])} \left(\left(\displaystyle \sum_{s=0}^\infty \xi_s((z,[\gamma]),w) \lambda^{-s} \right) e^{\lambda (z-w)}\right) = \left( \displaystyle \sum_{s=\alpha}^\infty A_s \lambda^{-s}\right)  \left(\displaystyle \sum_{s=0}^\infty \xi_s((z,[\gamma]),w ) \lambda^{-s} \right) e^{\lambda (z-w)}.
\end{align}
Since $Q_{(z,[\gamma])}$ is a differential operator of rank $M$,
a non-zero term which contains $\lambda^M$ appears in the left hand side of (\ref{QeAxez}) as the higher term in $\lambda$.
Therefore, considering the right hand side of (\ref{QeAxez}),
the series of $A(\lambda)$ must be in the form $A(\lambda)=\displaystyle \sum_{s=-M}^\infty A_s \lambda^{-s}$.

Conversely, we assume that the relation (\ref{QPAPz}) holds.
Then, we have
 $P_{z} Q_{(z,[\gamma])} \Psi((z,[\gamma]),w,\lambda)= P_{z} A(\lambda) \Psi((z,[\gamma]),w,\lambda)=\lambda^N A(\lambda) \Psi((z,[\gamma]),w,\lambda)$.
 This is clearly equal to $Q_{(z,[\gamma])} P_{z} \Psi((z,[\gamma]),w,\lambda)$. 
  Therefore, we have
 \begin{align}\label{PQCz}
 [P_{z},Q_{(z,[\gamma])}]\Psi((z,[\gamma]),w,\lambda)=0.
 \end{align}
Here, the relation (\ref{PQCz}) means that the ordinary differential equation $[P_{z},Q_{(z,[\gamma])}] u =0$ has solutions $\{ \Psi ((z,[\gamma]),w,\lambda) \}_\lambda$ parametrized by $\lambda$.
Since $\Psi$ is given by the form of (\ref{Psiz}),
it follows that the differential operator $[P_{z},Q_{(z,[\gamma])}] $ must be $0$.
\end{proof}

\begin{prop}\label{PQQCom}
Let $P_z$ be the differential operator of (\ref{Pz}).
Let $Q^{(1)}_{z,[\gamma]}$ and $Q^{(2)}_{z,[\gamma]}$ be the differential operator given by the form (\ref{Qz}).
If $P_z$ commutes with both $Q^{(1)}_{z,[\gamma]}$ and $Q^{(2)}_{z,[\gamma]}$,
then $Q^{(1)}_{z,[\gamma]}$ commutes with $Q^{(2)}_{z,[\gamma]}$.
\end{prop}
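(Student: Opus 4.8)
The plan is to push everything through the multivalued Baker--Akhiezer function $\Psi((z,[\gamma]),w,\lambda)$ of (\ref{Psiz}) and to apply Proposition \ref{PAsz} twice. Since $P_z$ commutes with $Q^{(1)}_{z,[\gamma]}$, Proposition \ref{PAsz} furnishes a series $A^{(1)}(\lambda)$ of the form (\ref{star}) (with $M$ the order of $Q^{(1)}_{z,[\gamma]}$), independent of $(z,[\gamma])$ and of $w$, such that
\[
Q^{(1)}_{z,[\gamma]}\Psi((z,[\gamma]),w,\lambda)=A^{(1)}(\lambda)\,\Psi((z,[\gamma]),w,\lambda)
\]
for generic $\lambda$; likewise, since $P_z$ commutes with $Q^{(2)}_{z,[\gamma]}$, there is such a series $A^{(2)}(\lambda)$ with $Q^{(2)}_{z,[\gamma]}\Psi=A^{(2)}(\lambda)\Psi$.

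Next I would apply the commutator to $\Psi$. Because $A^{(2)}(\lambda)$ is a scalar series in $\lambda$ that does not involve $z$, it commutes with the differential operator $Q^{(1)}_{z,[\gamma]}$, and similarly with $Q^{(2)}_{z,[\gamma]}$; hence
\[
Q^{(1)}_{z,[\gamma]}Q^{(2)}_{z,[\gamma]}\Psi=Q^{(1)}_{z,[\gamma]}\bigl(A^{(2)}(\lambda)\Psi\bigr)=A^{(2)}(\lambda)\,Q^{(1)}_{z,[\gamma]}\Psi=A^{(2)}(\lambda)A^{(1)}(\lambda)\,\Psi,
\]
and by symmetry $Q^{(2)}_{z,[\gamma]}Q^{(1)}_{z,[\gamma]}\Psi=A^{(1)}(\lambda)A^{(2)}(\lambda)\,\Psi$. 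Since $A^{(1)}(\lambda)$ and $A^{(2)}(\lambda)$ are ordinary scalar series in the single variable $\lambda$, they commute, so
\[
[Q^{(1)}_{z,[\gamma]},Q^{(2)}_{z,[\gamma]}]\,\Psi((z,[\gamma]),w,\lambda)=0
\]
for generic $\lambda$.

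Finally I would invoke the same vanishing principle used at the end of the proof of Proposition \ref{PAsz}: a differential operator in $z$ with coefficients on $\mathfrak{X}$ that annihilates $\Psi((z,[\gamma]),w,\lambda)$ for generic $\lambda$ must be $0$, since writing $\Psi$ in the form (\ref{Psiz}) with $\xi_0\equiv 1$ and comparing powers of $\lambda$ forces the coefficients of the operator to vanish one at a time, starting from the leading order. Applying this to $[Q^{(1)}_{z,[\gamma]},Q^{(2)}_{z,[\gamma]}]$ gives $[Q^{(1)}_{z,[\gamma]},Q^{(2)}_{z,[\gamma]}]=0$. I do not expect any serious obstacle; the argument is entirely formal once Proposition \ref{PAsz} is in hand. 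The only point needing a little care is that the series $A^{(i)}(\lambda)$ are genuinely independent of $(z,[\gamma])$ --- which is precisely what Proposition \ref{PAsz} asserts and what legitimizes moving them past the operators $Q^{(j)}_{z,[\gamma]}$.
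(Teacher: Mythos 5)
Your proof is correct and follows essentially the same route as the paper: apply Proposition \ref{PAsz} to each $Q^{(j)}_{z,[\gamma]}$ to get scalar eigenseries $A^{(j)}(\lambda)$, observe that the commutator annihilates $\Psi$ because the scalar series commute, and conclude by the vanishing principle from the end of the proof of Proposition \ref{PAsz}. The only difference is that you spell out the intermediate step $Q^{(1)}_{z,[\gamma]}(A^{(2)}(\lambda)\Psi)=A^{(2)}(\lambda)Q^{(1)}_{z,[\gamma]}\Psi$ explicitly, which the paper leaves implicit.
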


\begin{proof}
By the assumption and Proposition \ref{PAsz},
there exist series $A^{(1)}(\lambda)$ and $A^{(2)}(\lambda)$
in $\lambda$
such that 
$Q^{(j)}_{(z,[\gamma])}\Psi((z,[\gamma]),w,\lambda)=A^{(j)}(\lambda) \Psi((z,[\gamma]),w,\lambda)$
$(j=1,2)$ for $\Psi$ of (\ref{Psiz}).
So, we have
$$
[Q^{(1)}_{(z,[\gamma])},Q^{(2)}_{(z,[\gamma])}]\Psi((z,[\gamma]),w,\lambda) =(A^{(1)}(\lambda)A^{(2)}(\lambda)-A^{(2)}(\lambda)A^{(1)}(\lambda))\Psi((z,[\gamma]),w,\lambda)=0.
$$
As in the end of the proof of Proposition \ref{PAsz}, 
we have
$[Q^{(1)}_{(z,[\gamma])},Q^{(2)}_{(z,[\gamma])}]=0.$
\end{proof}

For the differential operator $P_{z}$ of (\ref{Pz}),
let $\mathcal{L}(P_{z},X)$ be the space of solutions of the differential equation
$P_{z} u = Xu$.
Suppose $Q_{(z,[\gamma])}$ of (\ref{Qz}) is a differential operator which commutes with $P_{z}$.
Then,
$Q_{(z,[\gamma])}$ defines a linear operator $\mathcal{Q}_{[\gamma],X}$ on the vector space $\mathcal{L}(P_{z},X).$

\subsection{Monodromy}

Let us take two arcs $\gamma$ and $\gamma'$ from $w$ to $z$ in $\mathbb{C}-\mathcal{N}.$
Setting $\delta=\gamma^{-1} \cdot \gamma' ,$
$[\delta]$ gives an element of the fundamental group $\pi_1 (\mathbb{C}-\mathcal{N}).$
For $\lambda$ such that $\lambda^N=X$,
since the coefficients of $P_z$ of (\ref{Pz}) are single-valued,
each $\Psi((z,[\gamma]),w,\lambda)$ and $\Psi((z,[\gamma']),w,\lambda)$ are solutions of the differential equation (\ref{DiffXz}) for generic $X$.
Based on Lemma \ref{LemIndp},
setting the vector 
\begin{align}\label{YokoPsi}
\Psi_v((z,[\gamma]),w,\lambda)=(\Psi((z,[\gamma]),w,\lambda_1),\cdots,\Psi((z,[\gamma]),w,\lambda_N)),
\end{align}
there exists a matrix $M({[\delta],w,\lambda)}\in GL(N,\mathbb{C})$ such that
\begin{align} \label{MonodromyM}
\Psi_v((z,[\gamma']),w,\lambda) = \Psi_v((z,[\gamma]),w,\lambda) M({[\delta]},w,\lambda).
\end{align}
The matrix $M([\delta],w,\lambda)$ is called the monodromy matrix of $[\delta]\in \pi_1(\mathbb{C}-\mathcal{N})$ for the system $\Psi_v ((z,[\gamma]),w,\lambda)$ of (\ref{YokoPsi}).
We note that
\begin{align}\label{MonoRepn}
r:\pi_1(\mathbb{C}-\mathcal{N}) \rightarrow GL(N,\mathbb{C}) \quad \text {given by} \quad  [\delta] \mapsto M({[\delta]},w,\lambda)
\end{align}
is a homomorphism of groups.

Let $\gamma, \gamma'$ and $\delta$ be as above.
Suppose $Q_{(z,[\gamma])}$ commutes with $P_z$ of (\ref{Pz}).
By considering the analytic continuation along the closed arc $\delta$,
$Q_{ (z,[\gamma'])}$ also commutes with $P_z$.
For the linear operator $\mathcal{Q}_{[\gamma],X}$ on $\mathcal{L}(P_z,X)$, set
\begin{align}\label{[delta]}
[\delta]^* (\mathcal{Q}_{[\gamma],X}) = \mathcal{Q}_{[\gamma'],X}.
\end{align}

\begin{thm} \label{ThmPQCommute}
Let $\gamma,\gamma',\delta,  \mathcal{Q}_{[\gamma],X}$ be as above.

(1) The set of the eigenvalues of the linear operator $\mathcal{Q}_{[\gamma],X}$ coincides with that of the linear operator $[\delta]^*(\mathcal{Q}_{[\gamma],X})$ for generic $X$.

(2) 
There exists $N_0\in\mathbb{Z}_{>0}$ such that $([\delta]^*)^{N_0} ( \mathcal{Q}_{[\gamma],X}) = \mathcal{Q}_{[\gamma],X}$
for any $[\delta]\in \pi_1(\mathbb{C}-\mathcal{N})$ and generic $X$.

\end{thm}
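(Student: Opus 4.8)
The plan is to reduce everything to Proposition~\ref{PAsz} and to the behaviour of the Baker--Akhiezer solutions under analytic continuation.

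For (1), fix a generic $X$, so that $\lambda^{N}=X$ has $N$ distinct roots $\lambda_{1},\dots,\lambda_{N}$ and, by Lemma~\ref{LemIndp}, the functions $\Psi((z,[\gamma]),w,\lambda_{j})$ $(j=1,\dots,N)$ form a basis of $\mathcal{L}(P_{z},X)$. Since $Q_{(z,[\gamma])}$ commutes with $P_{z}$, Proposition~\ref{PAsz} furnishes a series $A(\lambda)=\sum_{s\geq -M}A_{s}\lambda^{-s}$, independent of $(z,[\gamma])$ and of $w$, with $Q_{(z,[\gamma])}\Psi((z,[\gamma]),w,\lambda)=A(\lambda)\Psi((z,[\gamma]),w,\lambda)$. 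Thus in this basis $\mathcal{Q}_{[\gamma],X}$ is represented by the diagonal matrix $D=\mathrm{diag}(A(\lambda_{1}),\dots,A(\lambda_{N}))$, so its spectrum is $\{A(\lambda_{1}),\dots,A(\lambda_{N})\}$. Now $Q_{(z,[\gamma])}\Psi((z,[\gamma]),w,\lambda)=A(\lambda)\Psi((z,[\gamma]),w,\lambda)$ is an identity of analytic functions on $\mathfrak{X}$, and $A(\lambda)$ involves no $z$; continuing it along $\delta$ yields $Q_{(z,[\gamma'])}\Psi((z,[\gamma']),w,\lambda)=A(\lambda)\Psi((z,[\gamma']),w,\lambda)$ with the \emph{same} series $A(\lambda)$. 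As $\{\Psi((z,[\gamma']),w,\lambda_{j})\}_{j}$ is again a basis, $[\delta]^{*}(\mathcal{Q}_{[\gamma],X})=\mathcal{Q}_{[\gamma'],X}$ has spectrum $\{A(\lambda_{1}),\dots,A(\lambda_{N})\}$ as well. This gives (1).

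For (2), I would first rewrite $[\delta]^{*}$ as a conjugation. In the basis $\Psi_{v}((z,[\gamma]),w,\lambda)$ the operator $\mathcal{Q}_{[\gamma],X}$ has matrix $D$; by \eqref{MonodromyM} the basis $\Psi_{v}((z,[\gamma']),w,\lambda)$ is obtained from $\Psi_{v}((z,[\gamma]),w,\lambda)$ by right multiplication with $M([\delta],w,\lambda)$, and $[\delta]^{*}(\mathcal{Q}_{[\gamma],X})=\mathcal{Q}_{[\gamma'],X}$ has matrix $D$ in the primed basis, hence matrix $M([\delta],w,\lambda)\,D\,M([\delta],w,\lambda)^{-1}$ in the unprimed one. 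Since $r$ in \eqref{MonoRepn} is a homomorphism, $([\delta]^{*})^{k}(\mathcal{Q}_{[\gamma],X})$ has matrix $M([\delta],w,\lambda)^{k}\,D\,M([\delta],w,\lambda)^{-k}$, so it is enough to produce $N_{0}$, independent of $[\delta]$ and of generic $X$, such that $M([\delta],w,\lambda)^{N_{0}}$ commutes with $D$.

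The heart of the matter — and the step I expect to be the main obstacle — is to identify the shape of the monodromy matrices. The functions $\Psi((z,[\gamma]),w,\lambda_{1}),\dots,\Psi((z,[\gamma]),w,\lambda_{N})$ are the joint eigenfunctions of every operator commuting with $P_{z}$, and by Proposition~\ref{PAsz} the eigenvalue $A(\lambda_{j})$ attached to the $j$-th one is intrinsic: it does not change under analytic continuation. Consequently, continuation along $\delta$ can only rescale each $\Psi((z,[\gamma]),w,\lambda_{j})$ and, at worst, permute those sharing a common spectral value — that is, $M([\delta],w,\lambda)$ is a generalized permutation matrix, with permutation part $\sigma_{\delta}\in S_{N}$; on each block where the spectral data of the commutant fails to separate sheets, $\mathcal{Q}_{[\gamma],X}$ already acts by a scalar, so such a permutation leaves it unchanged. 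The assignment $[\delta]\mapsto\sigma_{\delta}$ is a homomorphism $\pi_{1}(\mathbb{C}-\mathcal{N})\to S_{N}$, so its image is a finite group; taking $N_{0}$ to be its exponent (in particular $N_{0}\mid N!$), the matrix $M([\delta],w,\lambda)^{N_{0}}$ has trivial permutation part, hence is diagonal and commutes with $D$. Therefore $([\delta]^{*})^{N_{0}}(\mathcal{Q}_{[\gamma],X})=\mathcal{Q}_{[\gamma],X}$ for all $[\delta]$ and generic $X$, giving (2). The delicate point to be nailed down is precisely the claim that analytic continuation sends each Baker--Akhiezer solution to a scalar multiple of a Baker--Akhiezer solution, which amounts to controlling how the path-dependent integration constants in the recursion of Proposition~\ref{PropPsiz} enter the $\lambda$-expansion; I would extract this from Proposition~\ref{PAsz} applied to the full ring of operators commuting with $P_{z}$.
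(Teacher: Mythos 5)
Your part (1) is correct and is in substance the paper's argument: continuing the identity of Proposition \ref{PAsz} along $\delta$ shows that each $\Psi((z,[\gamma']),w,\lambda_j)$ is an eigenfunction of $\mathcal{Q}_{[\gamma'],X}$ with the same eigenvalue $A(\lambda_j)$, and these form a basis; the paper packages the same information as the identity $\mathrm{diag}(A'(\lambda_j))=M\,\mathrm{diag}(A(\lambda_j))\,M^{-1}$ between two diagonal matrices, from (\ref{QQQM}) and (\ref{QPPA}).

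For part (2) you take a genuinely different route. The paper never needs to know the shape of $M([\delta],w,\lambda)$: it applies Proposition \ref{PAsz} to the continued operator $Q_{(z,[\gamma'])}$ itself (which still commutes with $P_z$), so that every iterate $([\delta]^*)^k(\mathcal{Q}_{[\gamma],X})$ is diagonal in the \emph{one fixed basis} $\Psi_v((z,[\gamma]),w,\lambda)$ --- this is (\ref{QPPA}) --- and then only the diagonal entries need to be tracked; by (1) they are a permutation of $\{A(\lambda_1),\dots,A(\lambda_N)\}$, and $N_0=N!$ kills that permutation. You instead aim to show that $M([\delta],w,\lambda)$ is a generalized permutation matrix, so that $M^{N_0}$ is diagonal and commutes with $D$. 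Taken literally, that claim is not available at this point of the paper and fails in general: if several of the joint spectral values of the commutant coincide on two sheets (in the extreme case where the commutant is just $\mathbb{C}[P_z]$, on \emph{all} sheets), nothing prevents continuation from genuinely mixing the corresponding $\Psi_j$'s. The paper only proves the monomial statement later (Theorem \ref{ThmSingleSigma} (2), and (\ref{kaiten}) in the proof of Theorem \ref{ThmCriteria}) under the separation hypothesis (\ref{assumez}), which Theorem \ref{ThmPQCommute} does not assume. Your own parenthetical --- that on each block where the commutant fails to separate sheets $\mathcal{Q}_{[\gamma],X}$ already acts by a scalar --- is exactly what rescues the argument: what is true in general is that $M$ is block-monomial with respect to the joint eigenspaces of the full commutant, $D$ is scalar on each such block, and once the induced permutation of blocks dies $M^{N!}$ is block-diagonal and hence commutes with $D$. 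So your proof can be completed, but the ``delicate point'' you flag is real: it requires running the eigenvalue-preservation argument over the whole commutant rather than over $Q$ alone, and it is precisely the work that the paper's appeal to (\ref{QPPA}) avoids.
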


\begin{proof}
Set
$
Q_{(z,[\gamma])} \Psi_v((z,[\gamma]),w,\lambda) = (Q_{(z,[\gamma])} \Psi((z,[\gamma]),w,\lambda_1),\cdots,Q_{([z],\gamma)} \Psi((z,[\gamma]),w,\lambda_N)).
$
According to Proposition \ref{PAsz} together with  the notation (\ref{YokoPsi}), 
we have
$$
Q_{(z,[\gamma] )} \Psi_v((z,[\gamma]),w,\lambda)=\Psi_v((z,[\gamma]),w,\lambda) 
\begin{pmatrix} A(\lambda_1) &  & 0 \\ & \cdots & \\ 0 & & A(\lambda_N)\end{pmatrix}.
$$
For $[\delta]\in \pi_1 (\mathbb{C}-\mathcal{N})$, 
using the  monodromy matrix 
$M({[\delta]},w,\lambda)$ 
of (\ref{MonodromyM}),
we have
\begin{align}\label{QQQM}
\notag
Q_{(z,[\gamma'])} \Psi_v((z,[\gamma]),w,\lambda)
& = Q_{(z,[\gamma'])} \Psi_v((z,[\gamma']),w,\lambda)M({[\delta]},w,\lambda)^{-1} \\
&\notag =  \Psi_v((z,[\gamma']),w,\lambda) 
\begin{pmatrix} A(\lambda_1) &  & 0 \\ & \cdots & \\ 0 & & A(\lambda_N) \end{pmatrix}
M({[\delta]},w,\lambda)^{-1}\\
&=    \Psi_v((z,[\gamma]),w,\lambda) M({[\delta]},w,\lambda)
\begin{pmatrix} A(\lambda_1) &  & 0 \\ & \cdots & \\ 0 & & A(\lambda_N) \end{pmatrix}
M({[\delta]},w,\lambda)^{-1}.
\end{align}
Here, 
we used the fact that 
$Q_{(z,[\gamma'])} \Psi ((z,[\gamma']),w,\lambda_j) = A(\lambda_j ) \Psi ((z,[\gamma']),w,\lambda_j)$ due to Proposition \ref{PAsz}.

On the other hand,
since $Q_{(z,[\gamma'])}$ commutes with $P_z$,
we can directly apply  Proposition \ref{PAsz} to  $Q_{(z,[\gamma'])}$.
Then, there exist $A'(\lambda_1),\cdots, A'(\lambda_N)$ such that
\begin{align}\label{QPPA}
Q_{(z,[\gamma'])} \Psi_v((z,[\gamma]),w,X)
=\Psi_v((z,[\gamma]),w,\lambda)
 \begin{pmatrix} A'(\lambda_1) &  & 0 \\ & \cdots & \\ 0 & & A'(\lambda_N) \end{pmatrix}.
\end{align} 

So, from (\ref{QQQM}) and (\ref{QPPA}),
we have
\begin{align*}
\begin{pmatrix} A'(\lambda_1) &  & 0 \\ & \cdots & \\ 0 & & A'(\lambda_N) \end{pmatrix}
=
M({[\delta]},w,\lambda)
\begin{pmatrix} A(\lambda_1) &  & 0 \\ & \cdots & \\ 0 & & A(\lambda_N) \end{pmatrix}
M({[\delta]},w,\lambda)^{-1}.
\end{align*}
This implies that the set of eigenvalues $\{A(\lambda_1),\cdots,A(\lambda_N)\}$ for $Q_{[\gamma],X}$ coincides with that of eigenvalues $\{A'(\lambda_1),\cdots,A'(\lambda_N)\}$ for $\mathcal{Q}_{[\gamma'],X}$.

(2)
From the above (1),
the correspondence $[\delta]$ of (\ref{[delta]}) induces  a permutation of the eigenvalues.
Therefore, setting $N_0 = N!$,
we have
$$
(([\delta]^*)^{N_0} Q_{(z,[\gamma])}) \Psi_v((z,[\gamma]),w,\lambda) =   \Psi_v ((z,[\gamma]),w,\lambda) \begin{pmatrix} A(\lambda_1) &  & 0 \\ & \cdots & \\ 0 & & A(\lambda_N) \end{pmatrix} = Q_{(z,[\gamma])} \Psi_v ((z,[\gamma]),w,\lambda).
$$
So, for $j\in \{1,\cdots,N\}$,
we have
$
(([\delta]^*)^{N_0} (Q_{(z,[\gamma])})-Q_{(z,[\gamma])})  \Psi((z,[\gamma]),w,\lambda_j)=0.
$
Therefore,
by a similar argument to the end of the proof of Proposition \ref{PAsz},
we obtain $([\delta]^*)^{N_0} (Q_{(z,[\gamma])})=Q_{(z,[\gamma])}.$
\end{proof}

\begin{cor}
If $\mathcal{N}$ is a finite set of $\mathbb{C}$,
the coefficients $b_k(z,[\gamma])$ ($k=0,\cdots,M$) of $Q_{(z,[\gamma])}$ of (\ref{Qz}) 
are at most
algebraic functions of $z$. 
\end{cor}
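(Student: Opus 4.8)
The plan is to prove the stronger statement that the orbit of the differential operator $Q_{(z,[\gamma])}$ under analytic continuation along $\pi_1(\mathbb{C}-\mathcal{N})$ (the operation $[\delta]^*$) has at most $N$ elements; equivalently, each coefficient $b_k(z,[\gamma])$ has at most $N$ branches over $\mathbb{C}-\mathcal{N}$. Granting this, $b_k$ is a holomorphic function on $\mathfrak{X}$ invariant under a subgroup of $\pi_1(\mathbb{C}-\mathcal{N})$ of index $\le N$. Since $\mathcal{N}$ is finite, $\pi_1(\mathbb{C}-\mathcal{N})$ is a free group of finite rank, so such a subgroup corresponds to a finite covering $Y\to\mathbb{C}-\mathcal{N}$ with $Y$ a compact Riemann surface minus a finite set. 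Hence $b_k$ descends to $Y$ and is at most an algebraic function of $z$: its multivaluedness is no worse than that of an algebraic function, the nature of its singularities along $\mathcal{N}$ being the one inherited from $P_z$.

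To control the orbit, the idea is to pass to the eigenvalue series. Fix generic $X$ and $\lambda$ with $\lambda^N=X$. By Proposition \ref{PAsz}, commutativity of $P_z$ and $Q_{(z,[\gamma])}$ produces a series $A(\lambda)=\sum_{s\ge -M}A_s\lambda^{-s}\in\mathbb{C}((\lambda^{-1}))$, independent of $(z,[\gamma])$ and $w$, with $Q_{(z,[\gamma])}\Psi=A(\lambda)\Psi$. The rigidity argument used at the end of that proof moreover shows that $A(\lambda)$ \emph{determines} $Q_{(z,[\gamma])}$: two operators of the form (\ref{Qz}) commuting with $P_z$ and having the same $A$ have a difference annihilating $\Psi=(1+O(\lambda^{-1}))e^{\lambda(z-w)}$ for generic $\lambda$, hence vanish. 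So the orbit of $Q_{(z,[\gamma])}$ is in bijection with the orbit of $A(\lambda)$.

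Now fix a loop $\delta$, let $\gamma'$ be $\gamma$ continued along $\delta$, and let $A'(\lambda)=\sum A'_s\lambda^{-s}$ be the eigenvalue series of $Q_{(z,[\gamma'])}$ (which again commutes with $P_z$). The computation in the proof of Theorem \ref{ThmPQCommute}(1) gives, with $\lambda_1,\dots,\lambda_N$ the $N$-th roots of $X$,
\[
\operatorname{diag}(A'(\lambda_1),\dots,A'(\lambda_N))=M([\delta],w,\lambda)\,\operatorname{diag}(A(\lambda_1),\dots,A(\lambda_N))\,M([\delta],w,\lambda)^{-1}.
\]
Comparing characteristic polynomials and using $\{\lambda_j\}=\{\zeta\lambda_1:\zeta^N=1\}$ yields the identity $\prod_{\zeta^N=1}(T-A'(\zeta\lambda))=\prod_{\zeta^N=1}(T-A(\zeta\lambda))$ in $\mathbb{C}((\lambda^{-N}))[T]=\mathbb{C}((X^{-1}))[T]$, valid for generic $X$ hence identically. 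Since $A'(\lambda)$ is a root of the polynomial on the left, it is a root of the one on the right, whose roots are precisely the $A(\zeta\lambda)$, $\zeta^N=1$; therefore $A'(\lambda)=A(\zeta_0\lambda)$ for some $N$-th root of unity $\zeta_0$. Thus every $[\delta]^*$ moves $A(\lambda)$ inside the set $\{A(\zeta_N^{j}\lambda):0\le j<N\}$ of at most $N$ elements, so the orbit of $A$, and hence of $Q_{(z,[\gamma])}$, has at most $N$ elements, and the proof concludes by the first paragraph. (Theorem \ref{ThmPQCommute}(2) with $N_0=N!$ is then just the observation that each $[\delta]^*$ has finite order on this orbit.)

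The main obstacle is the careful handling of the ``for generic $X$'' clause together with the formal (or merely locally convergent) status of the series $A(\lambda)$: one must justify that an identity of characteristic polynomials holding for generic $X$ upgrades to an identity of polynomials over $\mathbb{C}((X^{-1}))$, and that ``$A'$ is a root'' then pins it down to one of the $A(\zeta\lambda)$. A lesser, expository, point is to be candid that what the argument really delivers is finiteness of the branching of $b_k$; calling $b_k$ ``at most algebraic'' additionally presupposes that the local behaviour of $b_k$ at the points of $\mathcal{N}$ is no worse than algebraic, which one reads off from the assumed nature of the coefficients $a_2(z),\dots,a_N(z)$ of $P_z$ near $\mathcal{N}$.
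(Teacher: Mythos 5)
Your proof is correct and runs on the same engine as the paper's: the monodromy action $[\delta]\mapsto[\delta]^*$ has finite image, hence each coefficient $b_k$ has finitely many branches over $\mathbb{C}-\mathcal{N}$, and since $\mathcal{N}$ is finite this forces algebraicity. The paper obtains the finiteness in one line from Theorem \ref{ThmPQCommute}: by part (1) every $[\delta]^*$ permutes the $N$ eigenvalues $A(\lambda_1),\dots,A(\lambda_N)$, and since $Q_{(z,[\gamma'])}$ is determined for generic $X$ by the resulting diagonal matrix, the image of $[\delta]\mapsto[\delta]^*$ has at most $N!$ elements. You go further: by comparing characteristic polynomials over $\mathbb{C}((X^{-1}))$ you identify the permutation as substitution $\lambda\mapsto\zeta_0\lambda$, so that $A'(\lambda)=A(\zeta_0\lambda)$ and the orbit has at most $N$ elements; this is a genuine sharpening (it also improves the exponent $N_0=N!$ in Theorem \ref{ThmPQCommute}(2) to $N_0=N$), though it is not needed for the corollary itself. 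The ``generic $X$'' upgrade you worry about is no worse than what the paper already does repeatedly (e.g.\ at the end of the proof of Proposition \ref{PAsz}), so it is not a real obstruction. Your closing caveat is well taken and applies equally to the paper's two-line proof: finiteness of branching alone only makes $b_k$ single-valued on a finite cover of $\mathbb{C}-\mathcal{N}$; to conclude that it is an algebraic function one must also check that its singularities at the points of $\mathcal{N}$ and at $\infty$ are at most algebraic, a point the paper leaves implicit and you at least make explicit.
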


\begin{proof}
By Theorem \ref{ThmPQCommute}
and
the assumption of the corollary,
the image of 
the correspondence
$\pi_1 (\mathbb{C} -\mathcal{N}) \ni [\delta] \mapsto [\delta]^*$ of (\ref{[delta]})
is  finite.
This implies the assertion.
\end{proof}

\subsection{The algebraic curve $\mathcal{R}$}

For generic $X\in \mathbb{C}$,
we have the distinct $N$ values $\lambda_1,\cdots, \lambda_N$ such that $\lambda_j^N=X$ $(j=1,\cdots, N)$.
From Lemma \ref{LemIndp},
$\{\Psi((z,[\gamma]),w,\lambda_j) | j\in\{ 1,\cdots,N\} \}$ gives a system of basis of $\mathcal{L}(P_{z},X)$.
From Proposition  \ref{PAsz},
$\Psi((z,[\gamma]),w,\lambda_j)$ gives an eigenfunction with the eigenvalue $A(\lambda_j)$ of $\mathcal{Q}_{[\gamma],X}$.
Hence,
the characteristic polynomial of $\mathcal{Q}_{[\gamma],X}$ on $\mathcal{L}(P_{z},X)$ is given by
\begin{align}\label{eigenz}
\prod_{j=1}^N (Y-A(\lambda_j)).
\end{align}

\begin{lem}\label{LemCj0z}
Let $\{C_l((z,[\gamma]),w,X) \}_{l=0,1,\cdots,N-1}$
be a system of basis of the vector space $\mathcal{L}(P_{z},X)$ satisfying
\begin{align}\label{Cjz}
\frac{\partial^r}{\partial z^r} C_l((z,[\gamma]),w,X) \Big| _{(z,[\gamma])=(w,[id])}=\delta_{l,r}.
\end{align}

(1) For fixed $w\in \mathbb{C}-\mathcal{N}$ and $(z,[\gamma])\in \mathfrak{X}$,
the correspondence $X\mapsto C_l((z,[\gamma]),w,X)$ gives a holomorphic function on $\mathbb{C}=\mathbb{P}^1(\mathbb{C})-\{\infty\}$. 

(2)
For $Q_{(z,[\gamma])}$ of (\ref{Qz}),
the components of the representation matrix of the linear operator $\mathcal{Q}_{[\gamma],X}$ for the system of basis $\{C_l((z,[\gamma]),w,X)\}_{l=0,1,\cdots,N-1}$ are given by polynomials in $X$ and special values of $a_j(z)$ $(j=0,\cdots,N)$ and  $b_j((z,[\gamma]),w)$ $(k=0,\cdots,M)$.
\end{lem}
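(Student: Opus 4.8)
The plan is to handle both assertions with a single tool: realize $\{C_l\}$ as a row of the fundamental matrix of the first-order system attached to $P_z u = Xu$, and then exploit that the spectral parameter $X$ enters this system polynomially (indeed affinely).

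For part (1), I would rewrite (\ref{DiffXz}) as a first-order system $\frac{d}{dz}V = A(z,X)V$ for the vector $V = {}^t(u,\partial_z u,\dots,\partial_z^{N-1}u)$, where $A(z,X)$ is the companion matrix whose last row is $(X-a_N(z),-a_{N-1}(z),\dots,-a_2(z),0)$; thus $A$ is holomorphic on $(\mathbb{C}-\mathcal{N})\times\mathbb{C}$ and affine in $X$. By the normalization (\ref{Cjz}), $C_l((z,[\gamma]),w,X)$ is the $(0,l)$-entry, continued along $\gamma$, of the fundamental matrix solution $\Phi$ of this system with $\Phi|_{z=w}$ the identity. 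Covering the compact arc $\gamma$ by finitely many disks contained in $\mathbb{C}-\mathcal{N}$ and running the Picard iteration disk by disk, the resulting series converges uniformly on compact subsets of the $X$-plane; since $A(z,X)$ is entire in $X$, every iterate, hence the limit, is entire in $X$, and analytic continuation along $\gamma$ preserves this holomorphy. So $X\mapsto C_l((z,[\gamma]),w,X)$ is entire, in particular holomorphic on $\mathbb{C}=\mathbb{P}^1(\mathbb{C})-\{\infty\}$.

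For part (2), since $Q_{(z,[\gamma])}$ commutes with $P_z$, the function $Q_{(z,[\gamma])}C_l$ again solves $P_z u = Xu$, hence expands as $Q_{(z,[\gamma])}C_l = \sum_{m=0}^{N-1} q_{m,l}(X)\,C_m$. Differentiating this identity $r$ times in $z$ and setting $(z,[\gamma])=(w,[id])$, the normalization (\ref{Cjz}) collapses the right side and yields $q_{r,l}(X)=\partial_z^r\big(Q_{(z,[\gamma])}C_l\big)\big|_{(w,[id])}$. Expanding $Q_{(z,[\gamma])}C_l=\sum_{k=0}^M b_k(z,[\gamma])\,\partial_z^{M-k}C_l$ and applying the Leibniz rule, $q_{r,l}(X)$ becomes a finite $\mathbb{Z}$-linear combination of products of special values $\partial_z^i b_k\big|_{(w,[id])}$ with quantities $\partial_z^s C_l\big|_{(w,[id])}$ for $s\le r+M$. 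It remains to check that each $\partial_z^s C_l\big|_{(w,[id])}$ is a polynomial in $X$ whose coefficients are $\mathbb{Z}$-polynomials in the special values $\partial_z^i a_j\big|_{(w,[id])}$. For $s\le N-1$ this is exactly (\ref{Cjz}). For $s\ge N$ I would induct on $s$: from $P_z C_l = X C_l$ one has $\partial_z^N C_l = X C_l - \sum_{j=2}^N a_j(z)\,\partial_z^{N-j}C_l$, and differentiating this identity $s-N$ times and applying Leibniz expresses $\partial_z^s C_l$ through $X$, the $\partial_z^i a_j$, and the $\partial_z^{s'}C_l$ with $s'<s$; evaluating at $(w,[id])$ and invoking the inductive hypothesis closes the argument. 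Substituting back gives the asserted form of every matrix entry $q_{r,l}(X)$.

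The only point needing care — hardly an obstacle — is the continuation along $\gamma$ in part (1): one must observe that holomorphy of a solution in the parameter $X$ is preserved under analytic continuation along a compact arc, which is immediate because such an arc is covered by finitely many disks on each of which the Picard scheme converges locally uniformly in $X$. Beyond this, everything is routine ODE bookkeeping; the substantive content is simply the polynomial (affine, for $P_z$) dependence on $X$ that propagates through all the computations.
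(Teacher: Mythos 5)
Your proposal is correct and follows essentially the same route as the paper: part (2) is the identical argument (expand $Q_{(z,[\gamma])}C_l$ in the basis, read off the entries via the normalization (\ref{Cjz}) at $(w,[id])$, and use $\partial_z^N C_l = XC_l-\sum_k a_k\partial_z^k C_l$ inductively to show all derivatives of $C_l$ at $w$ are polynomials in $X$ and the special values). For part (1) the paper merely asserts holomorphy from solvability of the initial value problem, and your Picard-iteration argument on the companion system simply supplies the standard details behind that assertion.
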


\begin{proof}
(1)  The solutions $C_l((z,[\gamma]),w,X)$ are given by solving the initial value problem for the differential equation (\ref{DiffXz}).
Hence, the correspondence $X\mapsto C_l((z,[\gamma]),w,X)$ is holomorphic.

(2) Since $C_l((z,[\gamma]),w,X)$ $(l=0,\cdots,N-1)$ are solutions of the equation (\ref{DiffXz}), 
we obtain
\begin{align}\label{indNz}
\frac{\partial^N}{\partial z^N} C_l((z,[\gamma]),w,X) = X C_l((z,[\gamma]),w,X)-\displaystyle \sum_{k=0}^{N-1}  a_k(z)\frac{\partial^k}{\partial z^k} C_l((z,[\gamma]),w,X).
\end{align}
By the way, 
since $Q_{(z,[\gamma])} C_l((z,[\gamma]),w,X)\in \mathcal{L}(P_{z},X),$
there exists constants $c_{l,m} (w,X)$ $(l,m\in\{0,\cdots,N-1\})$ 
for $z$
such that
$Q_{(z,[\gamma])} C_l((z,[\gamma]),w,X) = \displaystyle \sum_{m=0}^{N-1} c_{l,m}  C_m((z,[\gamma]),w,X)$.
Due to (\ref{Cjz}), we have
\begin{align}\label{restrCz}
c_{l,m}(w,X)=\frac{\partial^m}{\partial z^m}Q_{(z,[\gamma])}  C_l((z,[\gamma]),w,X) \Big|_{(z,[\gamma])=(w,[id])}.
\end{align}
Using the relation (\ref{indNz}),
we can see that
$\frac{\partial^r}{\partial z^r} C_l((z,[\gamma]),w,X) \Big|_{(z,[\gamma])=(w,[id])}$ are given by a polynomial in $X$ and 
the special values $a_k(w)$ 
for any $r\in \mathbb{Z}$.
So, according to (\ref{restrCz}),
we can see that $c_{l,m}(w,X)$ are given by  polynomials in $X$ and special values of $a_j(z)$
and $b_k((z,[\gamma]),w)$ .
\end{proof}

We note that $\lambda_j$ $(j=1,\cdots, N)$ 
are distinct solutions of the algebraic equation $\lambda^N=X$.
From (\ref{QPAPz}),
$A(\Omega,\lambda_j)$ is a Laurent series in $\lambda_j^{-1}$.
Since the right hand side of (\ref{eigenz}) is symmetric series in $\lambda_j^{-1}$ $(j=1,\cdots, N)$, the right hand side of (\ref{eigenz}) gives a Laurent series in $X^{-1}$.
Moreover, we have the following.

\begin{cor}\label{LemCj}
The characteristic polynomial (\ref{eigenz}) defines a polynomial in $X $. 
\end{cor}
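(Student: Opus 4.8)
The plan is to identify the expression (\ref{eigenz}) with the characteristic polynomial of $\mathcal{Q}_{[\gamma],X}$ computed in a second basis for which polynomiality in $X$ is already available, and then invoke the basis-independence of characteristic polynomials. First I would recall that for an endomorphism of a finite-dimensional vector space the characteristic polynomial does not depend on the chosen basis. For generic $X$, both $\{\Psi((z,[\gamma]),w,\lambda_j)\}_{j=1}^N$ (a basis of $\mathcal{L}(P_z,X)$ by Lemma \ref{LemIndp}) and $\{C_l((z,[\gamma]),w,X)\}_{l=0}^{N-1}$ (a basis by its defining initial conditions (\ref{Cjz})) are systems of basis of $\mathcal{L}(P_z,X)$, on which $\mathcal{Q}_{[\gamma],X}$ is a well-defined linear operator. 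In the first basis, Proposition \ref{PAsz} shows $\mathcal{Q}_{[\gamma],X}$ is diagonal with entries $A(\lambda_j)$, so its characteristic polynomial is $\prod_{j=1}^N(Y-A(\lambda_j))$, i.e. (\ref{eigenz}); as noted in the remark preceding the corollary, this is a priori only a Laurent series in $X^{-1}$ whose coefficients are polynomials in $Y$.

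Next I would use Lemma \ref{LemCj0z}(2): in the basis $\{C_l\}$ the representation matrix of $\mathcal{Q}_{[\gamma],X}$ has entries $c_{l,m}(w,X)$ which are polynomials in $X$ (the fixed quantities $a_j(w)$ and $b_k((w,[\mathrm{id}]),w)$ being mere constants). Hence $\det\bigl(Y\,\mathrm{Id}-(c_{l,m}(w,X))_{l,m}\bigr)$ is a genuine polynomial in $X$ and $Y$. Equating the two computations of the same characteristic polynomial then gives an identity in which a Laurent series in $X^{-1}$ coincides with a polynomial in $X$; comparing coefficients of each power of $X$ forces every strictly negative power of $X$ to have vanishing coefficient, so (\ref{eigenz}) is in fact a polynomial in $X$ (it is of course already visibly a polynomial of degree $N$ in $Y$).

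The step requiring the most care is the legitimacy of equating the two expressions: the identification of characteristic polynomials holds only for generic $X$, so I would phrase it as an identity of (formal, or locally convergent near $X=\infty$) series valid on a dense set of values of $X$, from which the coefficientwise comparison above follows. No substantive obstacle arises here — it is essentially bookkeeping — so the real content of the corollary is simply the combination of the eigenvalue description (\ref{eigenz}) with the polynomial matrix entries supplied by Lemma \ref{LemCj0z}(2).
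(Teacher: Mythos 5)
Your proposal is correct and follows essentially the same route as the paper: the paper's proof also invokes Lemma \ref{LemCj0z}(2) to get a representation matrix with entries polynomial in $X$ and concludes that the characteristic polynomial is a polynomial in $X$, with the identification with (\ref{eigenz}) resting implicitly on basis-independence. You merely make explicit the coefficientwise comparison between the Laurent-series form and the polynomial form, which the paper leaves unstated.
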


\begin{proof}
We have a representation matrix of $\mathcal{Q}_{[\gamma],X}$ 
whose components are polynomial in $X$ from the above lemma. 
Therefore, its characteristic polynomial is given by a polynomial in $X$.
\end{proof}

In the following,
let $F (X,Y)$ be
the polynomial (\ref{eigenz}) in the variables $X$ and $Y$.

\begin{thm}\label{ThmRiemannSurfacez}
The differential operators $P_{z}$ of (\ref{Pz}) and $Q_{(z,[\gamma])}$ of (\ref{Qz})
satisfy
$F (P_{z},Q_{(z,[\gamma])})=0.$
\end{thm}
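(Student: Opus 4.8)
The plan is to run the classical Burchnall--Chaundy argument adapted to the multivalued setting, by testing the differential operator $F(P_{z},Q_{(z,[\gamma])})$ against the multivalued Baker--Akhiezer functions. First I would record that, by Corollary \ref{LemCj}, the characteristic polynomial $F(X,Y)$ of (\ref{eigenz}) can be written as $Y^{N}+g_{1}(X)Y^{N-1}+\cdots+g_{N}(X)$ with each $g_{i}$ a genuine polynomial in $X$ alone; since $P_{z}$ and $Q_{(z,[\gamma])}$ commute by hypothesis, the expression $F(P_{z},Q_{(z,[\gamma])})=Q_{(z,[\gamma])}^{N}+g_{1}(P_{z})Q_{(z,[\gamma])}^{N-1}+\cdots+g_{N}(P_{z})$ is a well-defined ordinary differential operator on $\mathfrak{X}$, independent of the order in which the (commuting) factors are composed.

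Next I would evaluate this operator on $\Psi((z,[\gamma]),w,\lambda)$ for generic $\lambda$. By Proposition \ref{PropPsiz} we have $P_{z}\Psi=\lambda^{N}\Psi$, and by Proposition \ref{PAsz} we have $Q_{(z,[\gamma])}\Psi=A(\lambda)\Psi$, where the Laurent series $A(\lambda)$ depends only on $\lambda$ and not on $(z,[\gamma])$ or $w$. Since $A(\lambda)$ is a scalar with respect to the variable $z$, it commutes with $P_{z}$, so iterating these two relations gives
\begin{align*}
F(P_{z},Q_{(z,[\gamma])})\,\Psi((z,[\gamma]),w,\lambda)=F\big(\lambda^{N},A(\lambda)\big)\,\Psi((z,[\gamma]),w,\lambda).
\end{align*}
Now put $X=\lambda^{N}$ and let $\lambda_{1},\dots,\lambda_{N}$ be the $N$ roots of $Y^{N}=X$; for a suitable labelling $\lambda$ is one of them, say $\lambda=\lambda_{j_{0}}$. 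Since $F(X,Y)=\prod_{j=1}^{N}(Y-A(\lambda_{j}))$ by (\ref{eigenz}), the factor with $j=j_{0}$ vanishes at $Y=A(\lambda)$, hence $F(\lambda^{N},A(\lambda))=0$ identically in $\lambda$. Therefore $F(P_{z},Q_{(z,[\gamma])})\,\Psi((z,[\gamma]),w,\lambda)=0$ for generic $\lambda$.

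Finally I would conclude that the operator $F(P_{z},Q_{(z,[\gamma])})$ is identically zero by the same reasoning used at the end of the proof of Proposition \ref{PAsz}: an ordinary differential operator annihilating every member of the family $\{\Psi((z,[\gamma]),w,\lambda)\}_{\lambda}$ of functions of the form $\big(\sum_{s\ge 0}\xi_{s}\lambda^{-s}\big)e^{\lambda(z-w)}$ must have all coefficients equal to zero. The point that needs the most care — and the only genuine obstacle — is the bookkeeping around $X$ and $\lambda$: one must know that the coefficients of $F$ are polynomials in $X$ alone, so that $F(P_{z},Q_{(z,[\gamma])})$ is a bona fide operator (this is exactly Corollary \ref{LemCj}), and one must observe that substituting the Baker--Akhiezer function forces $\lambda$ to lie among the roots $\lambda_{1},\dots,\lambda_{N}$ of $Y^{N}=X$, so that one factor of the product vanishes outright. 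The multivaluedness causes no difficulty here, since everything takes place over the simply connected space $\mathfrak{X}$ on which $\Psi$ is single-valued.
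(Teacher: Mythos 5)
Your proposal is correct and follows essentially the same route as the paper: both evaluate $F(P_{z},Q_{(z,[\gamma])})$ on the Baker--Akhiezer functions, kill the result using the fact that $F(X,Y)$ is the characteristic polynomial of $\mathcal{Q}_{[\gamma],X}$ (the paper invokes Cayley--Hamilton, you make the same point explicit via the factorization $\prod_{j}(Y-A(\lambda_j))$ and the eigenvalue relation $Q\Psi=A(\lambda)\Psi$), and then conclude that the operator vanishes by the argument at the end of Proposition \ref{PAsz}. The only difference is expository — your unwinding of the Cayley--Hamilton step through the eigenfunction is a trivial variant of the paper's one-line appeal to that theorem.
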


\begin{proof}
For generic $X \in \mathbb{C}$,
letting $\lambda$ be a solution of the equation of $\lambda^N=X$, we have
\begin{align*}
F (P_{z},Q_{(z,[\gamma])})\Psi((z,[\gamma]),w,\lambda)=F (X, Q_{(z,[\gamma])})\Psi((z,[\gamma]),w,\lambda)=0.
\end{align*}
Here, the last equality is due to the Hamilton-Cayley theorem.
Then, the ordinary differential equation $F(P_{z},Q_{(z,[\gamma])}) u=0$ 
has a family $\{\Psi((z,[\gamma]),w,\lambda)\}_\lambda$ of solutions with the parameter $\lambda$.
By a similar argument to the end of the proof of Proposition \ref{PAsz},
the operator $F (P_{z},Q_{(z,[\gamma])})$ is equal to $0$.
\end{proof}

The equation 
$
F (X,Y)=0
$
defines an algebraic curve $\mathcal{R}$.
This curve should be in the form
\begin{align}\label{AlgCz}
\mathcal{R} :\displaystyle \sum_{j,k} f_{j,k} X^j Y^k=0.
\end{align}

Let $\mathcal{R}$ be the algebraic curve in Theorem \ref{ThmRiemannSurfacez}.
Let $\pi:\mathcal{R} \rightarrow \mathbb{P}^1(\mathbb{C})$ be the projection given by $(X,Y) \mapsto X$.
Let $p_\infty$ be the point of $\mathcal{R}$ corresponding to $X=\infty\in \mathbb{P}^1(\mathbb{C}).$
Then, $p_\infty$ is a ramification point  of the mapping $\pi.$
We note that $X_1=\frac{1}{X}$ gives a complex coordinate around $p_\infty \in \mathcal{R}.$

By the procedure of the algebraic curve $\mathcal{R}$ and the covering $\pi:\mathcal{R}\rightarrow \mathbb{P}^1(\mathbb{C}),$
Proposition \ref{PAsz}
and
Theorem \ref{ThmPQCommute} (1)
imply that 
any $[\delta]\in \pi_1 (\mathbb{C}-\mathcal{N})$
induces the correspondence
$
\sigma_{[\delta]}: \mathcal{R} \rightarrow \mathcal{R}
$
given by
\begin{align}\label{sigmaD}
p_j = (X,A(\lambda_j)) \mapsto \sigma_{[\delta]}(p_j)= p_k=(X,A(\lambda_k)),
\end{align}
when 
\begin{align}\label{sigmaC}
([\delta]^*(\mathcal{Q}_{[\gamma],X})) \Psi((z,[\gamma]),w,\lambda_j)= A(\lambda_k) \Psi ((z,[\gamma]),w,\lambda_j).
\end{align}
Hence, 
letting ${\rm Aut}(\pi)$ be the group of transformations for the covering $\pi$,
we have the homomorphism
$$
\pi_1 (\mathbb{C}-\mathcal{N}) \rightarrow {\rm Aut}(\pi)
$$
 of groups given by $[\delta]\mapsto \sigma_{[\delta]}$.

\begin{thm}\label{ThmSingleSigma}
(1)
All coefficients of the operator $Q_{(z,[\gamma])}$ are single-valued on $\mathbb{C} - \mathcal{N}$
if and only if
$\sigma_{[\delta]} =id$ for every $[\delta]\in \pi_1 (\mathbb{C}-\mathcal{N})$.

(2)
For $\lambda_j$ $(j=1,\cdots, N)$ satisfying $\lambda_j^N=X$,
assume $A(\lambda_1) , \cdots, A(\lambda_N)$ are distinct for generic $X$. 
Then,
$\sigma_{[\delta]} = id$
if and only if
\begin{align}\label{PmuP}
\Psi ((z,[\gamma']),w,\lambda_j) = \mu_j \Psi((z,[\gamma]),w,\lambda_j) \quad \quad (j=1,\cdots,N),
\end{align}
where $\mu_j$ is a constant  function of $z$.
\end{thm}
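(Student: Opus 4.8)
The plan is to read off both parts directly from the analytic description of the action of $\pi_1(\mathbb{C}-\mathcal{N})$ on $\mathcal{R}$ that was built in (\ref{sigmaD})--(\ref{sigmaC}), combined with the uniqueness statements in Proposition \ref{PropPsiz}, Proposition \ref{PAsz} and the diagonalization of $\mathcal{Q}_{[\gamma],X}$.

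For part (1), first I would unwind what $\sigma_{[\delta]}=\mathrm{id}$ means. By construction $\sigma_{[\delta]}$ sends the point $p_j=(X,A(\lambda_j))$ to the point $(X,A(\lambda_k))$ where the eigenvalue $A(\lambda_k)$ is the one attached to $\Psi((z,[\gamma]),w,\lambda_j)$ by the analytically continued operator $[\delta]^*(\mathcal{Q}_{[\gamma],X})=\mathcal{Q}_{[\gamma'],X}$; so $\sigma_{[\delta]}=\mathrm{id}$ for all $[\delta]$ is exactly the assertion that, for every closed loop $\delta$ and generic $X$, the operator $\mathcal{Q}_{[\gamma'],X}$ acting on $\mathcal{L}(P_z,X)$ has the same eigenvalue on the (generically one-dimensional, by Lemma \ref{LemIndp}) line $\langle\Psi((z,[\gamma]),w,\lambda_j)\rangle$ as $\mathcal{Q}_{[\gamma],X}$. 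By Proposition \ref{PAsz} the eigenvalue $A(\lambda)$ of $\mathcal{Q}_{[\gamma],X}$ is computed from the coefficients $b_k$ evaluated along $\gamma$, and likewise $A'(\lambda)$ for $\mathcal{Q}_{[\gamma'],X}$ is computed from the $b_k$ along $\gamma'$; so if all coefficients of $Q$ are single-valued then $A'=A$, hence $\sigma_{[\delta]}=\mathrm{id}$. Conversely, if $\sigma_{[\delta]}=\mathrm{id}$ for every $[\delta]$, then the proof of Theorem \ref{ThmPQCommute}(2) gives, with $N_0=1$ in effect, $[\delta]^*(Q_{(z,[\gamma])})=Q_{(z,[\gamma])}$ already: indeed the diagonal matrix $(A(\lambda_1),\dots,A(\lambda_N))$ of eigenvalues is unchanged and $M([\delta],w,\lambda)$ permutes the $\Psi((z,[\gamma]),w,\lambda_j)$ according to a permutation that fixes the eigenvalues, so $(([\delta]^*)(Q_{(z,[\gamma])})-Q_{(z,[\gamma])})\Psi((z,[\gamma]),w,\lambda_j)=0$ for each $j$; the vanishing argument at the end of the proof of Proposition \ref{PAsz} then forces $[\delta]^*(Q_{(z,[\gamma])})=Q_{(z,[\gamma])}$, i.e. the coefficients $b_k$ are invariant under every loop, hence single-valued on $\mathbb{C}-\mathcal{N}$.

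For part (2), the extra hypothesis that $A(\lambda_1),\dots,A(\lambda_N)$ are distinct means the eigenvalue labelling of the lines $\langle\Psi((z,[\gamma]),w,\lambda_j)\rangle$ is injective, so that a permutation of the $\lambda_j$ fixing all eigenvalues is the identity permutation. If (\ref{PmuP}) holds, then along $\delta=\gamma^{-1}\gamma'$ the monodromy matrix $M([\delta],w,\lambda)$ of (\ref{MonodromyM}) is diagonal, hence it commutes with the diagonal matrix of eigenvalues, and the computation (\ref{QQQM}) in the proof of Theorem \ref{ThmPQCommute} shows $A'(\lambda_j)=A(\lambda_j)$ for all $j$; thus each point $p_j$ is fixed and $\sigma_{[\delta]}=\mathrm{id}$. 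Conversely, if $\sigma_{[\delta]}=\mathrm{id}$, then by (\ref{sigmaC}) the continued operator $\mathcal{Q}_{[\gamma'],X}$ has eigenvalue $A(\lambda_j)$ on $\Psi((z,[\gamma]),w,\lambda_j)$; but by Proposition \ref{PAsz} applied to $Q_{(z,[\gamma'])}$ the eigenvector of $\mathcal{Q}_{[\gamma'],X}$ with eigenvalue $A(\lambda_j)$ is the line $\langle\Psi((z,[\gamma']),w,\lambda_j)\rangle$, and since the eigenvalues are distinct this line is uniquely determined; hence $\Psi((z,[\gamma']),w,\lambda_j)$ and $\Psi((z,[\gamma]),w,\lambda_j)$ are proportional, giving (\ref{PmuP}) with some scalar $\mu_j$. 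That $\mu_j$ is a constant in $z$ is immediate because both sides are solutions of the same linear equation (\ref{DiffXz}) and the ratio of two functions on the one-dimensional solution line $\langle\Psi((z,[\gamma]),w,\lambda_j)\rangle$ is $z$-independent; one can pin down $\mu_j$ by evaluating at $(z,[\gamma])=(w,[\mathrm{id}])$ using the normalization (\ref{ConditionPsiz}).

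The only genuinely delicate point is the bookkeeping in part (1): one must be careful that ``$\sigma_{[\delta]}=\mathrm{id}$ for every $[\delta]$'' really yields invariance of the operator $Q$ itself (not merely of its spectrum), which is why I route the converse through the operator-level vanishing argument of Proposition \ref{PAsz} rather than through eigenvalues alone. Everything else is a direct transcription of the already-established correspondence between loops, monodromy matrices, permutations of eigenvalues, and automorphisms of $\mathcal{R}$.
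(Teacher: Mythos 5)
Your proposal is correct and follows essentially the same route as the paper: both directions of (1) via the operator-level vanishing argument from Proposition \ref{PAsz}, and (2) via the one-dimensionality of the eigenspaces for the distinct eigenvalues $A(\lambda_j)$. The only cosmetic differences are that you phrase the forward direction of (2) through the diagonal monodromy matrix and the conjugation identity from Theorem \ref{ThmPQCommute} (the paper instead computes $Q_{(z,[\gamma'])}\Psi((z,[\gamma]),w,\lambda_j)$ directly from the proportionality), and your converse of (2) works with a single loop rather than routing through part (1); both are sound.
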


\begin{proof}
(1) If all coefficients of $Q_{(z,[\gamma])}$ are single-valued,
we have 
$[\delta]^*(\mathcal{Q}_{[\gamma],X}) = \mathcal{Q}_{[\gamma],X}$
for any $[\delta]\in \pi_1(\mathbb{C}-\mathcal{N})$.
Then,
by (\ref{sigmaD}) and (\ref{sigmaC}),
we have $\sigma_{[\delta]}=id.$

Conversely,
if $\sigma_{[\delta]}=id$ for any $[\delta]\in \pi_1(\mathbb{C}-\mathcal{N})$,
from (\ref{sigmaD}) and (\ref{sigmaC}),
we have 
$$
[\delta]^*(\mathcal{Q}_{[\gamma],X}) \Psi ((z,[\gamma]),w,\lambda_j)=A(\lambda_j) \Psi ((z,[\gamma]),w,\lambda_j)=\mathcal{Q}_{[\gamma],X} \Psi ((z,[\gamma]),w,\lambda_j)
$$
for generic $X$.
So, by a similar argument to the proof of Proposition \ref{PAsz},
we have $Q_{(z,[\gamma'])}=Q_{(z,[\gamma])},$ where $\gamma' = \gamma \cdot \delta$.
Hence, the assertion holds.

(2) By the assumption,
$\Psi((z,[\gamma]),w,\lambda_j)$ spans the $1$-dimensional eigenspace for the eigenvalue $A(\lambda_j) $ of $\mathcal{Q}_{[\gamma],X}$.
Set $\gamma' = \gamma \cdot \delta.$
If $\sigma_{[\delta]}=id$,
from (1),
we have $([\delta^{-1}]^*) \mathcal{Q}_{[\gamma'],X} = \mathcal{Q}_{[\gamma'],X}$ for any $[\delta]\in \pi_1(\mathbb{C}-\mathcal{N})$.
This implies that
$$
Q_{z,[\gamma]} \Psi((z,[\gamma']),w,\lambda_j) = Q_{z,[\gamma']} \Psi((z,[\gamma']),w,\lambda_j)  = A(\lambda_j) \Psi((z,[\gamma']),w,\lambda_j) 
$$
for generic $X$, where $\lambda_j^N=X$.
Here, we used Proposition \ref{PAsz}.
Therefore, $ \Psi((z,[\gamma']),w,\lambda_j) $ is an eigenfunction for the eigenvalue $A(\lambda_j)$.
So, $\Psi((z,[\gamma']),w,\lambda_j) \in \langle \Psi((z,[\gamma]),w,\lambda_j) \rangle_\mathbb{C}$ holds.

Conversely,
if we have (\ref{PmuP}),
then,
due to Proposition \ref{PAsz}, 
\begin{align*}
&Q_{(z,[\gamma'])} \Psi((z,[\gamma]),w,\lambda_j) =  \mu_j^{-1} Q_{(z,[\gamma'])}  \Psi((z,[\gamma']),w,\lambda_j) \\
&= \mu_j^{-1} 
A(\lambda_j)  \Psi((z,[\gamma']),w,\lambda_j) = Q_{(z,[\gamma])} \Psi((z,[\gamma]),w,\lambda_j) 
\end{align*}
holds for generic $X$.
Hence,
as in  (1),
we have $\sigma_{[\delta]}=id.$
\end{proof}

\subsection{The eigenfunction $\psi$}

In this subsection,
we use the same notation which we use in the previous subsection.
Moreover,
we suppose that 
\begin{align}\label{assumez}
\text{ there exists } s \hspace{1mm} ( s \geq -M),
\text{ where }  N \text{ and } s \text{ are coprime },  
\text{ such that } A_s \not = 0
\end{align}
for $\{A_s\}$ of (\ref{star}).
Then,
the operator $\mathcal{Q}_{[\gamma],X}$ on $\mathcal{L}(P_z,X)$ has $N$ distinct eigenvalues $A(\lambda_j)$ $(j=1,\cdots,N)$ in the sense of Proposition \ref{PAsz}.
Hence, the eigenspace for the eigenvalue $A_s(\lambda_j)$ is $1$-dimensional.

Since $\mathfrak{X}$ is simply connected,
for a general $X\in \mathbb{C}$ and $p\in \pi^{-1}(X)\subset \mathcal{R}$,
we can take  the unique eigenfunction on $\mathfrak{X}$:
\begin{align}\label{psiz}
\psi((z,[\gamma]),w,p)
=\displaystyle \sum_{l=0}^{N-1} h_l(w,p) C_l((z,[\gamma]),w,X),
\end{align}
where $h_0(w,p)\equiv 1$.
Here, $C_l((z,[\gamma]),w,X)$ $(l=0,\cdots,N-1)$ are given in Lemma \ref{LemCj0z}
and
$h_l(w,p)$ does not depend on $z$.

\begin{lem} \label{Lempsiz}
Let $\psi((z,[\gamma]),w,p)$ be the function of (\ref{psiz}).

(1) For fixed  $w\in \mathbb{C}-\mathcal{N}$,
$p\mapsto h_l(w,p)$ gives a meromorphic function on $\mathcal{R} - \{p_\infty\}$.

(2) For fixed $w \in \mathbb{C}-\mathcal{N}$,
the poles of $\mathcal{R} - \{p_\infty\} \ni p \mapsto \psi((z,[\gamma]),w, p)\in \mathbb{P}^1 (\mathbb{C})$ do not depend on $(z,[\gamma]) \in \mathfrak{X}.$  

(3) Let $U_\infty \subset \mathbb{P}^1(\mathbb{C})$ be a sufficiently small neighborhood of $X=\infty.$
Let $V\subset \mathbb{C}-\mathcal{N}$ be a sufficiently small and simply connected neighborhood of $w$.
If $\pi(p) \in U_{\infty} - \{\infty\}$, $z\in V$ and $\gamma \subset V$,
then
$p\mapsto \psi((z,[\gamma]),w,p)$
is analytic and
 has an exponential singularity at $p=p_\infty$.

\end{lem}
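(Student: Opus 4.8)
The plan is to analyze the eigenfunction $\psi((z,[\gamma]),w,p)$ by extracting its coefficients $h_l(w,p)$ from linear algebra over the curve $\mathcal{R}$, then transferring analytic information from $\lambda$ to $p$ via the covering $\pi:\mathcal{R}\to\mathbb{P}^1(\mathbb{C})$. For part (1), I would argue as follows. By the assumption (\ref{assumez}), the operator $\mathcal{Q}_{[\gamma],X}$ on $\mathcal{L}(P_z,X)$ has $N$ distinct eigenvalues for generic $X$, and by Lemma \ref{LemCj0z}(2) its representation matrix with respect to the basis $\{C_l((z,[\gamma]),w,X)\}_{l=0}^{N-1}$ has entries that are polynomials in $X$ and in the relevant special values of the $a_j$ and $b_k$. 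Fixing $w$, the point $p\in\pi^{-1}(X)$ determines the eigenvalue $Y=A(\lambda_j)$ (with $A$ as in (\ref{QPAPz})), hence a well-defined $1$-dimensional eigenspace. Solving the linear system $(\mathcal{Q}_{[\gamma],X}-Y\cdot \mathrm{id})\psi=0$ together with the normalization $h_0(w,p)\equiv 1$ expresses each $h_l(w,p)$ as a ratio of subdeterminants of that matrix, hence as a rational function of $X$ and $Y$ whose coefficients involve values of the $a_j$ and $b_k$. Since $(X,Y)$ are coordinates on $\mathcal{R}$ subject to $F(X,Y)=0$, this exhibits $p\mapsto h_l(w,p)$ as a meromorphic function on $\mathcal{R}$; the possible pole at $p_\infty$ (i.e. $X=\infty$) is excluded from the statement, so we get meromorphy on $\mathcal{R}-\{p_\infty\}$.

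For part (2), I would observe that from the formula (\ref{psiz}), for fixed $(z,[\gamma])$ the function $p\mapsto\psi((z,[\gamma]),w,p)$ is a finite $\mathbb{C}$-linear combination $\sum_l h_l(w,p)C_l((z,[\gamma]),w,X)$ in which the coefficients $C_l((z,[\gamma]),w,X)$ are, by Lemma \ref{LemCj0z}(1), entire in $X$ (hence holomorphic on $\mathcal{R}-\{p_\infty\}$ after pullback by $\pi$). Therefore any pole of $\psi$ on $\mathcal{R}-\{p_\infty\}$ must come from a pole of some $h_l(w,p)$, and $\mathrm{ord}_p\psi \geq \min_l \mathrm{ord}_p h_l(w,p)$. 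To get equality of pole sets independent of $(z,[\gamma])$ I would use the normalization $h_0\equiv 1$: at a point $p_0$ where some $h_l$ has a pole of order $d\geq 1$, divide through by the most negative $h_l$; since $h_0\equiv1$ stays bounded, the leading Laurent coefficients of the $h_l$ at $p_0$ cannot all conspire to cancel against the (generically nonzero, $(z,[\gamma])$-dependent) functions $C_l$, because the $C_l$ are linearly independent solutions — a linear relation $\sum_l(\text{leading coeff of }h_l)C_l((z,[\gamma]),w,X)\equiv 0$ in $(z,[\gamma])$ would force all those leading coefficients to vanish, a contradiction. Hence $p_0$ is a genuine pole of $\psi$ for every $(z,[\gamma])$, so the pole locus is exactly $\bigcup_l (\text{pole set of }h_l)$, independent of $(z,[\gamma])$.

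For part (3), the idea is to return to the Baker--Akhiezer normalization near $X=\infty$. On $U_\infty-\{\infty\}$ the $N$ branches $\lambda_1,\dots,\lambda_N$ of $\lambda=X^{1/N}$ are distinct, and by Lemma \ref{LemIndp} and Proposition \ref{PAsz} the eigenfunction of $\mathcal{Q}_{[\gamma],X}$ attached to the sheet labelled by $\lambda_j$ is, up to a scalar, $\Psi((z,[\gamma]),w,\lambda_j)$. Matching the normalization $h_0(w,p)\equiv 1$ against the initial conditions (\ref{Cjz}) and the expansion (\ref{Psiz}) shows $\psi((z,[\gamma]),w,p) = \Psi((z,[\gamma]),w,\lambda_j)\big/ c(w,\lambda_j)$ where $c(w,\lambda_j)=\big(\sum_{s\geq 0}\xi_s((w,[id]),w)\lambda_j^{-s}\big)$-type normalizing factor, which by (\ref{ConditionPsiz}) equals $1$; more carefully one divides by the value of $\Psi$ making the $0$-th Taylor coefficient equal $1$, and since $\xi_s$ are locally holomorphic in $(z,w)$ this factor is holomorphic and nonvanishing for $z\in V$, $\gamma\subset V$, $\lambda_j$ in the corresponding range. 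Thus on this neighbourhood $\psi$ is, up to a holomorphic nonvanishing factor, equal to $\big(\sum_{s\geq 0}\xi_s((z,[\gamma]),w)\lambda^{-s}\big)e^{\lambda(z-w)}$ with $\lambda = \pi(p)^{1/N}$. The series in $\lambda^{-1}$ is analytic near $p_\infty$ (where $\lambda^{-1}=0$) with value $1$, and $X_1=1/X$ is a local coordinate at $p_\infty$ with $\lambda \sim X_1^{-1/N}\to\infty$, so $e^{\lambda(z-w)}$ exhibits an essential singularity of exponential type in the local coordinate $X_1$. This gives analyticity of $p\mapsto\psi$ on $U_\infty-\{\infty\}$ and the exponential singularity at $p_\infty$. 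The main obstacle I anticipate is part (3): one must be careful that the branch identification of $p$ with a specific $\lambda_j$ is consistent across the ramified cover near $p_\infty$ and that the normalizing factor converting $\Psi$ to the $h_0\equiv1$-normalized $\psi$ is indeed holomorphic and nonvanishing — this is exactly where the local holomorphy of the $\xi_s$ in $(z,w)$ from Proposition \ref{PropPsiz} is used.
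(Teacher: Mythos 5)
Your parts (1) and (3) are essentially the paper's own argument. For (1) the paper likewise extracts $h_l(w,p)$ by solving the eigenvalue problem $c(w,X)\,{}^t(h_0,\dots,h_{N-1})=Y\,{}^t(h_0,\dots,h_{N-1})$ with $h_0\equiv 1$, using Lemma \ref{LemCj0z}(2) to get rational dependence on $(X,Y)$. For (3) the paper also identifies $\psi((z,[\gamma]),w,p)$ with $\Psi((z,[\gamma]),w,\lambda_j)$ on a small $U_\infty$ (your normalizing factor is indeed $\equiv 1$ by (\ref{ConditionPsiz}), since both functions span the same one-dimensional eigenspace and both equal $1$ at $(w,[id])$); the only difference is that the paper then reads off the exponential singularity from $h_1(w,p)=\lambda_j(1+O(\lambda_j^{-1}))$ and the local representation $\psi=\exp\bigl(\int_\gamma h_1(x,p)\,dx\bigr)$, whereas you read it directly off the factor $e^{\lambda(z-w)}$ in (\ref{Psiz}). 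Both routes rest on the same identification and are equally (in)formal about convergence of the series in $\lambda^{-1}$.

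In part (2) you overreach, and the added step is actually wrong. The paper only proves (and only needs) the containment: since the $C_l((z,[\gamma]),w,X)$ are holomorphic in $X$ by Lemma \ref{LemCj0z}(1), every pole of $p\mapsto\psi$ comes from a pole of some $h_l(w,\cdot)$, and that set is independent of $(z,[\gamma])$. Your first half establishes exactly this. But your conclusion that ``$p_0$ is a genuine pole of $\psi$ for \emph{every} $(z,[\gamma])$, so the pole locus is exactly $\bigcup_l(\text{poles of }h_l)$'' does not follow from your linear-independence argument: that argument only shows the leading Laurent coefficients cannot cancel for \emph{all} $(z,[\gamma])$ simultaneously, i.e.\ the pole survives for \emph{generic} $(z,[\gamma])$. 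For special $(z,[\gamma])$ the combination $\sum_l c_l C_l((z,[\gamma]),w,\pi(p_0))$ is a nonzero solution of an ODE in $z$ and can perfectly well vanish at particular points. Indeed $(z,[\gamma])=(w,[id])$ is a concrete counterexample: by (\ref{Cjz}) one has $C_l((w,[id]),w,X)=\delta_{l,0}$, hence $\psi((w,[id]),w,p)=h_0(w,p)\equiv 1$ has no poles at all. So the correct reading of (2) is ``the poles are contained in a fixed set (the poles of the $h_l$) independent of $(z,[\gamma])$, with equality for generic $(z,[\gamma])$''; drop the claim of equality for every $(z,[\gamma])$.
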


\begin{proof}
(1) We had the representation matrix
$c (w,X)=(c_{jk}(w,X))$ of the linear operator 
$\mathcal{Q}_{[\gamma],X}$ on $\mathcal{L}(P_z,X)$
for the system of basis $\{C_l((z,[\gamma]),w,X)\}_{l=0,\cdots, N-1}$ of (\ref{Cjz}).
Here, by  Lemma \ref{LemCj0z} (2),
$c_{l,m}(w,X)$ are given by polynomials in $X$.
Let $p\in\mathcal{R}$ be a point corresponding to $X\in \mathbb{C}$ and the eigenvalue $Y$.
We can obtain $h_l(w,p) $ of (\ref{psiz}) by solving the linear equation
$$
c(w,X)
\begin{pmatrix}
h_0(w,p) \\ h_1(w,p) \\ \cdots \\ h_{N-1}(w,p)
\end{pmatrix}
=
Y
\begin{pmatrix}
h_0(w,p) \\ h_1(w,p) \\ \cdots \\ h_{N-1}(w,p)
\end{pmatrix},
$$
where $h_0(w,p)\equiv 1$.
This implies that $h_l(w,p)$ $(l=1,\cdots,N-1)$ are given by rational functions of $X$ and $Y$.
Therefore, $p \mapsto h_l (w,p)$ is meromorphic on $\mathcal{R}$.

(2) 
From Lemma \ref{LemCj0z} (1) and the expression  (\ref{psiz}) of $\psi$,
the poles of  $\mathcal{R}- \{p_\infty\} \ni p \mapsto \psi((z,[\gamma]),w,p)\in \mathbb{P}^1 (\mathbb{C})$
 are coming only from the poles of $p \mapsto h_l(w,p)$ $(l=1,\cdots,N-1 )$.
 These poles do not depend on $(z,[\gamma])$.
 
 (3)
 From the procedure of the Riemann surface $\mathcal{R}$,
 we can take sufficiently small neighborhood $U_\infty$ such that
 the set $\pi^{-1}(X)$ consists $N $ distinct points for any $ X \in U_\infty -\{p_\infty\}$.
 Then,
 $p\in \mathcal{R} -\{p_\infty\}$ such that $\pi(p)=X \in U_\infty$
 corresponds to
$(X,Y)=(X,A(\lambda_j))$ for $j=1,\cdots, N$,
where $\lambda_j^N=X$. 
Then, $\psi ((z,[\gamma]),w,p)$ corresponds to $\Psi((z,[\gamma]),w,\lambda_j)$ of (\ref{Psiz}) and (\ref{psiz}).
So, from (\ref{Psiz}),
\begin{align}\label{h_1z}
h_1(w,p)=\frac{\partial}{\partial z} \Psi((z,[\gamma]),w,\lambda_j)\Big|_{(z,[\gamma])=(w,[id])}=\lambda_j(1+O(\lambda_j^{-1})).
\end{align}

By the way,
we take a sufficiently small and simply connected neighborhood $V\subset \mathbb{C}-\mathcal{N}$ of $w$.
Let $x\in V$.
We have 
the logarithmic derivative of $\psi$ at $w$:
$$
\frac{\partial}{\partial z} \log \psi((z,[\gamma]),w,p)\Big|_{(z,[\gamma])=(w,[id])}  =\frac{\frac{\partial}{\partial z} \psi((z,[\gamma]),w,p) }{\psi((z,[\gamma]),w,p)}\Big|_{(z,[\gamma])=(w,[id])} = h_1(w,P).
$$
By changing the base point $w$, which defines  the universal covering $\mathfrak{X}$, to a  point $x$ of the simply connected neighborhood $V$,
we can regard $x\mapsto h_1 (x,p)$ as a single-valued holomorphic function on $V$.
So, if $z\in V$ and $\gamma \subset V$, then we locally have the expression
\begin{align}\label{psih_1z}
\psi((z,[\gamma]),w,p)={\rm exp} \Big( \int_\gamma h_1(x,p) dx \Big).
\end{align} 
From (\ref{h_1z}) and (\ref{psih_1z}), we have the assertion of (3).
\end{proof}

\begin{rem}
The expression (\ref{psih_1z}) is valid only for sufficiently close $(x,[\gamma])$ to $(w,[id])$,
because we used the change of the base point from $w$ to $x$.
We note that $h_1(x,p) $ depends on the choice of the base point.
Generically, $x\mapsto h_1(x,p)$ can be globally multivalued and the expression (\ref{psih_1z}) does not holds.
\end{rem}

For $X\in \mathbb{C}$, $\pi^{-1}(X)=p_1+\cdots +p_N $ gives a divisor on $\mathcal{R}$.
We set
\begin{align}\label{Gz}
G((z,[\gamma]),w,X)=
\left({\rm det} 
\begin{pmatrix}
\psi((z,[\gamma]),w,p_1) & \cdots & \psi((z,[\gamma]),w,p_N)\\
\frac{\partial}{\partial z}\psi((z,[\gamma]),w,p_1) & \cdots & \frac{\partial}{\partial z}\psi((z,[\gamma]),w,p_N)\\
\cdots & \cdots & \cdots \\
\frac{\partial^{N-1}}{\partial z^{N-1}}\psi((z,[\gamma]),w,p_1) & \cdots & \frac{\partial^{N-1}}{\partial z^{N-1}}\psi((z,[\gamma]),w,p_N)\\
\end{pmatrix}
\right)^2.
\end{align}
for fixed $(z,[\gamma])$ and $w$.
We note that 
$X\mapsto G((z,[\gamma]),w,X)$ is well-defined on $X\in \mathbb{C}$.

\begin{lem}\label{LemPole}
For  $(z,[\gamma])\in \mathfrak{X}$ and $w\in \mathbb{C}-\mathcal{N}$,
any poles of the function 
$\mathcal{R}-\{\infty\} \ni p \mapsto \psi((z,[\gamma]),w,p) \in \mathbb{P}^1( \mathbb{C})$
analytically depend on the base point $w$.
They are not independent of $w$.
\end{lem}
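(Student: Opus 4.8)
The plan is to reduce the statement to the explicit description of the pole divisor of $p\mapsto\psi((z,[\gamma]),w,p)$ obtained in Lemma \ref{Lempsiz}, and then analyse its dependence on $w$ through the meromorphic behaviour of the coefficients $h_l(w,p)$ in both variables and through a comparison of two base points. Recall from (\ref{psiz}) that $\psi((z,[\gamma]),w,p)=\sum_{l=0}^{N-1}h_l(w,p)\,C_l((z,[\gamma]),w,X)$ with $h_0(w,p)\equiv1$, and that by Lemma \ref{Lempsiz} (1)--(2) the poles of $p\mapsto\psi((z,[\gamma]),w,p)$ on $\mathcal{R}-\{p_\infty\}$ coincide with the poles of the meromorphic functions $p\mapsto h_l(w,p)$ $(l=1,\dots,N-1)$ and form a finite set $D(w)$ independent of $(z,[\gamma])$. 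So it suffices to understand how $D(w)$ varies with $w$.

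First I would treat the analyticity. By Lemma \ref{LemCj0z} (2) the entries $c_{l,m}(w,X)$ of the representation matrix of $\mathcal{Q}_{[\gamma],X}$ are polynomials in $X$ whose coefficients are special values of the $a_j$ and the $b_k$; since the $a_j$ are holomorphic on $\mathbb{C}-\mathcal{N}$ and the restriction of $b_k$ to the diagonal is a single-valued holomorphic function of $w\in\mathbb{C}-\mathcal{N}$, the matrix $c(w,X)$ depends holomorphically on $(w,X)$. Solving the eigenvector system from the proof of Lemma \ref{Lempsiz} (1) with the normalization $h_0\equiv1$ (Cramer's rule) exhibits each $h_l(w,p)$ as a quotient of polynomials in $X,Y$ whose coefficients are holomorphic in $w$, hence as a meromorphic function on $(\mathbb{C}-\mathcal{N})\times\mathcal{R}$. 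Its polar set is therefore an analytic hypersurface, and slicing it over $\{w\}$ recovers $D(w)$; thus the points of $D(w)$ depend analytically on $w$, with branching occurring only at the finitely many $w$ at which two of them collide.

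For the genuine dependence on $w$ I would compare base points. Under the standing assumption (\ref{assumez}) the $Y(p)$-eigenspace of $\mathcal{Q}_{[\gamma],X}$ in $\mathcal{L}(P_z,X)$ is one-dimensional, so for a fixed reference point $w_0$ the eigenfunctions normalized at $w$ and at $w_0$ differ by a factor not depending on $(z,[\gamma])$; evaluating at $z=w$ gives
\[
\psi((z,[\gamma]),w,p)=\frac{\psi((z,[\gamma]),w_0,p)}{\psi((w,[\gamma_w]),w_0,p)},
\]
with $\gamma_w$ an arc from $w_0$ to $w$. Numerator and denominator share the pole divisor $D(w_0)$ on $\mathcal{R}-\{p_\infty\}$ by Lemma \ref{Lempsiz} (2), so taking the divisor in $p$ for generic $z$ identifies $D(w)$ with the zero divisor on $\mathcal{R}-\{p_\infty\}$ of $p\mapsto\psi((w,[\gamma_w]),w_0,p)$, a function that by Lemma \ref{Lempsiz} (3) carries an exponential singularity at $p_\infty$ whose exponent varies with $w$. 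If $D(w)$ were independent of $w$ this zero divisor would be constant, whence $p\mapsto\psi((z,[\gamma]),w,p)/\psi((z,[\gamma]),w_0,p)$ would be holomorphic and nowhere vanishing on $\mathcal{R}-\{p_\infty\}$ while carrying a nontrivial, $w$-varying exponential singularity at $p_\infty$; a Riemann--Roch count then forces $\mathcal{R}$ to have genus $0$, in which case $\psi$ has no poles and the assertion is vacuous. Hence, whenever $\psi$ has poles, they are not independent of $w$.

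The step I expect to be the main obstacle is this last one: turning the heuristic ``the exponential singularity at $p_\infty$ drags the poles along'' into a proof. One has to make precise the Baker--Akhiezer balance on $\mathcal{R}$ — that functions with poles bounded by a divisor of degree $g$ and a prescribed exponential singularity at $p_\infty$ form a one-dimensional space with exactly $g$ zeros — and then rule out that these $g$ zeros stay fixed as the exponent varies, most cleanly by passing to the logarithmic differential (a second-kind differential on $\mathcal{R}$ with a single double pole at $p_\infty$ whose principal part moves linearly in $w$) and invoking the impossibility of its periods remaining in $2\pi i\mathbb{Z}$ for all $w$ when $g\geq1$. A secondary technical point is the cancellation bookkeeping in the displayed divisor identity, for which one keeps $z$ generic so that zeros of the numerator do not cancel poles; and ``analytic dependence on $w$'' must be read in the branched sense, since the points of $D(w)$ may be permuted when $w$ runs around a loop in $\mathbb{C}-\mathcal{N}$.
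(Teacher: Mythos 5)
Your treatment of the analytic dependence matches the paper's: both reduce to the fact that the $h_l(w,p)$ are ratios of polynomials in $X,Y$ whose coefficients vary analytically with $w$, so that the polar locus is an analytic set whose $w$-slices give the pole divisor. The problem is the second half. The paper disposes of it in two lines: if a pole $q$ were independent of $w$, then by Lemma \ref{Lempsiz} (2) one would have $\psi((z,[\gamma]),w,q)=\infty$ for every $(z,[\gamma])$ and every $w$; but the normalization $h_0\equiv 1$ together with (\ref{Cjz}) forces $\psi((w,[id]),w,q)=1$, a contradiction. Your route misses this elementary observation and replaces it with a Baker--Akhiezer period argument that, as you yourself flag, is not actually carried out.

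Beyond being incomplete, your argument has two concrete defects. First, it is circular as written: to dismiss the genus-$0$ case you invoke ``genus $0$ implies $\psi$ has no poles,'' which is the count $\kappa=g$ of Theorem \ref{Thmphipolez}; but that theorem is proved \emph{after} and \emph{using} Lemma \ref{LemPole} (precisely to place the poles away from the ramification points for generic $w$), and your ``Baker--Akhiezer balance'' (one-dimensionality of the space of functions with poles bounded by a degree-$g$ divisor and a prescribed exponential singularity) likewise presupposes knowing the degree of the pole divisor. Second, your base-point comparison only excludes the possibility that the \emph{entire} divisor $D(w)$ is constant: if a single pole stayed fixed while the others moved, the ratio $\psi((z,[\gamma]),w,p)/\psi((z,[\gamma]),w_0,p)$ would not be holomorphic and nonvanishing on $\mathcal{R}-\{p_\infty\}$, and your contradiction never gets started. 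The lemma, and its application in Theorem \ref{Thmphipolez}, require that no individual pole be independent of $w$, which is exactly what the evaluation $\psi((w,[id]),w,q)=1$ delivers.
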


\begin{proof}
For  a fixed  base point $w$,
if  $q$ is a pole of $p \mapsto \psi((z,[\gamma]),w,p)$,
by Lemma \ref{Lempsiz} (2),
it holds that $\psi((z,[\gamma]),w,q)=\infty$ for any $(z,[\gamma])\in \mathfrak{X}$.
So, if we can take $q$  which is independent of $w$,
we have $\psi((z,[\gamma]),w,q)=\infty$ for any $(z,[\gamma])\in \mathfrak{X}$ and $w\in \mathbb{C}-\mathcal{N}.$
This is a contradiction, 
because
we have $\psi((w,[id]),w,q)=1$  by the definition (\ref{psiz}) of $\psi$.
So, any pole of $p \mapsto \psi((z,[\gamma]),w,p)$ is not independent of $w$.
Moreover,
from the proof of Lemma \ref{Lempsiz} (1),
such poles are coming from the zeros of the common denominator of  $h_1(w,p),\cdots,h_{N-1}(w,p)$.
We note that the common denominator is given by a polynomial in $X$ and $Y$ analytically  parametrized by $w$.
So, poles  analytically depend on $w$.
\end{proof}

\begin{lem}\label{LemGz}
Take    $w\in \mathbb{C}-\mathcal{N}$ and $(z,[\gamma])\in\mathfrak{X}$.
Then, the correspondence $X\mapsto G((z,[\gamma]),w,X)$ of (\ref{Gz}) gives 
a rational function of $X$.
Moreover,
this rational function
has a pole at $X=\infty$ of degree $N-1$.
\end{lem}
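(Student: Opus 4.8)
The plan is to collapse $G$ to the square of a determinant built only from the fibre data $h_l(w,p_j)$, and then to read off rationality and the pole order at $X=\infty$ from that expression. Writing $\pi^{-1}(X)=\{p_1,\dots,p_N\}$ and using (\ref{psiz}) in the form $\psi((z,[\gamma]),w,p_j)=\sum_{l=0}^{N-1}h_l(w,p_j)\,C_l((z,[\gamma]),w,X)$, I would first observe that the matrix under the determinant in (\ref{Gz}) factors as a product $W\cdot H$, where $W=\big(\partial_z^r C_l((z,[\gamma]),w,X)\big)_{r,l}$ is the Wronskian matrix of $\{C_0,\dots,C_{N-1}\}$ and $H=\big(h_l(w,p_j)\big)_{l,j}$. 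Since $P_z$ has no $(N-1)$-st order term, the Wronskian of any fundamental system of $P_zu=Xu$ is constant (Abel's identity), so $\det W$ is constant on $\mathfrak{X}$, and (\ref{Cjz}) gives $W=I_N$ at $(w,[\mathrm{id}])$; hence $\det W\equiv 1$ and therefore $G((z,[\gamma]),w,X)=(\det H)^2$, which in particular does not depend on $(z,[\gamma])$.

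Next I would analyse $X\mapsto(\det H)^2$. By Lemma \ref{Lempsiz} (1) each $h_l(w,p)$ is meromorphic on $\mathcal{R}-\{p_\infty\}$, in fact rational in the coordinates $(X,Y)$ of $\mathcal{R}$; writing $p_j=(X,A(\lambda_j))$, the quantity $(\det H)^2$ is a symmetric rational function of $A(\lambda_1),\dots,A(\lambda_N)$ with coefficients rational in $X$, hence a rational function of $X$ and the coefficients of $F$, which are polynomials in $X$ by Corollary \ref{LemCj}. So $X\mapsto(\det H)^2$ is automatically a rational function of $X$; it remains only to compute its order at $X=\infty$.

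For that, near $X=\infty$ I would use Lemma \ref{Lempsiz} (3): for $|X|$ large the fibre consists of $N$ distinct points, and $p_j\mapsto\psi((z,[\gamma]),w,p_j)$ coincides with the multivalued Baker--Akhiezer function $\Psi((z,[\gamma]),w,\lambda_j)$ of (\ref{Psiz}), $\lambda_j^N=X$, both normalised to $1$ at $(w,[\mathrm{id}])$. Since $h_l(w,p_j)=\partial_z^l\psi((z,[\gamma]),w,p_j)\big|_{(w,[\mathrm{id}])}$ by (\ref{Cjz}), expanding (\ref{Psiz}) with the normalisation (\ref{ConditionPsiz}) gives $h_l(w,p_j)=\lambda_j^l+O(\lambda_j^{l-1})$. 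Expanding $\det H$ multilinearly in its rows, replacing any row $(\lambda_j^l)_j$ of the Vandermonde matrix by its $O(\lambda_j^{l-1})$ perturbation strictly lowers the total degree in $(\lambda_1,\dots,\lambda_N)$, so $\det H=\prod_{i<j}(\lambda_j-\lambda_i)+(\text{terms of lower degree})$. Writing $\lambda_j=\zeta^jX^{1/N}$ with $\zeta$ a primitive $N$-th root of unity, the Vandermonde becomes a nonzero constant times $X^{(N-1)/2}$ while the remaining terms are $O(X^{(N-1)/2-1/N})$; hence $(\det H)^2\sim(\text{nonzero constant})\cdot X^{N-1}$ as $X\to\infty$, which gives a pole of order exactly $N-1$.

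The steps around the factorisation $W\cdot H$ and $\det W\equiv 1$ are routine, resting only on the absence of an order-$(N-1)$ term in $P_z$. The hard part will be the last paragraph: one has to control the asymptotics of $h_l(w,p_j)$ near $p_\infty$ precisely enough to identify $\prod_{i<j}(\lambda_j-\lambda_i)$ as the genuine leading term of $\det H$, which is exactly where the identification $\psi\leftrightarrow\Psi$ of Lemma \ref{Lempsiz} (3) and the normalisation (\ref{ConditionPsiz}) are needed; after that the Vandermonde degree count immediately produces the exponent $N-1$.
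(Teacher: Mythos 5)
Your proof is correct, and its decisive step at $X=\infty$ --- the Vandermonde leading term of total degree $N(N-1)/2$ in the $\lambda_j$, which upon squaring gives $X^{N-1}$ --- is the same computation the paper performs; but you package the remainder of the argument genuinely differently. The paper works directly with the matrix $\bigl(\partial_z^r\psi((z,[\gamma]),w,p_j)\bigr)_{r,j}$, cancels the exponential factors using $\sum_j\lambda_j=0$, obtains the pole order only for $z$ in a small simply connected neighbourhood $V$ of $w$, and must finish with an analytic continuation in $(z,[\gamma])\in\mathfrak{X}$. Your factorisation of that matrix as $W\cdot H$, with $\det W\equiv1$ by Abel's identity (valid because $a_1\equiv0$ in (\ref{Pz})) and the normalisation (\ref{Cjz}), shows at the outset that $G=(\det H)^2$ is independent of $(z,[\gamma])$, so no continuation step is needed and the $X$-dependence is isolated in the fibre data $h_l(w,p_j)$ alone; this also re-proves Lemma \ref{Lempsiz} (2) in passing. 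Your rationality argument --- symmetry of $(\det H)^2$ in the columns together with the fundamental theorem of symmetric functions applied to the roots $A(\lambda_j)$ of $F(X,\cdot)$, whose coefficients are polynomials in $X$ by Corollary \ref{LemCj} --- is likewise more explicit than the paper's brief appeal to Lemma \ref{Lempsiz} (1) and properties of the determinant. One point to keep visible: the identification $\psi(\cdot,p_j)=\Psi(\cdot,\lambda_j)$ near $p_\infty$, hence $h_l(w,p_j)=\lambda_j^l(1+O(\lambda_j^{-1}))$, rests on the standing assumption (\ref{assumez}) guaranteeing that the eigenvalues $A(\lambda_j)$ are distinct near $X=\infty$, which the paper invokes explicitly at this stage and which you use only implicitly through Lemma \ref{Lempsiz} (3).
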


\begin{proof}
Due to Lemma \ref{Lempsiz} (1) and the properties of determinant of (\ref{Gz}),
the correspondence
$X\mapsto G((z,[\gamma]),w,X)$
defines a meromorphic function on $\mathbb{C}=\mathbb{P}^1(\mathbb{C})-\{\infty\}$.
Now, take a sufficiently small and simply connected neighborhood $V \subset \mathbb{C}-\mathcal{N}$ of $w$.
Although $p\mapsto \psi ((z,[\gamma]),w,p)$ for $z\in V$ and $\gamma\subset V$ has an exponential singularity at $p_\infty$ (Lemma \ref{Lempsiz} (3)),
 we will see that $X\mapsto G((z,[\gamma]),w,X)$ is analytic around $X=\infty$.
Taking a sufficiently small neighborhood $U_\infty$, 
$\psi$ of (\ref{psiz}) is given by $\Psi$ of (\ref{Psiz})
and holomorphic on $U_\infty-\{p_\infty\}$.
Then,
considering the properties of the determinant of (\ref{Gz}),
and the fact that $e^{\lambda_1(z-w)}\cdots e^{\lambda_N(z-w)}=1$,
we can see that $G((z,[\gamma]),w,X)$ has the form
\begin{align}\label{GOz}
\left( {\rm det} \begin{pmatrix} 
1+O(\lambda_1^{-1}) & \cdots & 1+O(\lambda_N^{-1})\\
\lambda_1(1+O(\lambda_1^{-1})) & \cdots & \lambda_N(1+O(\lambda_N^{-1}))\\
\cdots & \cdots & \cdots \\
\lambda_1^{N-1}(1+O(\lambda_1^{-1})) & \cdots & \lambda_N^{N-1}(1+O(\lambda_N^{-1}))\\
\end{pmatrix} \right)^2,
\end{align}
around $X=\infty.$
Then, (\ref{GOz}) is a symmetric series in $\lambda_1,\cdots, \lambda_N$
with the highest term of degree $2(0+1+\cdots + (N-1))=N(N-1)$.
Setting $X_1=\frac{1}{X}$,
$X_1$ gives a complex coordinate around $X=\infty$
and
(\ref{GOz}) gives a Laurent series in $X_1.$
Due to Lemma \ref{Lempsiz} (1) and the assumption
(\ref{assumez}), applying the Riemann extension theorem,
 (\ref{GOz}) is holomorphic at $X_1=0$ and has a zero  of degree $\frac{N(N-1)}{N}=N-1$
 for $z\in V$ and $\gamma \subset V$.
By the analytic continuation in terms of  $(z,[\gamma])\in \mathfrak{X}$,
we have the assertion. 
\end{proof}

\begin{thm}\label{Thmphipolez}
Assume the condition (\ref{assumez}).
Suppose the algebraic curve $\mathcal{R}$ given by the defining equation (\ref{AlgCz}) 
is non-singular and of genus $g$.
Then, 
for $w \in \mathbb{C}-\mathcal{N}$ and $(z,[\gamma])\in \mathfrak{X}$,
the function
\begin{align}\label{ppsiz}
\mathcal{R} - \{p_\infty\}\ni p \mapsto \psi((z,[\gamma]),w,p) \in \mathbb{P}^1(\mathbb{C})
\end{align}
has $g$ poles.
\end{thm}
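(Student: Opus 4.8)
The plan is to count the poles of the function $p\mapsto\psi((z,[\gamma]),w,p)$ on $\mathcal{R}-\{p_\infty\}$ by transporting the question, through the covering $\pi:\mathcal{R}\to\mathbb{P}^{1}(\mathbb{C})$, to the zeros and poles of the rational function $X\mapsto G((z,[\gamma]),w,X)$ of (\ref{Gz}), whose polar behaviour at $X=\infty$ is already pinned down by Lemma \ref{LemGz}. Fix $w$ and take $(z,[\gamma])$ with $z$ near $w$ and $\gamma$ a short arc in a simply connected neighbourhood; by Lemma \ref{Lempsiz} (2) the pole set of $p\mapsto\psi$ is then independent of $(z,[\gamma])$, so it suffices to argue in this range. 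By Lemma \ref{LemPole} the poles of $p\mapsto\psi$ move analytically with $w$ and are not constant in $w$, so for generic $w$ I may assume they are simple, lie over pairwise distinct finite values of $X$, and avoid the (finite) branch locus of $\pi$; write $d$ for their number, so that the goal is $d=g$.

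First I would determine the finite poles of $X\mapsto G$. Let $q$ be a pole of $p\mapsto\psi$, of order $m$, lying over $X_{0}=\pi(q)$; since $q$ is not a branch point, $X-X_{0}$ is a local parameter at $q$, and on the sheet through $q$ one has $\psi((z,[\gamma]),w,\,\cdot\,)\sim (X-X_{0})^{-m}\,\varphi(z)$ with $\varphi\in\mathcal{L}(P_{z},X_{0})$ the residue eigenfunction, while on the remaining $N-1$ sheets $\psi$ and its $z$-derivatives stay finite at $X_{0}$ (here I use that the $C_{l}$ of Lemma \ref{LemCj0z} are holomorphic in $X$). Factoring $(X-X_{0})^{-m}$ out of the corresponding column of the matrix in (\ref{Gz}), the remaining determinant is the Wronskian in $z$ of $\varphi$ together with the $N-1$ eigenfunctions $\psi((z,[\gamma]),w,p_{j})$; these are eigenfunctions of $\mathcal{Q}_{[\gamma],X_{0}}$ for the $N$ pairwise distinct eigenvalues guaranteed by (\ref{assumez}), hence linearly independent and with nonvanishing Wronskian. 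Thus $G$ has a pole of order exactly $2m$ at $X_{0}$, and summing over the $d$ poles of $\psi$ the finite poles of $G$ have total order $2d$; together with the pole of order $N-1$ at $X=\infty$ from Lemma \ref{LemGz}, the polar divisor of $G$ on $\mathbb{P}^{1}(\mathbb{C})$ has degree $2d+N-1$.

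Next I would determine the zeros of $X\mapsto G$. Over any finite $X$ outside the branch locus the $N$ functions $z\mapsto\psi((z,[\gamma]),w,p_{j})$ are eigenfunctions of $\mathcal{Q}_{[\gamma],X}$ for distinct eigenvalues, hence a basis of $\mathcal{L}(P_{z},X)$, so their Wronskian and therefore $G$ is nonzero there; $G$ also has a pole, not a zero, at $X=\infty$. Hence the zeros of $G$ occur only at the finite branch points. At such an $X_{0}$ the colliding sheets make the determinant in (\ref{Gz}) vanish like a product of square-root-type differences of the local branches of $Y$; the same Vandermonde estimate used in the proof of Lemma \ref{LemGz} gives $\mathrm{ord}_{X_{0}}G=\sum_{p\in\pi^{-1}(X_{0})}(e_{p}-1)$, with $e_{p}$ the ramification index of $\pi$ at $p$. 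Summing over all finite branch points and using the Riemann--Hurwitz formula for $\pi$ (degree $N$ onto $\mathbb{P}^{1}(\mathbb{C})$, so $2g-2=-2N+\sum_{p}(e_{p}-1)$), while noting that $p_{\infty}$ is the unique point over $X=\infty$ and is totally ramified, hence contributes $e_{p_{\infty}}-1=N-1$, the zeros of $G$ have total order $\sum_{p\ne p_{\infty}}(e_{p}-1)=(2g-2+2N)-(N-1)=2g+N-1$.

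Finally, since a nonconstant rational function on $\mathbb{P}^{1}(\mathbb{C})$ has as many zeros as poles, $2d+N-1=2g+N-1$, whence $d=g$, which is the assertion. I expect the main obstacle to be the two local computations just sketched: verifying that a pole of $\psi$ of order $m$ on a single sheet raises the polar order of $G$ by exactly $2m$ with no accidental cancellation in the cofactor determinant, and that a finite branch point contributes exactly $\sum_{p\in\pi^{-1}(X_{0})}(e_{p}-1)$ to the zero order of $G$; once these, together with the general position of the poles of $\psi$ for generic $w$, are secured, the degree identity forces $d=g$ at once.
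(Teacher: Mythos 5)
Your proposal is correct and follows essentially the same route as the paper: both count the polar and zero divisors of the auxiliary rational function $G$ of (\ref{Gz}) — poles of total order $2\kappa+N-1$ coming from the poles of $\psi$ and from Lemma \ref{LemGz}, zeros of total order $\sum_{p\neq p_\infty}(e_p-1)$ at the finite branch points — and then apply the Riemann--Hurwitz formula to conclude $\kappa=g$. The only (harmless) difference is that you are slightly more explicit about the non-cancellation in the cofactor determinant at a pole of $\psi$, and you should add the paper's closing remark that the pole count, being an analytic function of $w$, is constant, so the conclusion for generic $w$ extends to every $w\in\mathbb{C}-\mathcal{N}$.
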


\begin{proof}
Let $\kappa$ be the number of poles of the function of (\ref{ppsiz}).
Since $\psi$ is given by (\ref{psiz}), together with  Lemma \ref{Lempsiz} (2),
we can see that  the set of the poles of $p\mapsto \psi ((z,[\gamma]),w,p)$ corresponds to that of $p\mapsto \frac{\partial^r}{\partial z^r}\psi ((z,[\gamma]),w,p)$ $(r\geq1)$.
So, from the definition of the rational function $X\mapsto G((z,[\gamma]),w,X)$  of (\ref{Gz}), 
the number of the poles of the function
$$
\mathbb{P}^1(\mathbb{C})-\{\infty\} \ni X \mapsto G((z,[\gamma]),w,X) \in \mathbb{P}^1(\mathbb{C})
$$
is equal to  $2\kappa$.
Together with Lemma \ref{LemGz},
the number of poles of the function
\begin{align}\label{Gentz}
\mathbb{P}^1(\mathbb{C}) \ni X \mapsto G((z,[\gamma]),w,X) \in \mathbb{P}^1(\mathbb{C})
\end{align}
is equal to $2\kappa +N-1$.
Since (\ref{Gentz}) is a rational function of the variable $X$,
this function has $2 \kappa +N-1$ zeros on $\mathbb{P}^1(\mathbb{C})-\{\infty\}$.

On the other hand,
from Lemma \ref{LemPole} and the fact that the ramification points of $\pi $ are isolated points of $\mathcal{R}$,
for generic base point $w\in \mathbb{C}-\mathcal{N}$,
 all poles of $p\mapsto \psi((z,[\gamma]),w,p)$ 
are out of the set of the ramification points of $\pi$.
We fix such a base  point $w$.
From the definition (\ref{Gz}),
the function $X\mapsto G((z,[\gamma]),w,X)$ vanishes at $X$ $(\not = \infty)$
if and only if 
$X$ is a branch point of the covering $\pi.$
Letting $e_p$ be the ramification index of $\pi$ at $p\in \mathcal{R}$.
From the property of determinants of matrices,
the right hand side of (\ref{Gz}) has zeros of degree $2(0+1+(e_p-1))=e_p(e_p-1)$ 
of a coordinate around $p\in \mathcal{R}.$
So, at $X=\pi (p)$,
$X\mapsto G((z,[\gamma]),w,X)$
has  zeros of  degree $\frac{e_p(e_p-1)}{e_p}=e_p-1.$
Therefore, the degree of zeros of the function of (\ref{Gentz}) 
coincides with $\displaystyle \sum_{p\in \mathcal{R}-\{p_\infty\}} (e_p -1).$
So, together with Lemma \ref{LemGz}, 
\begin{align}\label{RHz}
\displaystyle \sum_{p\in \mathcal{R}} (e_p -1) = 2 \kappa +2 N -2.
\end{align}
By the way,  applying the Riemann-Hurwitz formula,
 we have
 \begin{align}\label{RiemannHurwitzz}
 \displaystyle \sum_{p\in \mathcal{R}} (e_p -1)= (2 g- 2)+N(2-0).
 \end{align} 
 By (\ref{RHz}) and (\ref{RiemannHurwitzz}), we have $\kappa=g$.
 Therefore, we have proved the assertion for generic $w$.
 Since
 the number of poles of $p\mapsto \psi((z,[\gamma]),w,p)$ on $\mathcal{R}$ is 
 analytically dependent on the variable $w$,
  this is a constant function of $w$.
Thus, for every $w$, the number of poles is equal to $g$.
\end{proof}

Next, 
we consider the case that the algebraic curve $\mathcal{R}$ of (\ref{AlgCz}) has singular points $\mathcal{S}$ $(\subset \mathcal{R})$.
We have a resolution of singularities $\sigma:\tilde{\mathcal{R}}\rightarrow \mathcal{R}$.
Here,
$\sigma$ is given by a composition 
$\tilde{\mathcal{R}}=\mathcal{R}_{l}\rightarrow \mathcal{R}_{l-1}\rightarrow \cdots \rightarrow \mathcal{R}_{0}=\mathcal{R}$
of  blowing ups $\sigma_\nu:\mathcal{R}_{\nu}\rightarrow \mathcal{R}_{\nu-1}$ 
for a singular point of multiplicity $m_\nu \in \mathbb{Z}_{>0}$
$(\nu=1,\cdots,\kappa)$.
We have an $N$ to $1$ covering $\pi\circ \sigma:\tilde{\mathcal{R}}\rightarrow \mathbb{P}^1(\mathbb{C}).$
By considering the divisor $(\pi\circ \sigma)^{-1}(X) $ for $X\in \mathbb{P}^1(\mathbb{C})-\{\infty\},$
we can define the function 
$X\mapsto G((z,[\gamma]),w,X)$, also.
By a similar argument of the proof of Theorem \ref{Thmphipolez}
and
considering properties of the blowing ups (for example, see \cite{G}),
$X\mapsto G((z,[\gamma]),w,X)$ has zeros, 
not only
at the branch points of $\pi\circ \sigma$,
but also
the images of $\mathcal{S}$ under $\pi$,
where the sum of  the orders of zeros is at most $\displaystyle \sum_{\nu=1}^l m_\nu (m_\nu -1).$
Applying the argument  of the proof of Theorem \ref{Thmphipolez}
to the non-singular curve $\tilde{\mathcal{R}}$, we have the following.

\begin{cor}\label{CorHantei}
Using the above notations and letting $g$ be the genus of $\mathcal{R}$,
the function $p\mapsto \psi((z,[\gamma]),w,p)$ has at most $g +  \displaystyle \sum_{\nu=1}^l \frac{m_\nu (m_\nu -1)}{2}$ poles.
\end{cor}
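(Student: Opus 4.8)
The plan is to reduce the singular case to the non-singular case already settled in Theorem~\ref{Thmphipolez} by passing to the resolution $\sigma:\tilde{\mathcal{R}}\to\mathcal{R}$, and to control the discrepancy coming from the singular points $\mathcal{S}$ via the behavior of the function $X\mapsto G((z,[\gamma]),w,X)$. First I would observe that the eigenfunction $\psi((z,[\gamma]),w,p)$ pulls back to a function on $\tilde{\mathcal{R}}$, and its poles on $\tilde{\mathcal{R}}-\{p_\infty\}$ are, by Lemma~\ref{Lempsiz}(1)(2), exactly the poles of the meromorphic data $h_l(w,p)$; none of these sit over $X=\infty$, and for a generic base point $w$ (using Lemma~\ref{LemPole}) none of them meet the preimages of the branch locus of $\pi\circ\sigma$ nor the exceptional divisors. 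Call $\tilde\kappa$ the number of such poles on $\tilde{\mathcal{R}}$; since $\sigma$ is an isomorphism away from finitely many points, the number of poles of $p\mapsto\psi((z,[\gamma]),w,p)$ on $\mathcal{R}-\{p_\infty\}$ is at most $\tilde\kappa$.

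Next I would run the Riemann--Hurwitz bookkeeping on $\tilde{\mathcal{R}}$ exactly as in the proof of Theorem~\ref{Thmphipolez}, but now tracking the extra zeros of $X\mapsto G((z,[\gamma]),w,X)$ produced by the blowing ups. As indicated in the paragraph preceding the statement, for each blowing up $\sigma_\nu:\mathcal{R}_\nu\to\mathcal{R}_{\nu-1}$ at a point of multiplicity $m_\nu$, the Wronskian-type determinant in (\ref{Gz}) acquires zeros over the image point in $\mathbb{P}^1(\mathbb{C})$ whose total order is at most $m_\nu(m_\nu-1)$; this is the standard local computation for how the orders of vanishing of a family of functions change under a blow up of multiplicity $m_\nu$ (see \cite{G}). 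Counting zeros and poles of the rational function $X\mapsto G((z,[\gamma]),w,X)$ on $\mathbb{P}^1(\mathbb{C})$: the poles number $2\tilde\kappa+(N-1)$ by Lemma~\ref{LemGz} (the $N-1$ coming from $X=\infty$), so the zeros also number $2\tilde\kappa+N-1$; these zeros lie over the branch points of $\pi\circ\sigma$, contributing $\sum_{p\in\tilde{\mathcal{R}}}(e_p-1)$ counted with the local order $e_p-1$ as before, plus the extra contribution over $\pi(\mathcal{S})$, bounded by $\sum_{\nu=1}^{l}m_\nu(m_\nu-1)$. Hence
\begin{align*}
\sum_{p\in\tilde{\mathcal{R}}}(e_p-1)\;\le\;2\tilde\kappa+2N-2\;\le\;\sum_{p\in\tilde{\mathcal{R}}}(e_p-1)+\sum_{\nu=1}^{l}m_\nu(m_\nu-1).
\end{align*}
Then I would apply Riemann--Hurwitz for $\pi\circ\sigma:\tilde{\mathcal{R}}\to\mathbb{P}^1(\mathbb{C})$, using that $\tilde{\mathcal{R}}$ has genus $g$ (the geometric genus, which equals the genus of $\mathcal{R}$ in the sense the corollary uses), namely $\sum_{p\in\tilde{\mathcal{R}}}(e_p-1)=(2g-2)+2N$, to extract from the inequality chain the bound $\tilde\kappa\le g+\sum_{\nu=1}^{l}m_\nu(m_\nu-1)/2$. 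Since the actual number of poles on $\mathcal{R}$ is $\le\tilde\kappa$, and by analytic dependence on $w$ (as at the end of the proof of Theorem~\ref{Thmphipolez}) the bound for generic $w$ propagates to all $w$, the assertion follows.

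The step I expect to be the main obstacle is the precise local estimate that a blow up at a point of multiplicity $m_\nu$ introduces at most $m_\nu(m_\nu-1)$ extra zeros in the squared Wronskian determinant $G$: one must argue that the $N$ local branches of $\psi$ meeting at (the image of) a singular point, together with their first $N-1$ derivatives in $z$, force a vanishing of controlled order in the local coordinate downstairs, and that this order is governed by the multiplicity $m_\nu$ rather than by some finer invariant of the singularity. This is where the appeal to the structure of blowing ups is doing the real work; everything else is the same Riemann--Hurwitz and zero-counting argument already executed in Theorem~\ref{Thmphipolez}, now carried out on the smooth model $\tilde{\mathcal{R}}$.
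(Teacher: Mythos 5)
Your proposal is correct and follows essentially the same route as the paper: pass to the resolution $\sigma:\tilde{\mathcal{R}}\to\mathcal{R}$, rerun the zero/pole count of $X\mapsto G((z,[\gamma]),w,X)$ on $\mathbb{P}^1(\mathbb{C})$ with the extra zeros over $\pi(\mathcal{S})$ bounded by $\sum_{\nu}m_\nu(m_\nu-1)$, and apply Riemann--Hurwitz to $\pi\circ\sigma$ on the smooth model of genus $g$. The local estimate you flag as the main obstacle is exactly the step the paper itself only asserts via the properties of blowing ups with a reference to \cite{G}, so your write-up is, if anything, more explicit than the original.
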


We note that $\varpi (\mathcal{R} ) = g +  \displaystyle \sum_{\nu=1}^l \frac{m_\nu (m_\nu -1)}{2}$ is called the arithmetic genus of  the algebraic curve $\mathcal{R}.$
If $\mathcal{R}$ is non-singular,  $g = \varpi (\mathcal{R})$ holds.

\subsection{A criterion for single-valued  differential operators}

From Proposition \ref{PAsz},
operators $Q_{(z,[\gamma])}$ of (\ref{Qz}),
which commutes with $P_z$ of (\ref{Pz}),
 can be multivalued on $\mathbb{C}-\mathcal{N}$.
However, they are sometimes single-valued on $\mathbb{C}-\mathcal{N}$.
In this subsection, we give a criterion for such  single-valued differential operators
by applying the  results of the eigenfunction $\psi$ in the previous subsection.

\begin{thm}\label{ThmCriteria}
For the differential operators $P_z$ and $Q_{(z,[\gamma])}$, 
assume the condition (\ref{assumez}).
Suppose $N$ is a prime number.
Let $\varpi (\mathcal{R})$ be the arithmetic genus of $\mathcal{R}$.
If $\varpi (\mathcal{R}) < N$,
every coefficient of $Q_{(z,[\gamma])}$ is single-valued on $\mathbb{C}-\mathcal{N}$.
\end{thm}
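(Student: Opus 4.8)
The plan is to argue by contradiction. Assume $\varpi(\mathcal{R})<N$ but that some coefficient of $Q_{(z,[\gamma])}$ is multivalued on $\mathbb{C}-\mathcal{N}$; by Theorem \ref{ThmSingleSigma} (1) this means $\sigma_{[\delta_0]}\neq\mathrm{id}$ for some $[\delta_0]\in\pi_1(\mathbb{C}-\mathcal{N})$. I work on the normalization $\tilde{\mathcal{R}}$ of $\mathcal{R}$, with $\sigma_{[\delta]}$ lifted to $\tilde{\mathcal{R}}$ (if $\mathcal{R}$ is smooth, $\tilde{\mathcal{R}}=\mathcal{R}$). The deck group $G:=\mathrm{Aut}(\pi)$ of the connected degree-$N$ covering $\pi\colon\tilde{\mathcal{R}}\to\mathbb{P}^1(\mathbb{C})$ is finite of order dividing $N$; since $N$ is prime and the image of $[\delta]\mapsto\sigma_{[\delta]}$ is nontrivial, $G\cong\mathbb{Z}/N\mathbb{Z}$ and $\pi$ is Galois. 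Consequently $\pi$ is totally ramified over every branch point, $p_\infty$ is $G$-fixed, and the fixed points of $G$ are among the ramification points of $\pi$.

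Next I would study the pole divisor $D$ of the normalized eigenfunction $p\mapsto\psi((z,[\gamma]),w,p)$ of (\ref{psiz}). Under hypothesis (\ref{assumez}), Corollary \ref{CorHantei} gives $\deg D\le\varpi(\mathcal{R})$; by Lemma \ref{Lempsiz} (2) the locus $D$ is independent of $(z,[\gamma])\in\mathfrak{X}$; and by (the proof of) Lemma \ref{LemPole}, for a generic base point $w$ the divisor $D$ avoids the ramification points of $\pi$, hence the fixed locus of $G$. The heart of the argument is that $D$ is $G$-invariant: comparing $p\mapsto\psi((z,[\gamma]),w,p)$ with its analytic continuation along $\delta$ in the $z$-variable, and using the definition (\ref{sigmaD}), (\ref{sigmaC}) of $\sigma_{[\delta]}$ together with the commutativity of $\mathcal{Q}_{[\gamma],X}$ and its continuation $\mathcal{Q}_{[\gamma'],X}$ (Proposition \ref{PQQCom}, so that the two operators share an eigenbasis), this continuation carries the eigenfunction at $p$ to a scalar multiple of the eigenfunction at $\sigma_{[\delta]}^{\pm1}(p)$; since the pole locus of $\psi$ in $p$ is the same divisor $D$ before and after continuation (Lemma \ref{Lempsiz} (2) again), we get $\sigma_{[\delta]}(D)=D$. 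Now $G$ acts on $\mathrm{Supp}(D)$ without fixed points, so every $G$-orbit in $\mathrm{Supp}(D)$ has exactly $N$ points, whence $N\mid\deg D$; with $\deg D\le\varpi(\mathcal{R})<N$ this forces $\deg D=0$.

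Finally I would turn $\deg D=0$ into a contradiction. Then, by (the proof of) Lemma \ref{LemPole}, the rational functions $h_l(w,\cdot)$ of (\ref{psiz}) are holomorphic on $\tilde{\mathcal{R}}-\{p_\infty\}$; in particular $h_1(w,\cdot)$, which by (\ref{h_1z}) behaves like $\lambda+O(1)$ near $p_\infty$ in the local coordinate $\lambda^{-1}$, is a rational function on $\tilde{\mathcal{R}}$ with a single simple pole, so $\tilde{\mathcal{R}}\cong\mathbb{P}^1(\mathbb{C})$. Writing $\mu$ for the coordinate on $\tilde{\mathcal{R}}$ with $p_\infty\leftrightarrow\mu=\infty$ and $\pi(\mu)=\mu^{N}$, the function $\mu\mapsto\psi((z,[\gamma]),w,\mu)$ is entire on $\mathbb{C}_\mu$ and, by Lemma \ref{Lempsiz} (3), equals $e^{\mu(z-w)}(1+O(\mu^{-1}))$ as $\mu\to\infty$; so $\psi\,e^{-\mu(z-w)}$ is entire and bounded, hence $\equiv1$ by Liouville, and $\psi=e^{\mu(z-w)}$. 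Substituting this into $P_z\psi=\mu^{N}\psi$ and comparing powers of $\mu$ gives $a_2\equiv\cdots\equiv a_N\equiv0$, i.e. $P_z=d^{N}/dz^{N}$; but any differential operator commuting with $d^{N}/dz^{N}$ has constant coefficients, hence is single-valued, contradicting our assumption. Therefore $\sigma_{[\delta]}=\mathrm{id}$ for all $[\delta]$, and by Theorem \ref{ThmSingleSigma} (1) every coefficient of $Q_{(z,[\gamma])}$ is single-valued on $\mathbb{C}-\mathcal{N}$.

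I expect the main obstacle to be the $G$-invariance of $D$: it requires reconciling the monodromy of $P_zu=Xu$ with the deck action $\sigma_{[\delta]}$ on $\mathcal{R}$ — precisely, that continuation of the normalized eigenfunction around $[\delta]$ in $z$ moves its spectral parameter by $\sigma_{[\delta]}$ up to a factor carrying no poles or zeros on $\tilde{\mathcal{R}}-\{p_\infty\}$ — together with a careful use of the fact (Lemma \ref{Lempsiz} (2)) that the pole locus of $\psi$ in the spectral variable is independent of the chosen point of the universal cover. The fact that $G$ embeds in $\mathbb{Z}/N\mathbb{Z}$, the divisibility step, and the genus-zero endgame are then comparatively routine.
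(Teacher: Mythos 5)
Your argument is correct and its engine is the same as the paper's: transport the pole locus of $p\mapsto\psi((z,[\gamma]),w,p)$ by the map $\sigma_{[\delta_0]}$ induced from the $z$-monodromy, note that for a generic base point the poles avoid the ramification locus so each orbit has exactly $N$ points, and contradict the bound of at most $\varpi(\mathcal{R})<N$ poles from Theorem \ref{Thmphipolez} and Corollary \ref{CorHantei}; the step you single out as the main obstacle, $\sigma_{[\delta]}(D)=D$, is precisely the paper's (\ref{kaitenpsi}) combined with Lemma \ref{Lempsiz} (2), so your instinct about where the work lies is right. You deviate in two places. First, you obtain ``orbits have size $N$'' from covering-space theory: $\mathrm{Aut}(\pi)$ of a \emph{connected} degree-$N$ covering has order dividing $N$, hence is cyclic of order $N$ once it is nontrivial and $N$ is prime. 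This requires $\mathcal{R}$ (equivalently $F(X,Y)$) to be irreducible, which you assert but do not prove; it is in fact true here, since under (\ref{assumez}) the $N$ conjugates $A(\zeta_N^{j}\lambda)$ are distinct and the relevant local Galois group is cyclic of prime order, so the stabilizer of $A(\lambda)$ is trivial and $F$ is irreducible over $\mathbb{C}(X)$ --- but the paper avoids the issue entirely by an explicit computation: from $\sigma_{[\delta_0]}^{-1}(p_{k_0})=p_{k_1}$ with $\lambda_{k_1}=\zeta_N^{l}\lambda_{k_0}$, $l\neq0$, the substitution $\lambda\mapsto\zeta_N^{l}\lambda$ in (\ref{kaiten1}) iterates to give transitivity of $\langle\sigma_{[\delta_0]}\rangle$ on the fibre with no irreducibility input. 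You should either supply the irreducibility argument or replace the deck-group step by that computation. Second, your divisibility conclusion $N\mid\deg D$ only forces $\deg D=0$, so you need the genus-zero/Liouville endgame; the paper instead starts from a pole $q$ (whose existence it draws from Lemma \ref{LemPole}, which actually only says poles vary with $w$, not that any exist) and produces $N$ distinct poles at once. Your endgame --- $h_1$ has a single simple pole, so $\tilde{\mathcal{R}}\cong\mathbb{P}^1(\mathbb{C})$, $\psi=e^{\mu(z-w)}$, hence $P_z$ has constant coefficients, $\mathcal{N}=\emptyset$ and there is no nontrivial $[\delta_0]$ --- is sound, and it covers the pole-free case (e.g. $\varpi(\mathcal{R})=0$) that the paper's proof passes over silently; what each approach buys is that the paper's is shorter when a pole exists, while yours is complete in that corner case at the cost of the extra Liouville argument.
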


\begin{proof}
By the assumption (\ref{assumez}),
the eigenvalues $A(\lambda_1),\cdots,A(\lambda_N)$ are distinct.
We have the eigenfunction $\psi$ of (\ref{psiz}).
Due to Lemma \ref{LemPole},
we can take the base point $w$ such that 
there exist a pole $q$ of the function $p\mapsto \psi ((z,[\gamma]),w,p)$
which is not a ramification point of the projection $\pi:\mathcal{R}\rightarrow \mathbb{P}^1(\mathbb{C}).$

We assume that 
\begin{align}\label{assumesigma}
\text{there exists } [\delta_0]\in\pi_1 (\mathbb{C}-\mathcal{N}) \text{ such that } \sigma_{[\delta_0]}\not = id.
\end{align}
For generic $X \in \mathbb{P}^1(\mathbb{C})-\{\infty\}$
where $X$ is not a branch point of $\pi$ and
$\pi^{-1}(X)$ consists of $N$ distinct points $p_j = (X,A(\lambda_j))$ $(j=1,\cdots,N)$,
there are $k_0,k_1\in \{1,\cdots, N\}$ such that $k_0\not=k_1$ and 
\begin{align}\label{sigma0}
\sigma_{[\delta_0]}^{-1} (p_{k_0})=p_{k_1}.
\end{align}
From (\ref{sigmaD}) and (\ref{sigmaC}),
 (\ref{sigma0}) means that
 $
 \mathcal{Q}_{[\gamma],X} \Psi((z,[\gamma \cdot \delta_0]),w,\lambda_{k_0}) = A(\lambda_{k_1}) \Psi((z,[\gamma \cdot \delta_0]),w,\lambda_{k_0}) .
 $
 Since the eigenvalues of $ \mathcal{Q}_{[\gamma],X}$ are distinct,
 we obtain
  \begin{align}\label{kaiten}
 \Psi((z,[\gamma\cdot \delta_0]),w,\lambda_{k_0}) = {\rm const} \hspace{1mm} \Psi((z,[\gamma]),w,\lambda_{k_1}) 
 \end{align}
 for generic $(z,[\gamma])$ and $X$.
 Since $N$ is a prime number,
 by fixing the branch $\lambda$ of $\sqrt[N]{X}$ and letting $\zeta_N$ be the $N$-th root of the unity,
 we can suppose that
 $\lambda_{k_0} = \lambda$
 and $\lambda_{k_1}= \zeta_N^{l}  \lambda$
 for some $l\in \{0,\cdots, N-1\}$.  
 Recalling the form of $\Psi$ of (\ref{Psiz}),
the equation (\ref{kaiten})  induces the relation
 \begin{align}\label{kaiten1}
\Big(\displaystyle \sum_{s=0}^\infty  \xi_s((z,[\gamma\cdot \delta_0]),w) \lambda ^{-s} \Big) e^{\lambda (z-w)}
=
{\rm const } \hspace{1mm} \Big(\displaystyle \sum_{s=0}^\infty  \xi_s((z,[\gamma]),w) ((\zeta_{N}^l \lambda) ^{-s}) \Big) e^{(\zeta_N^l \lambda) (z-w)}
 \end{align}
for generic $(z,[\gamma])$ and $\lambda$.
By substituting
$ \zeta_N^l \lambda$  for $\lambda$,
we have
 $$
\Big(\displaystyle \sum_{s=0}^\infty  \xi_s((z,[\gamma\cdot \delta_0]),w)  (\zeta_N^l \lambda) ^{-s} \Big) e^{ (\zeta_N^l \lambda) (z-w)}
=
{\rm const } \hspace{1mm} \Big(\displaystyle \sum_{s=0}^\infty  \xi_s((z,[\gamma]),w) (\zeta_{N}^{2l} \lambda) ^{-s}\Big) e^{(\zeta_N^{2l} \lambda) (z-w)}
 $$
 from (\ref{kaiten1}).
 This means that
 it holds
 $
 \Psi((z,[\gamma\cdot \delta_0]),w,\lambda_{k_1})={\rm const } \hspace{1mm}\Psi((z,[\gamma]),w,\lambda_{k_2})
 $
  for generic $(z,[\gamma])$ and $\lambda$, where $\lambda_2 = \zeta_N^{2l} \lambda$.
  Setting $p_{k_2} =(X,A(\lambda_2))$, we have $\sigma_{[\delta_0^2]}^{-1} (p_{k_0})=p_{k_2}$ because
  $
   \Psi((z,[\gamma\cdot \delta_0^2]),w,\lambda_{k_0}) = {\rm const} \hspace{1mm} \Psi((z,[\gamma\cdot \delta_0]),w,\lambda_{k_1}) = {\rm const} \hspace{1mm} \Psi((z,[\gamma]),w,\lambda_{k_2})
  $
  holds.
 This implies that
 $ \sigma_{[\delta_0^2]}^{-1} (p_{k_0})=p_{k_2}$.
 Repeating this argument, 
 putting $p_m = (X,A(\lambda_{k_m}))$ 
  for $\lambda_{k_m}=\zeta_N^{ml} \lambda$,
 we have 
  \begin{align}\label{sigmadelta}
  \sigma_{[\delta_0^m]}^{-1} (p_{k_0})=p_{k_m} \quad \quad (m=0,\cdots,N-1).
  \end{align}
  Since $N$ is a prime number and $A(\lambda)$ is given by the form (\ref{star}),
  $p_{k_0},\cdots,p_{k_{N-1}}$ are distinct 
  and $\pi^{-1}(X)=\{p_{k_0},\cdots,p_{k_{N-1}}\}$. 
  Namely,
  (\ref{sigmadelta}) means that
  the action of the group $\langle \sigma_{[\delta_0]}\rangle$,
  which is  generated by $\sigma_{[\delta_0]}$,
  is transitive on the fibre $\pi^{-1}(X)$ for generic $X$.

  Recalling
 the eigenfunction $\psi$,
   (\ref{sigmadelta}) implies that
   \begin{align}\label{kaitenpsi}
   \psi((z,[\gamma \cdot \delta_0^m]),w,p)= {\rm const} \hspace{1mm}\psi((z,[\gamma]),w,\sigma_{[\delta_0^{m}]}^{-1} (p))
   \end{align}
    for $m=0,\cdots, N-1$,
   if $\pi(p)$ is not a branch point of $\pi$.
   At the beginning of the proof,
   we took the pole $q$ of the function
   $p \mapsto \psi((z,[\gamma]),w,p)$
   such that $\pi(q)$ 
   is not a branch point. 
   Since 
   the poles of $p\mapsto \psi((z,[\gamma]),w,p)$
   do not 
   depend on $(z,[\gamma])$
(see Lemma \ref{Lempsiz} (2)),
(\ref{kaitenpsi})
yields that
$\sigma_{[\delta_0^m]}(q)$ for $m\in \{0,\cdots,N-1\}$ are also poles.
So, we have at least $N$ distinct poles of $p\mapsto \psi((z,[\gamma]),w,p)$.

However, 
due to
the assumption,
 Theorem \ref{Thmphipolez} and its corollary,
we have at most
$\varpi (\mathcal{R}) ( < N)$ poles
of $p \mapsto \psi((z,[\gamma]),w,p)$.
This is a contradiction.
So, the assumption (\ref{assumesigma}) is false.
Therefore,
$\sigma_{[\delta]}=id$ for any $[\delta]\in \pi_1(\mathbb{C}-\mathcal{N})$.
According to Theorem \ref{ThmSingleSigma},
this means that all of the coefficients of $Q_{(z,[\gamma])}$ are single-valued. 
\end{proof}

\section{Differential equations with the action of the symplectic group}

\subsection{Preliminaries of automorphic forms}

For a commutative algebra $A$,
we set 
$Sp(n,A)=\{\alpha\in GL(2n,A)|{}^t \alpha J \alpha =J \}$,
where $J=\begin{pmatrix} 0 & -I_n \\ I_n & 0 \end{pmatrix}$.
The Siegel upper half plane $\mathbb{H}_n$ is given by
$
\mathbb{H}_n=\{\Omega\in M_n (\mathbb{C})  | {}^t\Omega = \Omega, {\rm Im}(\Omega)>0\}.
$
Here,
${\rm Im}(\Omega)>0$ means that the imaginary part of $\Omega$ is positive definite.
If $n=1$, $\mathbb{H}_1$ is the ordinary upper half plane $\mathbb{H}=\{z\in\mathbb{C}| {\rm Im}(z)>0\}$.
For $\alpha \in Sp(n,\mathbb{R})$  given 
by $\alpha = \begin{pmatrix} A & B \\ C & D\end{pmatrix}$, where $A,B,C,D\in M_n(\mathbb{R})$, 
we have the point
$\alpha (\Omega)= (A\Omega +B)(C\Omega + D)^{-1}\in \mathbb{H}_n$.
Set
$
j (\alpha,\Omega)={\rm det}(C\Omega+D).
$
We note that $j_\alpha (\Omega)\not =0.$

To define automorphic forms,
we will consider the case that the commutative ring $A$ is given by a totally real field $F$ such that $[F:\mathbb{Q}]=g.$
Let $\varphi_1,\cdots, \varphi_g : F \hookrightarrow \mathbb{R}$
be distinct $g$ embeddings.
Set ${\bf a}=\{\varphi_1,\cdots, \varphi_g \}.$
For any $\alpha\in Sp(n,F)$, let $\alpha^{\varphi_j} $ be the matrix 
whose components are given by the image of the components of $\alpha $ under $\varphi_j$. 
So, $\bf{a}$ embeds $Sp(n,F)$ to  $Sp(n,\mathbb{R})^g$
by
$\alpha \mapsto (\alpha^{\varphi_1},\cdots, \alpha^{\varphi_g}).$
From now on,
we will identify $Sp(n,F)$ with its image in  $Sp(n,\mathbb{R})^g$
via this embedding.
Then, for $\alpha=(\alpha^{\varphi_1},\cdots, \alpha^{\varphi_g})\in Sp(n,F)^g$
and
$\Omega=(\Omega_1,\cdots, \Omega_g)\in \mathbb{H}_n^g$,
we set
\begin{align}
\alpha(\Omega)=(\alpha^{\varphi_1}(\Omega_1),\cdots,\alpha^{\varphi_g}(\Omega_g))\in \mathbb{H}_n^g.
\end{align}
For any $\mathbb{C}$-valued function $f$ on $\mathbb{H}_n^g$ and $K\in \mathbb{Z}$,
we set
\begin{align}
f|_{[\alpha]_K}(\Omega)=j_{\alpha}(\Omega)^{-K} f(\alpha(\Omega)),
\end{align}
where 
$\displaystyle j_\alpha(\Omega)=\prod_{\nu=1}^g j(\alpha^{\varphi_j},\Omega_j).$
Throughout this paper,
we use these notations.

Let $\mathfrak{O}_F$ be the ring of integers of $F$.
For an ideal $\mathfrak{c}\subset \mathfrak{O}_F$,
we set 
$\Gamma(\mathfrak{c})=\{\alpha\in Sp(n,\mathfrak{O}_F)| \alpha-I_{2n}\in \mathfrak{c} M(2n,\mathfrak{O}_F)\}.$
For a group $\Gamma\subset Sp(n,F)$, if there exists an ideal $\mathfrak{c}$
such that $\Gamma$ contains $\Gamma(\mathfrak{c})$ as a finite index subset,
$\Gamma$ is called a congruence subgroup of $Sp(n,F).$

\begin{df}\label{dfAutomorph}
Let $\Gamma \subset Sp(n,F)$ be a congruence subgroup.
If a function $f$ on $\mathbb{H}_n^g$ satisfies the following conditions (i), (ii) and (iii),
we call $f$ an automorphic form for $\Gamma$ of weight $K$.

\begin{itemize} 

\item [(i)]
$f$ is holomorphic on $\mathbb{H}_n^g$.

\item[(ii)]
$f$ satisfies
$f|_{[\alpha]_K}=f$ for any $\alpha \in \Gamma$.

\item[(iii)]
When $F=\mathbb{Q}$ and $n=1$, 
$f|_{[\alpha]_K}(\Omega)$ has a  holomorphic Fourier expansion at cusps
for any $\alpha \in SL(2,\mathbb{Z})$.
Namely,
$
f|_{[\alpha]_K}(\Omega)=\displaystyle \sum_{k=0}^\infty \tilde{f}_{\alpha,k} \exp\Big(\frac{2 \pi \sqrt{-1} k \Omega}{N_\alpha}\Big),
$
holds,
where $\tilde{f}_{\alpha,k}\in \mathbb{C}$ and $N_\alpha \in \mathbb{Z}_{>0}.$
Here, `holomorphic' means that the Fourier expansion does not have any terms for $k<0$.
 \end{itemize}
\end{df}

\begin{rem}\label{RemFor}
The case of $F=\mathbb{Q} $ and $n=1$ is an exceptional case.
The condition (iii) is a growth condition for the cusps of $\Gamma$.
When $F\not = \mathbb{Q}$ or $n\geq 2$,
such a condition follows from the conditions (i) and (ii) (Koecher's principle).
\end{rem}

We note that automorphic forms of several variables are defined in various literature.
Our definition above of automorphic forms is 
due to 
\cite{S95}.
This definition seems  general enough for applications 
because
 we can obtain  important  modular functions as reductions.
For example, if  $\Gamma=Sp(n,\mathbb{Q})$,
then the corresponding automorphic forms are  well-known Siegel modular forms.
If $F\not=\mathbb{Q}$ and $n=1$,  
then the corresponding automorphic forms are Hilbert modular forms.

\subsection{Differential operators with coefficients satisfying a transformation law}

Let $a_l(\Omega,z)$ $(l=2,\cdots,N)$ be a function
of $\Omega=(\Omega_1,\cdots,\Omega_g)\in\mathbb{H}_n^g$ and $z\in \mathbb{C}$.
We suppose that $\Omega \mapsto a_l(\Omega,z)$  is holomorphic for generic $z$.
Moreover,
for fixed $\Omega$,
let $z\mapsto a_l(\Omega,z)$ be an  analytic function  
with at most poles.
We consider the cases that $a_l(\Omega,z)$ satisfies the transformation law  
\begin{align}\label{Trans}
a_l\left(\alpha(\Omega),\frac{z}{j_\alpha (\Omega)}\right)= j_\alpha(\Omega)^l a_l(\Omega,z),
\end{align}
for $\alpha \in \Gamma$.
For fixed $\Omega\in \mathbb{H}_n^g$,
let $\mathcal{N}_\Omega \subset \mathbb{C}=(z \text{-plane})$ be the union of  the sets of poles of the function $a_j(\Omega,z)$ $(j=2,\cdots,n)$.
Namely, for a fixed $\Omega\in \mathbb{H}_n^g$,
$a_2(\Omega,z),\cdots,a_N(\Omega,z)$ are holomorphic functions of $z \in \mathbb{C}-\mathcal{N}_\Omega$.

\begin{rem}
If $n=1$,
the action $(\Omega,z)\mapsto \Big(\alpha(\Omega),\frac{z}{j_\alpha (\Omega)}\Big)$ 
is equal to the action which defines the Jacobi forms of degree $1$ (see \cite{EZ}).
However,
if $n\geq 2$,
our action is different from the action for Jacobi forms of higher degrees studied in \cite{Ziegler}.
\end{rem}

\begin{lem}\label{Lempoles}
For any $\alpha\in \Gamma$ and $\Omega\in \mathbb{H}_n^g$,
 $z\in \mathbb{C} - \mathcal{N}_\Omega$ if and only if $\frac{z}{j_\alpha (\Omega)} \in \mathbb{C}- \mathcal{N}_{\alpha(\Omega)}. $
\end{lem}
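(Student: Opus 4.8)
The plan is to trace through the transformation law \eqref{Trans} term by term. Fix $\alpha \in \Gamma$ and $\Omega \in \mathbb{H}_n^g$, and set $\Omega' = \alpha(\Omega)$ and $w = z/j_\alpha(\Omega)$. First I would observe that the defining relation \eqref{Trans} says precisely that $a_l(\Omega', w) = j_\alpha(\Omega)^{\,l}\, a_l(\Omega, z)$ for each $l = 2,\dots,N$, and that $j_\alpha(\Omega)$ is a nonzero complex number depending only on $\alpha$ and $\Omega$ (not on $z$), as noted after the definition of $j$. Since multiplication by the nonzero constant $j_\alpha(\Omega)^{\,l}$ is an invertible holomorphic operation on the $z$-plane, the function $z \mapsto a_l(\Omega, z)$ has a pole at a point $z_0$ if and only if the function $w \mapsto a_l(\Omega', w)$ has a pole at $w_0 = z_0/j_\alpha(\Omega)$.

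Next I would assemble this over all $l$. By definition $\mathcal{N}_\Omega$ is the union over $l = 2,\dots,N$ of the pole sets of $a_l(\Omega, \cdot)$, and likewise $\mathcal{N}_{\Omega'}$ is the union of the pole sets of $a_l(\Omega', \cdot)$. The previous paragraph shows that the map $z \mapsto z/j_\alpha(\Omega)$ carries the pole set of $a_l(\Omega,\cdot)$ bijectively onto the pole set of $a_l(\Omega',\cdot)$ for each $l$; taking unions, the same map carries $\mathcal{N}_\Omega$ onto $\mathcal{N}_{\Omega'} = \mathcal{N}_{\alpha(\Omega)}$. Since this map is a bijection of $\mathbb{C}$, it also carries the complement $\mathbb{C} - \mathcal{N}_\Omega$ onto $\mathbb{C} - \mathcal{N}_{\alpha(\Omega)}$, which is exactly the asserted equivalence: $z \in \mathbb{C} - \mathcal{N}_\Omega$ if and only if $z/j_\alpha(\Omega) \in \mathbb{C} - \mathcal{N}_{\alpha(\Omega)}$.

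There is essentially no hard step here; the statement is a bookkeeping consequence of \eqref{Trans}. The one point that deserves a word of care is the logical quantifier structure: \eqref{Trans} is assumed to hold as an identity of meromorphic functions of $z$ for each fixed $\Omega$, so one should note that an identity $f(w) = c\, g(z)$ with $c \neq 0$ and $w = z/c$ forces the polar divisors to correspond — i.e.\ neither side can have a pole where the other is holomorphic, since the relation is an equality of germs wherever both sides are defined, and a removable-singularity argument (or simply the uniqueness of meromorphic continuation) rules out spurious poles. I would state this explicitly to make the "if and only if" airtight, and then conclude.
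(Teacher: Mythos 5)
Your proposal is correct and follows essentially the same route as the paper: the paper's proof is exactly the chain of equivalences $z\in\mathbb{C}-\mathcal{N}_\Omega \Leftrightarrow a_l(\Omega,z)\neq\infty$ for all $l \Leftrightarrow a_l\bigl(\alpha(\Omega),\tfrac{z}{j_\alpha(\Omega)}\bigr)=j_\alpha(\Omega)^l a_l(\Omega,z)\neq\infty \Leftrightarrow \tfrac{z}{j_\alpha(\Omega)}\in\mathbb{C}-\mathcal{N}_{\alpha(\Omega)}$, using that $j_\alpha(\Omega)\neq 0$. Your additional remarks on the identity of meromorphic functions and the bijectivity of $z\mapsto z/j_\alpha(\Omega)$ just make explicit what the paper leaves implicit.
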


\begin{proof}
Due to the transformation law (\ref{Trans}),
it holds that 
\begin{align*}
z\in \mathbb{C}-\mathcal{N}_\Omega &\Longleftrightarrow a_l(\Omega,z)\not = \infty \quad (l=2,\cdots, N)\\
&\Longleftrightarrow a_l\Big(\alpha(\Omega),\frac{z}{j_\alpha(\Omega)}\Big)=j_\alpha (\Omega)^l a_l(\Omega,z)\not = \infty. \\
&\Longleftrightarrow \frac{z}{j_\alpha(\Omega)}\in \mathbb{C}-\mathcal{N}_{\alpha (\Omega)}.
\end{align*}
\end{proof}

Let $\mathfrak{X}_\Omega$ be the universal covering of $\mathbb{C}-\mathcal{N}_\Omega.$
For a fixed point $w\in \mathbb{C}-\mathcal{N}_\Omega$,
any $s\in \mathfrak{X}_\Omega$ is represented by
$
s=(z,[\gamma]),
$
where $z\in \mathbb{C}-\mathcal{N}_\Omega$, $\gamma$ is an arc in $\mathbb{C}-\mathcal{N}_\Omega$ from $w$ to $z$ 
and $[\gamma]$ is the homotopy class of $\gamma$.
We note that $z $ gives a local coordinate of $\mathfrak{X}_\Omega$.

Let us consider the following  ordinary differential operator of the independent variable $z$: 
\begin{align}\label{MotherEq}
P_{\Omega,z}=
\frac{\partial^{N}}{\partial z^{N}} +a_2(\Omega,z) \frac{\partial^{N-2}}{\partial z^{N-2}} +a_3(\Omega,z)\frac{\partial ^{N-3}}{\partial z^{N-3}} +\cdots + a_N(\Omega,z).
\end{align}
Set
$(\Omega_1,z_1)=(\alpha (\Omega), \frac{z}{j_\alpha (\Omega)}) $.
Throughout this paper,
we assume that $\Omega\mapsto a_l(\Omega,z)$ are holomorphic for generic $z$.
Since 
$\frac{\partial }{\partial z_1}=j_\alpha (\Omega) \frac{\partial }{\partial z}$ and $a_l(\Omega_1,z_1)= j_\alpha (\Omega)^l a_l(\Omega,z) $,
we have
\begin{align}\label{WeightDiff}
P_{\Omega_1,z_1}=j_\alpha (\Omega)^{N} P_{\Omega,z}.
\end{align}

\begin{df}\label{DfDiff}
Let $D_{\Omega,z}  $ be a linear differential operator  of $z$ holomorphically parametrized  by $\Omega\in \mathbb{H}_n^g$.
If 
\begin{align}
D_{\Omega_1,z_1}=j_\alpha (\Omega)^K D_{\Omega,z}
\end{align}
holds for $\alpha \in \Gamma$,
we call $D_{\Omega,z}$ a differential operator of weight $K$ with respect to the action of $\Gamma$.
We call $D_{\Omega,z}u=0$ a linear differential equation of weight $K$ with respect to the action of $\Gamma$.
\end{df}

There exist many important examples which satisfy the transformation law  (\ref{Trans}).

\begin{exap}
Let $\Gamma$ be a congruence subgroup of $SL(n,F)$.
If $f(\Omega)$ be an automorphic form of weight $j$,
then $a_{j+k}(\Omega,z)=z^{-k} f(\Omega)$ satisfies the transformation law (\ref{Trans}) for $l=j+k$ for any $k\in \mathbb{Z}_{\geq 0}$.
If $k>0$, then
$\mathcal{N}_\Omega=\{0\}$ holds.
\end{exap}

\begin{exap}\label{ExapJacobi}
For a congruence subgroup $\Gamma \subset SL(2,\mathbb{Z}),$
a weak Jacobi form $\mathbb{H} \times \mathbb{C} \ni (\Omega,z) \mapsto f(\Omega,z)\in \mathbb{C}$ for $\Gamma \subset SL(2,\mathbb{Z})$ of weight $K$ and level $m$ is a holomorphic  function with the following properties
\begin{itemize}

\item [(i)] 
for any $\alpha=\begin{pmatrix} a & b \\ c & d \end{pmatrix} \in \Gamma, $
$f(\alpha (\Omega),\frac{z}{j_\alpha (\Omega)}) = j_\alpha(\Omega)^K {\rm exp}(-2\pi i \frac{mc z^2}{j_\alpha (\Omega)}) f(\Omega,z)$,

\item[(ii)]
for any $n_1,n_2 \in \mathbb{Z}$
$f(\Omega,z+n_1 \Omega + n_2)={\rm exp}(-\pi i (n_1^2 \Omega +2n_1 z)) f(\Omega,z)$,

\item[(iii)]
$f$ has a Fourier expansion $f(\Omega,z)=\displaystyle \sum_{n,l\in \mathbb{Z} } c_{n,l} {\rm exp}\Big(\frac{2\pi \sqrt{-1} n\Omega}{N}\Big) {\rm exp}(2\pi \sqrt{-1} n z )$ for some $N\in \mathbb{Z}$.
 \end{itemize}
Weak Jacobi forms are very important in number theory (see \cite{EZ}). 
If $f(\Omega,z)$ ($g(\Omega,z)$, resp.) is a weak Jacobi form for $\Gamma$ of weight $K_1$ ($K_2$, resp.) and level $m$,
then $a_l(\Omega,z)=\frac{f(\Omega,z)}{g(\Omega,z)}$ satisfies the transformation law of (\ref{Trans}) for $n=g=1$ and $l=K_1 - K_2$.
 \end{exap}
 
 \begin{exap}\label{ExapLame}
As a special case of \ref{ExapJacobi},
we consider  the  Lam\'e differential operator
 \begin{align}\label{Lame}
 P_{\Omega,z}=\frac{\partial^2}{\partial z^2} -B \wp(\Omega,z),
 \end{align}
 where $\Omega\in \mathbb{H}$ and $\wp(\Omega,z)$ is the Weierstrass $\wp$-function
 $$
 \wp(\Omega,z)=\frac{1}{z^2} + \displaystyle \sum_{(n_1,n_2)\in\mathbb{Z}^2 -\{(0,0)\}}\Big( \frac{1}{(z-n_1 -n_2 \Omega)^2} - \frac{1}{(n_1 + n_2 \Omega)^2}\Big).
 $$
  We note that $z\mapsto \wp (\Omega,z)$ has poles of degree $2$ at every $z_0\in \mathcal{N}_\Omega:=\mathbb{Z}+\mathbb{Z} \Omega$.
  
  Let $\Gamma$ be the elliptic full-modular group $SL(2,\mathbb{Z})$.
  For any $\alpha\in\Gamma$,
  we have 
  $$
  \wp\Big(\alpha(\Omega),\frac{z}{j_\alpha (\Omega)}\Big)=j_\alpha (\Omega)^2 \wp(\Omega,z).
  $$
  Especially, 
  $\wp(\Omega +1,z)=\wp(\Omega,z)$ holds and $\wp $ has the Fourier expansion at cusps:
  $$
  \wp(\Omega,z)=\pi^2 \Big(\frac{1}{6} - 4 \displaystyle \sum_{n=1}^\infty \frac{n q^{2n}}{1-q^{2n}} \Big)+ \frac{\pi^2}{\sin^2 (\pi z)}-8\pi^2 \sum_{n=1}^\infty \cos (2n\pi z) \frac{n q^{2n}}{1-q^{2n}},
  $$
   where $q={\rm exp}(2\pi \sqrt{-1} \Omega) $ (for detail, see \cite{EMOF}). 
  Therefore, 
  in terms  of Definition \ref{DfDiff},
 $P_{\Omega,z}$  of (\ref{Lame})
    is a differential operator of weight $2$ with respect to the action of $\Gamma=SL(2,\mathbb{Z}).$
 \end{exap}

When $\gamma:[0,1]\rightarrow \mathbb{C}-\mathcal{N}_\Omega$ is an arc with $\gamma(0)=w$ and $\gamma(1)=z$,
 let  $\gamma_1=\frac{\gamma}{j_\alpha (\Omega)}$  be the arc given by $\gamma_1(t)=\frac{\gamma(t)}{j_\alpha (\Omega)}$.
By virtue of Lemma \ref{Lempoles},
$\gamma_1$ is  an arc in $\mathbb{C}-\mathcal{N}_{\alpha(\Omega)}$.

\begin{thm}\label{ThmPsi}
Let $P_{\Omega,z}$ be the differential operator of (\ref{MotherEq}).

(1)
There exists the unique formal solution 
$\Psi(\Omega,(z,[\gamma]),w,\lambda)$
of the differential equation
\begin{align}\label{sp-diff}
P_{\Omega,z} u = \lambda^N u
\end{align}
in the form
\begin{align}\label{Psi}
\Psi(\Omega,(z,[\gamma]),w,\lambda )=\Big(\sum_{s=0}^\infty  \xi_s(\Omega,(z,[\gamma]),w) \lambda^{-s}\Big) e^{\lambda(z-w)}
\end{align}
such that
\begin{align}\label{ConditionPsi}
\begin{cases}
& \xi_0(\Omega,(z,[\gamma]),w)\equiv 1,\\
& \xi_s(\Omega,(w,[id]),w) = 0 \quad (s\geq 1).
\end{cases}
\end{align}
Here, $\Omega \mapsto \xi_s (\Omega,(z,[\gamma]),w)$ are  holomorphic for generic $((z,[\gamma]),w)$. 
Moreover, for a fixed $\Omega\in \mathbb{H}_n^g$,
$((z,[\gamma]),w) \mapsto \xi_s (\Omega,(z,[\gamma]))$ are locally holomorphic. 

(2)
For any $\alpha\in \Gamma$, it holds
\begin{align}\label{Psino0}
&\Psi\Big(\alpha(\Omega),\Big(\frac{z}{ j_\alpha (\Omega ) },\Big[\frac{\gamma}{ j_\alpha (\Omega ) }\Big]\Big), \frac{w}{ j_\alpha (\Omega ) } , j_\alpha (\Omega ) \lambda\Big)=\Psi(\Omega,(z,[\gamma]),w,\lambda). 
\end{align}
The  function $ \xi_s(\Omega,(z,[\gamma]),w)$ in
(\ref{Psi}) 
satisfies
the transformation law
\begin{align}\label{XiTrans}
\xi_s \left(\alpha (\Omega), \left(\frac{z}{j_\alpha(\Omega)},\left[\frac{\gamma}{j_\alpha(\Omega)}\right]\right), \frac{w}{j_\alpha (\Omega)} \right) = j_\alpha (\Omega)^s \xi_s(\Omega,(z,[\gamma]),w).
\end{align}
 \end{thm}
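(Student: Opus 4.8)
Part (1) is an immediate consequence of Proposition \ref{PropPsiz} applied to the ordinary differential operator $P_{\Omega,z}$ with $\Omega$ viewed as a fixed parameter: for each $\Omega$ the coefficients $a_2(\Omega,z),\dots,a_N(\Omega,z)$ are meromorphic in $z$ with pole set $\mathcal{N}_\Omega$, so the universal covering $\mathfrak{X}_\Omega$ plays the role of $\mathfrak{X}$ and Proposition \ref{PropPsiz} furnishes the unique formal solution $\Psi(\Omega,(z,[\gamma]),w,\lambda)$ with the normalization $\xi_0\equiv 1$, $\xi_s(\Omega,(w,[id]),w)=0$ $(s\ge 1)$, together with local holomorphy of $\xi_s$ in $((z,[\gamma]),w)$. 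The only new point is the holomorphic dependence on $\Omega$; for this I would revisit the recursive construction in the proof of Proposition \ref{PropPsiz}, where each $\eta_{N+s_0-1}=\xi_{N+s_0-1}$ is obtained by integrating relation (\ref{Inductionz}) along $\gamma$. The right-hand side of (\ref{Inductionz}) is a universal polynomial (over $\mathbb{Z}$) in the $a_j(\Omega,z)$ and in lower-indexed $\xi_l$ and their $z$-derivatives; since $\Omega\mapsto a_j(\Omega,z)$ is holomorphic for generic $z$ by hypothesis, an induction on $s$ shows $\Omega\mapsto\xi_s(\Omega,(z,[\gamma]),w)$ is holomorphic for generic $((z,[\gamma]),w)$, the integration in $z$ preserving holomorphy in the parameter.

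Part (2) is the substantive claim. The strategy is to show that the function
$$
\widetilde{\Psi}(\Omega,(z,[\gamma]),w,\lambda):=\Psi\Big(\alpha(\Omega),\Big(\tfrac{z}{j_\alpha(\Omega)},\big[\tfrac{\gamma}{j_\alpha(\Omega)}\big]\Big),\tfrac{w}{j_\alpha(\Omega)},j_\alpha(\Omega)\lambda\Big)
$$
is itself a formal solution of (\ref{sp-diff}) of the normalized shape (\ref{Psi})--(\ref{ConditionPsi}), and then invoke the uniqueness asserted in Part (1) to conclude $\widetilde{\Psi}=\Psi$. First I would check that $\widetilde\Psi$ solves $P_{\Omega,z}u=\lambda^N u$: writing $(\Omega_1,z_1)=(\alpha(\Omega),z/j_\alpha(\Omega))$ and using $\partial/\partial z_1 = j_\alpha(\Omega)\,\partial/\partial z$ together with the transformation law (\ref{Trans}) of the coefficients, one gets $P_{\Omega_1,z_1}=j_\alpha(\Omega)^N P_{\Omega,z}$ exactly as in (\ref{WeightDiff}); hence, since $\Psi(\Omega_1,\cdot,\cdot,\mu)$ satisfies $P_{\Omega_1,z_1}\Psi=\mu^N\Psi$, substituting $\mu=j_\alpha(\Omega)\lambda$ yields $j_\alpha(\Omega)^N P_{\Omega,z}\widetilde\Psi = (j_\alpha(\Omega)\lambda)^N\widetilde\Psi$, i.e. $P_{\Omega,z}\widetilde\Psi=\lambda^N\widetilde\Psi$. (Here Lemma \ref{Lempoles} guarantees $z/j_\alpha(\Omega)\in\mathbb{C}-\mathcal{N}_{\alpha(\Omega)}$ whenever $z\in\mathbb{C}-\mathcal{N}_\Omega$, and the arc $\gamma/j_\alpha(\Omega)$ lies in $\mathbb{C}-\mathcal{N}_{\alpha(\Omega)}$, so $\widetilde\Psi$ is well-defined on $\mathfrak{X}_\Omega$.)

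Next I would verify that $\widetilde\Psi$ has the correct shape and normalization. The exponential factor transforms correctly because $e^{(j_\alpha(\Omega)\lambda)(z/j_\alpha(\Omega)-w/j_\alpha(\Omega))}=e^{\lambda(z-w)}$, and the series part becomes $\sum_s \xi_s(\Omega_1,(z_1,[\gamma_1]),w_1)\,(j_\alpha(\Omega)\lambda)^{-s}$, so that $\widetilde\Psi$ is of the form (\ref{Psi}) with $s$-th coefficient $j_\alpha(\Omega)^{-s}\xi_s(\alpha(\Omega),(z/j_\alpha(\Omega),[\gamma/j_\alpha(\Omega)]),w/j_\alpha(\Omega))$. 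Its $0$-th coefficient is $\xi_0(\Omega_1,\cdots)\equiv 1$, and at $(z,[\gamma])=(w,[id])$ the point $(z_1,[\gamma_1])$ equals $(w/j_\alpha(\Omega),[id])=(w_1,[id])$, so the $s$-th coefficient vanishes there for $s\ge1$ by (\ref{ConditionPsi}) for $\Psi(\Omega_1,\cdot,\cdot,\cdot)$. Thus $\widetilde\Psi$ satisfies all the hypotheses of the uniqueness statement in Part (1), and therefore $\widetilde\Psi=\Psi$, which is exactly (\ref{Psino0}). Comparing the coefficients of $\lambda^{-s}$ on both sides of (\ref{Psino0}) then gives precisely the transformation law (\ref{XiTrans}) for $\xi_s$.

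The only delicate point, and the one I would be most careful about, is the bookkeeping of the homotopy data under the scaling $z\mapsto z/j_\alpha(\Omega)$: one must check that $\gamma\mapsto\gamma/j_\alpha(\Omega)$ descends to a well-defined map of homotopy classes $\mathfrak{X}_\Omega\to\mathfrak{X}_{\alpha(\Omega)}$ compatible with the chosen base points $w$ and $w/j_\alpha(\Omega)$, which is where Lemma \ref{Lempoles} is used in an essential way (it identifies $\mathbb{C}-\mathcal{N}_\Omega$ with $\mathbb{C}-\mathcal{N}_{\alpha(\Omega)}$ via multiplication by $1/j_\alpha(\Omega)$, hence their universal coverings), and the normalization check $\xi_s(\Omega_1,(w_1,[id]),w_1)=0$ is exactly where this compatibility is cashed in. The rest is the routine chain-rule computation above.
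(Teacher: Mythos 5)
Your proposal is correct and follows essentially the same route as the paper: part (1) by running the recursive construction of Proposition \ref{PropPsiz} with $\Omega$ as a holomorphic parameter, and part (2) by observing that the transformed function is again a normalized formal solution of the same equation (via $P_{\Omega_1,z_1}=j_\alpha(\Omega)^N P_{\Omega,z}$ and $e^{\lambda_1(z_1-w_1)}=e^{\lambda(z-w)}$) and invoking uniqueness, then comparing coefficients of $\lambda^{-s}$. Your write-up is somewhat more explicit than the paper's about verifying the normalization at $(w,[id])$ and the compatibility of the homotopy data under $z\mapsto z/j_\alpha(\Omega)$, but these are the same checks the paper implicitly relies on.
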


\begin{proof}
(1)
For fixed $\Omega \in \mathbb{H}_g,$
  putting $\displaystyle u=\Big(\sum_{s=0}^\infty \eta_s(\Omega,z) \lambda^{-s} \Big) e^{\lambda(z-w)}$ to (\ref{sp-diff}),
  by the same argument as in the proof of Proposition \ref{PropPsiz},
we can obtain
\begin{align}\label{Induction}
&N\frac{\partial }{\partial z} \eta_{N+s_0-1} (\Omega,z) \notag\\
&= \left(\text{a polynomial in } \frac{\partial^\nu}{\partial z^\nu}\eta_l (\Omega,z) \hspace{1mm} (l<N+s_0-1,\nu \in \mathbb{Z}_{\geq 0})\text{ and } a_j (\Omega,z) \text{ defined over } \mathbb{Z}\right)
\end{align}
for any $s_0$.
By the integration of the relation (\ref{Induction}) on arc $\gamma \in \mathbb{C}-\mathcal{N}_\Omega$ whose start point is $w$,
we can  obtain the expression of $\eta_\mu (\Omega,z)$ in terms of $\eta_\nu (\Omega,z)$ $(\nu < \mu)$ and $a_l(\Omega,z)$.  
From
the conditions that $\eta_0\equiv 1$
and
$\eta_s(\Omega,(w,[id]))=0$ $(s\geq1)$,
we can 
determines the sequence $\{\eta_s(\Omega,z)\}_s$ uniquely.
Such  $\eta_s(\Omega,(z,[\gamma]))$ give the required functions $\xi_s(\Omega,(z,[\gamma]),w)$.

Moreover,
since the coefficients $a_l(\Omega,z)$ of  (\ref{MotherEq}) are holomorphic functions of $\Omega$ for generic $z$
and $\xi_s (\Omega,(z,[\gamma]),w)$ are determined by 
  the construction   via (\ref{Induction}), 
$\xi_s (\Omega,(z,[\gamma]),w)$ are holomorphic functions of $\Omega\in \mathbb{H}_n^g$ for generic $((z,[\gamma]),w)$.
Also, 
for fixed $\Omega$,
$\xi_s (\Omega,(z,[\gamma]),w)$ are locally holomorphic functions of $((z,[\gamma]),w)\in \mathfrak{X}_\Omega \times (\mathbb{C}-\mathcal{N}_\Omega)$.

(2)
We consider the transformation
\begin{align}\label{11111}
(\Omega,(z,[\gamma]),w,\lambda)\mapsto (\Omega_1,(z_1,[\gamma_1]),w_1,\lambda_1)=\left(\alpha(\Omega),\left(\frac{z}{j_\alpha (\Omega)},\left[\frac{\gamma}{j_\alpha (\Omega)}\right]\right),\frac{w}{j_\alpha (\Omega)},j_\alpha (\Omega)\lambda \right).
\end{align}
By virtue of (\ref{WeightDiff}), the differential equation $P_{\Omega,z}u=\lambda^N u$ 
gives the same equation with $P_{\Omega_1,z_1}u=\lambda_1^N u$
under  the correspondence (\ref{11111}).
Since we have the uniqueness of the solution $\Psi $ in the form of (\ref{Psi}) and the condition (\ref{ConditionPsi}),
we obtain (\ref{Psino0}).
Then, we have
\begin{align} \label{Psino1}
 \Big(\sum_{s=0}^\infty  \xi_s(\Omega,(z,[\gamma]),w) \lambda^{-s}\Big) e^{\lambda(z-w)}=\Big(\sum_{s=0}^\infty  \xi_s(\Omega_1,(z_1,[\gamma_1]),w_1) \lambda_1^{-s}\Big) e^{\lambda_1 (z_1-w_1)}.
\end{align}
By cancelling $e^{\lambda (z-w)}=e^{\lambda_1 (z_1-w_1)}$ and comparing the coefficient of $\lambda^{-s}=\lambda_1^{-s} j_\alpha (\Omega)^s$,
we have the transformation law (\ref{XiTrans}).
\end{proof}

\subsection{Commutative differential operators with an action of a symplectic group (generic cases of $F\not= \mathbb{Q}$ or $n\geq 2$)}

We consider the differential operator
\begin{align}\label{Q}
Q_{\Omega,(z,[\gamma])}
=b_0(\Omega,(z,[\gamma])) \frac{\partial^M}{\partial z^M} +b_{1}(\Omega,(z,[\gamma])) \frac{\partial^{M-1}}{\partial z^{M-1}} + \cdots + b_M(\Omega,(z,[\gamma])),
\end{align}
which commutes with the differential operator $P_{\Omega,z}$ of (\ref{MotherEq}).
Here, we assume that 
 the coefficients $b_k(\Omega,(z,[\gamma]))$ $(k=0,\cdots, M)$ are locally analytic functions  of
  $ (z,[\gamma]) \in \mathfrak{X}_\Omega$.

\begin{thm}\label{PAs}
(1) Let $P_{\Omega,z}$ ($Q_{\Omega,(z,[\gamma])}$, resp.) be the differential operator of (\ref{MotherEq}) ((\ref{Q}), resp.).
Then, $P_{\Omega,z}$ and $Q_{\Omega,(z,[\gamma])}$ are commutative if and only if 
the quotient
$\displaystyle
\frac{Q _{\Omega,(z,[\gamma])}\Psi(\Omega,(z,[\gamma]),w,\lambda)}{\Psi(\Omega,(z,[\gamma]),w,\lambda)} $
for $\Psi$ of (\ref{Psi})
coincides with 
\begin{align}\label{starstar}
A(\Omega,\lambda)=\displaystyle \sum_{s=-M}^\infty A_s(\Omega)\lambda^{-s}
\end{align}
for generic $\lambda$,
where $A(\Omega,\lambda)$ does not depend on the variables $z$ and $w$.

(2) If $P_{\Omega,z}$ commutes with both $Q^{(1)}_{\Omega,(z,[\gamma])}$ and $Q^{(2)}_{\Omega,(z,[\gamma])}$,
then $Q^{(1)}_{\Omega,(z,[\gamma])}$ commutes with $Q^{(2)}_{\Omega,(z,[\gamma])}$.

(3)
If the differential operator $Q_{\Omega,(z,[\gamma])}$ is of weight $K$
with respect to the action of $\Gamma$,
then the members of the sequence $\{A_s(\Omega)\}_s$
satisfy
\begin{align}\label{AAMod}
A_s(\alpha (\Omega))=j_\alpha (\Omega)^{K+s} A_s(\Omega).
\end{align}
\end{thm}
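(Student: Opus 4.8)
The plan is to deduce parts (1) and (2) directly from the corresponding statements of Section 1, and to obtain part (3) from the transformation law of the multivalued Baker--Akhiezer function established in Theorem \ref{ThmPsi}(2). For (1), I would fix $\Omega\in\mathbb{H}_n^g$. Then $P_{\Omega,z}$ is an operator of the shape (\ref{Pz}) with coefficients holomorphic on $\mathbb{C}-\mathcal{N}_\Omega$, and $Q_{\Omega,(z,[\gamma])}$ is an operator of the shape (\ref{Qz}) defined on $\mathfrak{X}_\Omega$; by Theorem \ref{ThmPsi}(1) the function $\Psi(\Omega,(z,[\gamma]),w,\lambda)$ is precisely the multivalued Baker--Akhiezer function of Proposition \ref{PropPsiz} for this fixed $\Omega$. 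Hence Proposition \ref{PAsz} applies verbatim and gives the claimed equivalence, with the series $A(\Omega,\lambda)=\sum_{s=-M}^{\infty}A_s(\Omega)\lambda^{-s}$; the coefficients $A_s(\Omega)$ depend on $\Omega$ only through $P_{\Omega,z}$ and $Q_{\Omega,(z,[\gamma])}$, not on $z$ or $w$. Part (2) is obtained the same way by invoking Proposition \ref{PQQCom} for the fixed parameter $\Omega$.

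For (3), the idea is to transport the eigenvalue relation $Q_{\Omega,(z,[\gamma])}\Psi=A(\Omega,\lambda)\Psi$ by the action of $\alpha\in\Gamma$. Write $(\Omega_1,z_1,\gamma_1,w_1,\lambda_1)=(\alpha(\Omega),z/j_\alpha(\Omega),\gamma/j_\alpha(\Omega),w/j_\alpha(\Omega),j_\alpha(\Omega)\lambda)$. Applying part (1) at $\Omega_1$ yields $Q_{\Omega_1,(z_1,[\gamma_1])}\Psi(\Omega_1,(z_1,[\gamma_1]),w_1,\lambda_1)=A(\Omega_1,\lambda_1)\Psi(\Omega_1,(z_1,[\gamma_1]),w_1,\lambda_1)$. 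On the right, the transformation law (\ref{Psino0}) identifies $\Psi(\Omega_1,(z_1,[\gamma_1]),w_1,\lambda_1)$ with $\Psi(\Omega,(z,[\gamma]),w,\lambda)$. On the left, the hypothesis that $Q_{\Omega,(z,[\gamma])}$ has weight $K$ means, in the sense of Definition \ref{DfDiff} (with $\partial/\partial z_1=j_\alpha(\Omega)\,\partial/\partial z$ absorbed exactly as in (\ref{WeightDiff})), the operator identity $Q_{\Omega_1,(z_1,[\gamma_1])}=j_\alpha(\Omega)^K Q_{\Omega,(z,[\gamma])}$. Combining these and using $\Psi\not\equiv 0$ gives the scalar identity
\begin{align}\label{AtransSketch}
A(\alpha(\Omega),j_\alpha(\Omega)\lambda)=j_\alpha(\Omega)^K A(\Omega,\lambda).
\end{align}
Finally I would expand both sides of (\ref{AtransSketch}) as Laurent series in $\lambda^{-1}$: the left side contributes $A_s(\alpha(\Omega))\,j_\alpha(\Omega)^{-s}$ to the coefficient of $\lambda^{-s}$, while the right side contributes $j_\alpha(\Omega)^{K}A_s(\Omega)$, so comparison of coefficients yields $A_s(\alpha(\Omega))=j_\alpha(\Omega)^{K+s}A_s(\Omega)$, which is (\ref{AAMod}).

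The routine verifications I am skipping are the bookkeeping in parts (1)--(2) that the hypotheses of the Section 1 propositions hold for each fixed $\Omega$. The one genuinely delicate point is the operator identity $Q_{\Omega_1,(z_1,[\gamma_1])}=j_\alpha(\Omega)^K Q_{\Omega,(z,[\gamma])}$: one must check that the weight-$K$ normalization of Definition \ref{DfDiff}, originally phrased for operators depending only on $(\Omega,z)$, extends consistently to operators whose coefficients also carry the homotopy datum $[\gamma]$, using Lemma \ref{Lempoles} so that $[\gamma]\mapsto[\gamma/j_\alpha(\Omega)]$ is a well-defined bijection $\mathfrak{X}_\Omega\to\mathfrak{X}_{\alpha(\Omega)}$ and that the chain rule $\partial/\partial z_1=j_\alpha(\Omega)\,\partial/\partial z$ matches the substitution appearing in $\Psi$. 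Once that is in place, the rest reduces to the scalar computation above.
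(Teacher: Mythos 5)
Your proposal is correct and takes essentially the same route as the paper: parts (1) and (2) are reduced to Proposition \ref{PAsz} and Proposition \ref{PQQCom} for each fixed $\Omega$, and part (3) is obtained by combining the weight-$K$ property of $Q$ with the invariance (\ref{Psino0}) of $\Psi$ to get $A(\alpha(\Omega),j_\alpha(\Omega)\lambda)=j_\alpha(\Omega)^K A(\Omega,\lambda)$ and then comparing Laurent coefficients. Your write-up merely spells out the intermediate operator identity that the paper leaves implicit.
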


\begin{proof}
(1) (2) 
These are proved by a similar argument to the proof of Propositon \ref{PAsz} and Proposition \ref{PQQCom}.

(3)
We recall that $\Psi$ in (\ref{Psi})
satisfies (\ref{Psino0}).
Since $Q_{\Omega,(z,[\gamma])}$ is of weight $K$,
we have
$
A(\Omega_1,\lambda_1) = j_\alpha (\Omega)^K A(\Omega,\lambda).
$
Namely, we have
\begin{align*}
\displaystyle \sum_{s=-M}^\infty A_s(\Omega_1) \lambda_1^{-s} = j_\alpha (\Omega)^K \displaystyle \sum_{s=-M}^\infty A_s(\Omega) \lambda^{-s}.
\end{align*}
By comparing the coefficients of $\lambda^{-s}$, the assertion follows.
\end{proof}

\begin{thm}\label{AutoToDiffZ}
For any $j\in \{0,-1,\cdots , -M\},$
let $A_{j}(\Omega)$ satisfy the transformation law
\begin{align}\label{ModTrans}
A_j(\Omega_1) = j_\alpha (\Omega)^{K+j} A_j(\Omega)
\end{align}
for any $\alpha\in \Gamma$.
If there exists a differential operator 
$Q_{\Omega,(z,[\gamma])}$ of rank $M$ of weight $K$
with respect to $\Gamma$
satisfying
  \begin{align}\label{QPAP2}
 Q_{\Omega,(z,[\gamma])} \Psi(\Omega,(z,[\gamma]),w,\lambda) = A(\Omega,\lambda)  \Psi(\Omega,(z,[\gamma]),w,\lambda),
 \end{align}
 where $\displaystyle A(\Omega,\lambda)=\sum_{s=-M}^\infty A_s(\Omega) \lambda^{-s}$,
 then $Q_{\Omega,(z,[\gamma])}$ 
 is uniquely determined only by 
given operator $P_{\Omega,z}$ of (\ref{MotherEq}) and
the functions  $A_{j}(\Omega)$ $(j=0,\cdots , -M)$. 
 Here,
 $A_s(\Omega)$ $(s\geq 1)$ are also uniquely determined only by  $P_{\Omega,z}$  and
  $A_{j}(\Omega)$ $(j=0,\cdots , -M)$.
 \end{thm}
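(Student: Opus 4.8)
The plan is to read off the coefficients of $Q_{\Omega,(z,[\gamma])}$ recursively from the defining relation (\ref{QPAP2}) by comparing powers of $\lambda$, using the explicit shape of the Baker--Akhiezer function from Theorem \ref{ThmPsi}. Write $\Psi(\Omega,(z,[\gamma]),w,\lambda)=\Phi\cdot e^{\lambda(z-w)}$ with $\Phi=\sum_{t\ge 0}\xi_t\lambda^{-t}$, where $\xi_0\equiv 1$ by (\ref{ConditionPsi}) and, crucially, every $\xi_t$ is built from $P_{\Omega,z}$ alone. Putting $Q_{\Omega,(z,[\gamma])}=\sum_{k=0}^{M}b_k\,\partial_z^{M-k}$ and using $\partial_z^{M-k}\big(\Phi e^{\lambda(z-w)}\big)=\big((\lambda+\partial_z)^{M-k}\Phi\big)e^{\lambda(z-w)}$, the relation (\ref{QPAP2}) collapses, after cancelling $e^{\lambda(z-w)}$, to
\[
\sum_{k=0}^{M} b_k \sum_{i=0}^{M-k}\binom{M-k}{i}\lambda^{\,M-k-i}\,\partial_z^{i}\Phi
\;=\;\Big(\sum_{s=-M}^{\infty}A_s(\Omega)\,\lambda^{-s}\Big)\Phi .
\]

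Next I would substitute $\Phi=\sum_{t\ge 0}\xi_t\lambda^{-t}$ and equate coefficients of $\lambda^{M-m}$ for $m=0,1,2,\dots$. Since $\xi_0\equiv 1$ and the derivatives inside $Q$ never act on the $b_k$, the left-hand coefficient of $\lambda^{M-m}$ equals $b_m$ plus a polynomial in $b_0,\dots,b_{m-1}$ and the $\partial_z^{i}\xi_t$, while the right-hand coefficient is $\sum_{t=0}^{m}A_{-M+m-t}(\Omega)\,\xi_t$, which for $m\le M$ involves only $A_0(\Omega),A_{-1}(\Omega),\dots,A_{-M}(\Omega)$ (all occurring indices being $\le 0$). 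Hence, by induction on $m$ from $0$ to $M$, each $b_m$ is a universal polynomial in $A_0(\Omega),\dots,A_{-M}(\Omega)$, in $b_0,\dots,b_{m-1}$, and in finitely many $z$-derivatives of the $\xi_t$ --- no integration is needed. This already shows that $b_0,\dots,b_M$, and therefore $Q_{\Omega,(z,[\gamma])}$, are uniquely determined by $P_{\Omega,z}$ and by the seed functions $A_0(\Omega),\dots,A_{-M}(\Omega)$; in particular any two operators solving (\ref{QPAP2}) with the same $M+1$ seed functions coincide.

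For the remaining assertion I would continue the comparison for $m\ge M+1$. There is then no new coefficient $b_m$, so the coefficient of $\lambda^{M-m}$ becomes an identity of the form $A_{m-M}(\Omega)=(\text{expression in }b_0,\dots,b_M,\ \partial_z^{i}\xi_t,\ \text{and }A_s(\Omega)\text{ with }s<m-M)$, and running the induction upward pins down $A_1(\Omega),A_2(\Omega),\dots$ uniquely in terms of $P_{\Omega,z}$ and $A_0(\Omega),\dots,A_{-M}(\Omega)$. Compatibility of the whole recursion with the transformation law (\ref{ModTrans}) of the seeds and with the weight $K$ of $Q_{\Omega,(z,[\gamma])}$ is automatic from Theorem \ref{PAs}(3); as existence of such a $Q$ is part of the hypothesis, nothing further needs to be verified on that side.

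The one point deserving genuine care, rather than mere bookkeeping, is the claim that at each stage the unknown being solved for --- first $b_m$, then $A_{m-M}(\Omega)$ --- appears with an invertible (indeed unit) coefficient; this is precisely where $\xi_0\equiv 1$ and the monic, $\partial_z^{N-1}$-free normalization of $P_{\Omega,z}$ in (\ref{MotherEq}) enter, so I would isolate those leading terms cleanly before invoking the inductions. The rest is multinomial bookkeeping in the displayed identity.
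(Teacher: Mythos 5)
Your recursion is exactly the paper's: expanding (\ref{QPAP2}) in powers of $\lambda$, the coefficient of $\lambda^{M-m}$ is $b_m$ plus lower terms for $m\le M$ (the paper's (\ref{bbxAx})), and $A_{m-M}(\Omega)$ plus lower terms for $m>M$ (the paper's (\ref{AxAx})); the induction and the role of $\xi_0\equiv 1$ are identical. That part of your argument is correct and needs no change.

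What you have omitted is the first half of the paper's proof: the independence of the base point $w$. The relation (\ref{QPAP2}) is stated for the particular Baker--Akhiezer function $\Psi(\Omega,(z,[\gamma]),w,\lambda)$, whose coefficients $\xi_t$ are normalized by $\xi_s(\Omega,(w,[id]),w)=0$ and therefore genuinely depend on $w$. Your recursion consequently expresses each $b_m$ as a polynomial in the $A_j(\Omega)$ and in $z$-derivatives of the $\xi_t(\Omega,(z,[\gamma]),w)$ --- that is, in data that a priori depends on $w$. This proves that for a \emph{fixed} $w$ there is at most one $Q$ satisfying (\ref{QPAP2}), but the theorem asserts that $Q$ is determined ``only by'' $P_{\Omega,z}$ and $A_0(\Omega),\dots,A_{-M}(\Omega)$, with no hidden dependence on the auxiliary choice of $w$. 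The paper closes this by showing first that if $Q_{\Omega,(z,[\gamma]),w}$ and $Q_{\Omega,(z,[\gamma]),w'}$ satisfy (\ref{QPAP2}) and (\ref{QPAP22}) for two base points, then $\Psi(\Omega,(z,[\gamma]),w',\lambda)e^{\lambda(w'-w)}=B(\Omega,\lambda)\Psi(\Omega,(z,[\gamma]),w,\lambda)$ (via Proposition \ref{PropUPAz}) forces $(Q_{\Omega,(z,[\gamma]),w}-Q_{\Omega,(z,[\gamma]),w'})\Psi=0$ for all $\lambda$, hence $Q_{\Omega,(z,[\gamma]),w}=Q_{\Omega,(z,[\gamma]),w'}$ by the standard vanishing argument from the end of the proof of Proposition \ref{PAsz}. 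You should add this step (or some equivalent observation) before running the recursion; once it is in place, the rest of your write-up matches the paper's proof.
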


\begin{proof}
Let $\Psi(\Omega,(z,[\gamma]),w,\lambda)$  be the solution of (\ref{Psi}) for the differential equation $P_{\Omega,z} u = X u$, where $X=\lambda^N$.
Let $\{A_s(\Omega)\}_s$ be the sequence satisfying 
the relation (\ref{AAMod}) 
and 
set
$A(\Omega,\lambda)=\displaystyle \sum_{s=-M}^\infty A_s(\Omega) \lambda^{-s}.$
If there exists
a differential operator $Q_{\Omega,(z,[\gamma]),w}$ satisfying (\ref{QPAP2}),
 then $Q_{\Omega,(z,[\gamma]),w}$ is a differential operator
 of weight $K$ with respect to $\Gamma$
 and commutes with $P_{\Omega,z}$.
 Next,
 taking $w'\in \mathbb{C}-\mathcal{N}_\Omega$ and another solution $\Psi(\Omega,(z,[\gamma]),w',\lambda),$
 we suppose that there is an operator $Q_{\Omega,(z,[\gamma],w')}$ such that
 \begin{align}\label{QPAP22}
 Q_{\Omega,(z,[\gamma]),w'} \Psi(\Omega,(z,[\gamma]),w',\lambda) = A(\Omega,\lambda)  \Psi(\Omega,(z,[\gamma]),w',\lambda).
 \end{align}
 As in Proposition \ref{PropUPAz} and Proposition \ref{PAsz},
 there exists $B(\Omega,\lambda)$ such that
 $\Psi(\Omega,(z,[\gamma]),w',\lambda) e^{\lambda (w'-w)}= B(\Omega,\lambda) \Psi (\Omega,(z,[\gamma]),w,\lambda).$
 Therefore, by (\ref{QPAP2}) and (\ref{QPAP22}),
 \begin{align*}
 &\frac{Q_{\Omega,(z,[\gamma]),w'} \Psi(\Omega,(z,[\gamma]),w,\lambda) }{\Psi(\Omega,(z,[\gamma]),w,\lambda) } = \frac{Q_{\Omega,(z,[\gamma]),w'} B(\Omega,\lambda) \Psi(\Omega,(z,[\gamma]),w,\lambda) e^{\lambda (w-w')} }{B(\Omega,\lambda)\Psi(\Omega,(z,[\gamma]),w,\lambda)e^{\lambda (w-w')} }\\
 &=\frac{Q_{\Omega,(z,[\gamma]),w'} \Psi(\Omega,(z,[\gamma]),w',\lambda) }{\Psi(\Omega,(z,[\gamma]),w',\lambda) }
 =A(\Omega,\lambda) =\frac{Q_{\Omega,(z,[\gamma]),w} \Psi(\Omega,(z,[\gamma]),w,\lambda) }{\Psi(\Omega,(z,[\gamma]),w,\lambda)}.
 \end{align*}
 Therefore,
 we obtain
 $(Q_{\Omega,(z,[\gamma]),w}-Q_{\Omega,(z,[\gamma]),w'})\Psi(\Omega,(z,[\gamma]),w,\lambda)=0.$
 By a similar argument to  the end of the proof of Proposition \ref{PAsz},
 we can see that $Q_{\Omega,(z,[\gamma]),w} = Q_{\Omega,(z,[\gamma]),w'}$.
 Hence,
  a differential operator $Q_{\Omega,(z,[\gamma]),w}$ satisfying  (\ref{QPAP2}) does not depend on the base point $w.$
 So,
 we use the notation $Q_{\Omega,(z,[\gamma])}$ instead of $Q_{\Omega,(z,[\gamma]),w}$.

 Now, we  see that  the differential operator $Q_{\Omega,(z,[\gamma])}$
 and the series $A(\Omega,\lambda)$
  satisfying (\ref{QPAP2})
  are uniquely determined by $P_{\Omega,z}$ and $A_0(\Omega),\cdots, A_{-M}(\Omega)$.
The relation (\ref{QPAP2})
is equal to
\begin{align} \label{bxAx}
&\displaystyle \sum_{s=0}^\infty \displaystyle \sum_{k=0}^M\displaystyle \sum_{\alpha=0}^k \begin{pmatrix}  k \\ \alpha \end{pmatrix} b_{M-k}(\Omega,(z,[\gamma])) \frac{\partial^\alpha}{\partial z^\alpha}\xi_s(\Omega,(z,[\gamma]),w) \lambda^{k-\alpha-s} \notag \\
&= \left(\displaystyle \sum_{t=-M}^\infty A_t (\Omega) \lambda^{-t}\right)\left(\displaystyle \sum_{s=0}^\infty \xi_s(\Omega,(z,[\gamma]),w)\lambda^{-s}\right).
\end{align}
We note that $\{\xi_s\}_s$ is determined only by the given differential operator $P_{\Omega,z}$ by Theorem \ref{ThmPsi} (1).
For any $j\in \{0,1,\cdots,M\}$, 
recalling that $\xi_0\equiv 1 $ and taking the coefficients of the term for $\lambda^{M-j}$ in the equation (\ref{bxAx}),
we have
\begin{align}\label{bbxAx}
&\notag b_{j}(\Omega,(z,[\gamma])) + \Big(\text{a polynomial in } b_k(\Omega,(z,[\gamma])) \hspace{1mm}(k\leq j-1) \text{ and } \frac{\partial^\nu}{\partial z^\nu}\xi_s(\Omega,(z,[\gamma]),w) \hspace{1mm} (\nu\in\mathbb{Z}_{\geq 0})\Big)\\
&= \Big(\text{a polynomial in } A_{t}(\Omega) \hspace{1mm} (t\leq j-M) \text{ and } \xi_s(\Omega,(z,[\gamma]),w)  \Big).
\end{align}
From (\ref{bbxAx}),
we can obtain $b_{j}(\Omega,(z,[\gamma]))$ $(j=0,1,\cdots, M)$ inductively.
This shows that $A_{-M}(\Omega),\cdots, A_0(M)$ and the differential operator $P_{\Omega,z}$ uniquely determine  $Q_{\Omega,(z,[\gamma])}$.
Moreover,
since $\xi_0\equiv 1$ again,
from the coefficients of $\lambda^{-s}$ in the relation (\ref{bxAx}),
we have
\begin{align}\label{AxAx}
& A_s (\Omega) +  \Big(\text{a polynomial in } A_{t}(\Omega) \hspace{1mm} (t < s) \text{ and } \frac{\partial ^\nu}{\partial z^\nu}\xi_s(\Omega,(z,[\gamma]),w) \hspace{1mm} (\nu\in\mathbb{Z}_{\geq 0})\Big) \notag \\
&= \Big(\text{a polynomial in } b_k(\Omega,(z,[\gamma])) \hspace{1mm}(0\leq k\leq M) \text{ and } \xi_s(\Omega,(z,[\gamma]),w) \Big),
\end{align}
for any $s\geq -M$.
Especially, $A_s(\Omega)$ $(s\geq 1)$ are inductively determined by  (\ref{AxAx}).
Here,
we note that such $A_s(\Omega)$ $(s\geq 1)$ do not depend on $z$ and $w$ by virtue of Theorem \ref{PAs} (1).
Therefore, the assertion follows.
\end{proof}

The following theorem 
gives a correspondence 
between 
automorphic forms 
and
differential operators which commutes with $P_{\Omega,z}$
for generic cases of $F\not = \mathbb{Q}$ or $n\geq 2$.

\begin{thm} \label{AutoToDiff}
Suppose $F\not = \mathbb{Q} $ or $n\geq 2$.
Let $P_{\Omega,z}$  of (\ref{MotherEq}) ($Q_{\Omega,(z,[\gamma])}$ of (\ref{Q}), resp.) 
be differential operators
studied  in Theorem  \ref{AutoToDiffZ}.

(1) If $A_j(\Omega)$ $(j=0,\cdots, -M)$ 
are automorphic forms of weight $K+j$ for $\Gamma$,
then any coefficients $b_j(\Omega,(z,[\gamma]))$ of $Q_{\Omega,(z,[\gamma])}$
($A_s(\Omega)$ ($s\geq 1$) of $A(\Omega,\lambda)$, resp.),
which is derived from $P_{\Omega,z}$  and $A_j(\Omega)$ $(j=0,-1,\cdots, -M)$,
give holomorphic functions  $\Omega \mapsto b_j(\Omega,(z,[\gamma]))$ for generic $(z,[\gamma])$
(automorphic forms  of weight $K+s$ for $\Gamma$, resp.).

(2) Conversely,
if every 
coefficient  $b_j(\Omega,(z,[\gamma]))$ of $Q_{\Omega,(z,[\gamma])}$
gives a holomorphic function   $\Omega\mapsto b_j(\Omega,(z,[\gamma]))$ for generic $(z,[\gamma]),$
then $A_s(\Omega) $ $(s\geq -M)$,
which are determined in the sense of Theorem \ref{PAs}, 
are automorphic forms for $\Gamma$.
\end{thm}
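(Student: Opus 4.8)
The plan is to reduce everything to three inputs: the recursive formulas $(\ref{bbxAx})$ and $(\ref{AxAx})$ of Theorem \ref{AutoToDiffZ}, the holomorphy of the $\xi_s$ in $\Omega$ from Theorem \ref{ThmPsi}\,(1), and the transformation law of Theorem \ref{PAs}\,(3), after which Koecher's principle (Remark \ref{RemFor}) disposes of the cusp condition. Since $Q_{\Omega,(z,[\gamma])}$ is, by hypothesis, a differential operator of rank $M$ and weight $K$ with respect to $\Gamma$ commuting with $P_{\Omega,z}$, Theorem \ref{PAs}\,(3) already gives
$$
A_s(\alpha(\Omega)) = j_\alpha(\Omega)^{K+s} A_s(\Omega)
$$
for every $\alpha\in\Gamma$ and every $s\ge -M$; hence each $A_s$ automatically satisfies condition (ii) of Definition \ref{dfAutomorph} with weight $K+s$, and — because $F\neq\mathbb{Q}$ or $n\ge 2$ — condition (iii) holds automatically by Koecher's principle. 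So for both (1) and (2) the only thing left to check is holomorphy on all of $\mathbb{H}_n^g$, and this is exactly where the two hypotheses will be used.

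For (1), I would argue by induction along $(\ref{bbxAx})$ and $(\ref{AxAx})$. The crucial structural observation is that both relations are genuine polynomial identities with no division: in $(\ref{bxAx})$ the coefficient of $b_j$ among the terms of degree $\lambda^{M-j}$ is $\xi_0\equiv 1$, and the coefficient of $A_s$ among the terms of degree $\lambda^{-s}$ is again $\xi_0\equiv 1$. First, for $j=0,1,\dots,M$, relation $(\ref{bbxAx})$ expresses $b_j(\Omega,(z,[\gamma]))$ polynomially in the $b_k$ with $k<j$, in the $\xi_s$ and their $z$-derivatives (holomorphic in $\Omega$ for generic $((z,[\gamma]),w)$ by Theorem \ref{ThmPsi}\,(1)), and in the $A_t$ with $-M\le t\le j-M\le 0$, which are among the given automorphic forms $A_0,\dots,A_{-M}$ and hence holomorphic on $\mathbb{H}_n^g$ by condition (i). Induction on $j$ gives each $b_j$ holomorphic in $\Omega$ for generic $(z,[\gamma])$. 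Then, for $s\ge 1$, relation $(\ref{AxAx})$ expresses $A_s(\Omega)$ polynomially in the $A_t$ with $t<s$, the $b_k$ $(0\le k\le M)$ and the $\xi_s$, all now holomorphic in $\Omega$; induction on $s$ gives $A_s$ holomorphic in $\Omega$ for generic $(z,[\gamma])$, and since $A_s$ is independent of $z$ and $w$ by Theorem \ref{PAs}\,(1), it is holomorphic on $\mathbb{H}_n^g$. Together with the transformation law and Koecher's principle recalled above, each $A_s$ $(s\ge 1)$ is an automorphic form of weight $K+s$ for $\Gamma$.

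For (2), I would run the same induction, but now starting from the hypothesis that every $b_j(\Omega,(z,[\gamma]))$ is holomorphic in $\Omega$ for generic $(z,[\gamma])$. Applying $(\ref{AxAx})$ with $s=-M$, one expresses $A_{-M}(\Omega)$ polynomially in the $b_k$ and the $\xi_s$ only — there is no $A_t$ with $t<-M$ since the series $A(\Omega,\lambda)$ starts at $-M$ — so $A_{-M}$ is holomorphic in $\Omega$ for generic $(z,[\gamma])$; inductively, $A_s(\Omega)$ is holomorphic in $\Omega$ for generic $(z,[\gamma])$ for every $s\ge -M$, and by Theorem \ref{PAs}\,(1) it is a function of $\Omega$ alone, hence holomorphic on $\mathbb{H}_n^g$. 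With the transformation law of Theorem \ref{PAs}\,(3) and Koecher's principle, every $A_s$ $(s\ge -M)$ — in particular the original $A_0,\dots,A_{-M}$ — is an automorphic form for $\Gamma$. The main obstacle I anticipate is purely the holomorphy bookkeeping: one must verify that no denominators are introduced along either recursion (which is precisely the role of the unit $\xi_0\equiv 1$ as the coefficient of the unknown at each step) and that ``holomorphic in $\Omega$ for generic $(z,[\gamma])$'' upgrades to genuine holomorphy for the $A_s$, which is guaranteed by their independence of $z$ and $w$; the transformation behaviour and the growth condition at cusps require no work, the latter being exactly where the assumption $F\neq\mathbb{Q}$ or $n\ge 2$ is needed.
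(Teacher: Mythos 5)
Your proposal is correct and follows essentially the same route as the paper: holomorphy is obtained by induction along the recursions (\ref{bbxAx}) and (\ref{AxAx}) using the holomorphy of the $\xi_s$ from Theorem \ref{ThmPsi}\,(1), the transformation law comes from Theorem \ref{PAs}\,(3), and the cusp condition is vacuous (or automatic via Koecher's principle) since $F\neq\mathbb{Q}$ or $n\geq 2$. Your write-up is in fact slightly more careful than the paper's in making explicit that ``holomorphic for generic $(z,[\gamma])$'' upgrades to holomorphy on all of $\mathbb{H}_n^g$ for the $A_s$ because they are independent of $z$ and $w$.
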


\begin{proof}
(1) Recall Definition \ref{dfAutomorph}.
From the assumption, $A_{j}(\Omega)$ $(k\in \{0,\cdots,-M\})$ are holomorphic function of $\Omega\in \mathbb{H}_n^g$
satisfying the transformation law (\ref{ModTrans}). 
From Theorem \ref{ThmPsi}, $\xi_s(\Omega,(z,[\gamma]),w)$ are holomorphic of $\Omega\in \mathbb{H}_n^g$ for generic $((z,[\gamma]),w)$.
So, due to the construction of $b_j(\Omega,(z,[\gamma]))$ via the relation (\ref{bbxAx}),
$b_j(\Omega,(z,[\gamma]))$ are holomorphic of $\Omega$ for generic $(z,[\gamma])$.  
Also, from (\ref{AxAx}),
$A_s(\Omega) $ $(s\geq 1)$ are also holomorphic in $\Omega$.
Moreover, by Theorem \ref{PAs},
we obtain
$A_s(\Omega)$ $(s\geq 1)$ satisfying the transformation law (\ref{AAMod}).
So,  from Definition   \ref{dfAutomorph}, $A_s(\Omega)$ $(s\geq 1)$  are automorphic forms for $\Gamma$ of weight $K+s$.

(2)
From Theorem \ref{ThmPsi} and Theorem \ref{PAs},
we only need to see that $\Omega \mapsto A_s(\Omega)$
are holomorphic under our assumption.
However,
we can  see this property,
because $A_0(\Omega),\cdots,A_{-M}(\Omega)$ are determined
by $b_j(\Omega,(z,[\gamma]))$ via (\ref{AxAx})
and do not depend on $(z,[\gamma])$ and $w$. 
\end{proof}

\subsection{Commutative differential operators with an action of a symplectic group (exceptional cases of $F= \mathbb{Q}$ and $n =1$)}

In this subsection,
we consider exceptional cases of $F=\mathbb{Q}$ and $n=1$ carefully.
In such cases,
we need to consider the action of $\alpha \in SL(2,\mathbb{Z})$,
because automorphic forms for such exceptional cases
require holomorphic Fourier expansion at cusps in the sense of 
 Definition \ref{dfAutomorph} (iii).

Recall that
the set of poles of $P_{\Omega,z}$ of (\ref{MotherEq}) is 
given by $\mathcal{N}_{\Omega}$.
If the coefficients $a_l(\Omega,z)$ $(l=2,\cdots,N)$ of $P_{\Omega,z}$
satisfy the transformation law (\ref{Trans}) for $\alpha\in SL(2,\mathbb{Z})$,
we have $\mathcal{N}_\Omega = \mathcal{N}_{\alpha (\Omega)}$  for any $\alpha \in SL(2,\mathbb{Z})$
by virtue of Lemma \ref{Lempoles}.
However,
the transformation law (\ref{Trans}) for $\alpha \in SL(2,\mathbb{Z})$  generically does not hold.
So, we need a bit delicate argument for holomorphic Fourier expansions at cusps.
Let $j_\alpha (\Omega) \cdot (\mathbb{C}-\mathcal{N}_{\alpha (\Omega)})$ be the set
$\{j_\alpha (\Omega) z_1| z_1\in \mathbb{C}-\mathcal{N}_{\alpha (\Omega)} \}$.
If
an arc $\gamma$ is in  $j_\alpha (\Omega) \cdot (\mathbb{C}-\mathcal{N}_{\alpha (\Omega)})$,
then
$\gamma_1 = \frac{\gamma}{j_\alpha (\Omega)}$
is in $\mathbb{C}-\mathcal{N}_{\alpha(\Omega)}$.
We have the following lemma.

\begin{lem}\label{LemFourier}
Suppose $F=\mathbb{Q}$ and $n=1$.
For any $\alpha\in SL(2,\mathbb{Z})$,
we suppose that 
the coefficients $a_l(\Omega,z)$ $(l=2,\cdots,N)$
have the holomorphic Fourier expansion at cusps:
 \begin{align}\label{Fouriera}
  \displaystyle 
 j_\alpha (\Omega)^{-l} a_l\Big(\alpha(\Omega), \frac{z}{j_\alpha (\Omega)} \Big)=\sum_{k\geq 0} \tilde{a}_{l,\alpha,k} (z) \exp\left(\frac{2\pi \sqrt{-1} k \Omega}{N_{l,\alpha}}\right).
 \end{align}
 where $\tilde{a}_{l,\alpha,k} (z)$ are holomorphic functions of $z\in \mathbb{C}-\mathcal{N}_{\alpha(\Omega)}$ and $N_{l,k}\in \mathbb{Z}_{>0}.$
 Here `holomorphic' means that the expression (\ref{Fouriera}) does not contain any terms for $k<0$.
Then,
 the coefficients $\xi_s$ of the multivalued Baker-Akhiezer function $\Psi$ of (\ref{Psi})
has a holomorphic Fourier expansion  at cusps:
\begin{align}\label{Fourierxi}
j_\alpha(\Omega)^{-s}\xi_s\left(\alpha(\Omega),\left(\frac{z}{j_\alpha(\Omega)},\left[\frac{\gamma}{j_\alpha (\Omega)} \right]\right),\frac{w}{j_\alpha (\Omega)}\right)=\displaystyle \sum_{k\geq 0} \tilde{\xi}_{s,\alpha,k}\left(\left(z,[\gamma]\right),w\right)\exp \left(\frac{2\pi \sqrt{-1} k\Omega}{N_{s,\alpha}}\right),
\end{align}
where
$\tilde{\xi}_{s,\alpha,k}\left(\left(z,[\gamma]\right),w\right)$ are multivalued function on $j_\alpha (\Omega) \cdot (\mathbb{C}-\mathcal{N}_{\alpha (\Omega)})$.
\end{lem}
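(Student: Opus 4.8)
The plan is to reduce the assertion to the inductive construction of the multivalued Baker--Akhiezer function carried out in the proof of Theorem \ref{ThmPsi}\,(1), applied to a ``twisted'' operator rather than to $P_{\Omega,z}$ itself. Fix $\alpha\in SL(2,\mathbb{Z})$ and set $b_l(\Omega,z)=j_\alpha(\Omega)^{-l}a_l\big(\alpha(\Omega),z/j_\alpha(\Omega)\big)$ for $l=2,\dots,N$; by hypothesis (\ref{Fouriera}) each $b_l$ is, for fixed $\Omega$, holomorphic in $z$ on $j_\alpha(\Omega)\cdot(\mathbb{C}-\mathcal{N}_{\alpha(\Omega)})$ and, as a function of $\Omega$, holomorphic for generic $z$ and equipped with a holomorphic Fourier expansion in $\exp(2\pi\sqrt{-1}\,\Omega/N_{l,\alpha})$; in particular Theorem \ref{ThmPsi}\,(1) applies to the operator $\widetilde P_{\Omega,z}=\partial^N/\partial z^N+b_2(\Omega,z)\,\partial^{N-2}/\partial z^{N-2}+\cdots+b_N(\Omega,z)$. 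Using the chain rule $\partial/\partial z_1=j_\alpha(\Omega)\,\partial/\partial z$ with $z_1=z/j_\alpha(\Omega)$, a direct computation gives $j_\alpha(\Omega)^{-N}P_{\alpha(\Omega),z_1}=\widetilde P_{\Omega,z}$; note that, unlike (\ref{WeightDiff}), this identity does not use the transformation law (\ref{Trans}). I would then transport the normalized solution $\Psi$ of $P_{\alpha(\Omega),z_1}u=(j_\alpha(\Omega)\lambda)^Nu$ through the substitution (\ref{11111}): since $(j_\alpha(\Omega)\lambda)(z_1-w_1)=\lambda(z-w)$ and $(j_\alpha(\Omega)\lambda)^{-s}=j_\alpha(\Omega)^{-s}\lambda^{-s}$, the resulting series has the shape (\ref{Psi}) and satisfies the normalization (\ref{ConditionPsi}) for $\widetilde P_{\Omega,z}$, so by the uniqueness part of Theorem \ref{ThmPsi}\,(1) it equals the Baker--Akhiezer function of $\widetilde P_{\Omega,z}$. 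Hence its $s$-th coefficient $\widetilde\xi_s(\Omega,(z,[\gamma]),w)$ coincides with $j_\alpha(\Omega)^{-s}\xi_s\big(\alpha(\Omega),(z/j_\alpha(\Omega),[\gamma/j_\alpha(\Omega)]),w/j_\alpha(\Omega)\big)$, i.e. with the left-hand side of (\ref{Fourierxi}). Thus it suffices to prove that the coefficients $\widetilde\xi_s$ of $\widetilde P_{\Omega,z}$ are holomorphic at the cusp once the $b_l$ are.

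For this last step I would observe that the class of functions admitting an expansion $\sum_{k\ge 0}c_k(z)\exp(2\pi\sqrt{-1}\,k\Omega/N)$, with $c_k$ holomorphic (possibly multivalued) in $z$ on $j_\alpha(\Omega)\cdot(\mathbb{C}-\mathcal{N}_{\alpha(\Omega)})$ and with no terms of negative index, is closed under the three operations occurring in the recursion (\ref{Induction}): finite sums and products (after replacing the various widths $N$ by a common multiple), differentiation $\partial/\partial z$ (which acts termwise and preserves holomorphy of the coefficients in $z$), and integration $\int_\gamma(\cdot)\,dz$ along an arc $\gamma$ in the $z$-plane (also termwise, the interchange of sum and integral being legitimate by uniform convergence of the Fourier series on compact subsets for ${\rm Im}(\Omega)$ large). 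Granting this, I argue by induction on $s$: $\widetilde\xi_0\equiv 1$ is trivially holomorphic at the cusp; if $\widetilde\xi_0,\dots,\widetilde\xi_{s-1}$ are, then the recursion (\ref{Induction}) written for $\widetilde P_{\Omega,z}$ expresses $N\,\partial\widetilde\xi_s/\partial z$ as a polynomial over $\mathbb{Z}$ in the $\partial^\nu\widetilde\xi_l$ $(l<s)$ and the $b_j$, hence is holomorphic at the cusp, and integrating from $w$ along $\gamma$ together with the normalization $\widetilde\xi_s(\Omega,(w,[id]),w)=0$ shows that $\widetilde\xi_s$ is as well. Reading off the Fourier coefficients of $\widetilde\xi_s$ then yields (\ref{Fourierxi}), the coefficients $\widetilde\xi_{s,\alpha,k}((z,[\gamma]),w)$ being multivalued on $j_\alpha(\Omega)\cdot(\mathbb{C}-\mathcal{N}_{\alpha(\Omega)})$ precisely because the recursion is integrated over arcs in that set.

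The part I expect to be the main obstacle is not conceptual but analytic bookkeeping: one must track how the ``widths'' $N_{l,\alpha}$ combine along the recursion (so as to exhibit an admissible $N_{s,\alpha}$ for $\widetilde\xi_s$, e.g. a common multiple of the $N_{l,\alpha}$ and of the widths of $\widetilde\xi_{l'}$ with $l'<s$), and one must justify the termwise integration in $z$ — this needs the Fourier series of the $b_l$ and of all intermediate quantities to converge uniformly on products of a compact subset of $j_\alpha(\Omega)\cdot(\mathbb{C}-\mathcal{N}_{\alpha(\Omega)})$ with a punctured neighbourhood of the cusp, so that the recursively generated series inherit both convergence and holomorphy of their coefficients in $z$. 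A secondary point deserving care is the reduction step itself: one should check explicitly that the substitution (\ref{11111}) carries the normalized Baker--Akhiezer solution of $P_{\alpha(\Omega),z_1}$ to a series genuinely of the form (\ref{Psi}) obeying (\ref{ConditionPsi}) — in particular that a trivial arc is sent to a trivial arc and that the leading coefficient remains $1$ — so that the uniqueness invoked above really applies.
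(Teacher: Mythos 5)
Your proposal is correct and follows essentially the same route as the paper: an induction on $s$ via the recursion (\ref{Induction}), normalizing each quantity by the appropriate power of $j_\alpha(\Omega)$ (your $b_l$ and $\widetilde\xi_s$ are exactly the paper's $j_\alpha(\Omega)^{-l}a_l(\alpha(\Omega),z/j_\alpha(\Omega))$ and $j_\alpha(\Omega)^{-s}\xi_s(\Omega_1,(z_1,[\gamma_1]),w_1)$), and integrating along arcs in $j_\alpha(\Omega)\cdot(\mathbb{C}-\mathcal{N}_{\alpha(\Omega)})$. Your packaging of the normalized quantities as the Baker--Akhiezer coefficients of the twisted operator $\widetilde P_{\Omega,z}=j_\alpha(\Omega)^{-N}P_{\alpha(\Omega),z_1}$, identified via the uniqueness in Theorem \ref{ThmPsi}(1), is only a cleaner bookkeeping device replacing the paper's explicit appeal to the homogeneity of the polynomial $H_{s_0}$.
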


\begin{proof}
For $\alpha\in SL(2,\mathbb{Z})$,
we set 
$(\Omega_1,(z_1,[\gamma_1]),w_1)=\Big(\alpha(\Omega), \Big(\frac{z}{j_\alpha (\Omega)}, \Big[\frac{\gamma}{j_\alpha(\Omega)}\Big]\Big),\frac{w}{j_\alpha (\Omega)}\Big)$.
We prove the existence of the holomorphic Fourier expansions (\ref{Fourierxi})  of $\xi_s$ by a induction for $s$.

If $s=0$, it is trivial.
If $s=1$,
$\xi_1 (\Omega_1,(z_1,[\gamma_1]),w_1)$
is determined by the integration of the relation
\begin{align}\label{eta_1}
N\frac{\partial}{\partial z_1}\eta_1 (\Omega_1,z_1) = -a_2 (\Omega_1,z_1)
\end{align}
on the arc $\gamma_1\subset \mathbb{C}-\mathcal{N}_{\Omega_1}$
(recall the proof of Theorem \ref{ThmPsi}).
Dividing (\ref{eta_1}) by $j_\alpha (\Omega)^2$,
 considering the relation $\frac{\partial}{\partial z_1}=j_\alpha (\Omega) \frac{\partial}{\partial z}$
and  using the assumption (\ref{Fouriera}),
we have
\begin{align}\label{za2F}
N\frac{\partial}{\partial z} j_\alpha (\Omega)^{-1} \eta_1 \Big(\alpha(\Omega),\frac{z}{j_\alpha (\Omega)}\Big) 
= - \frac{a_2 (\Omega_1,z_1)}{j_\alpha (\Omega)^2} 
= -\sum_{k\geq 0} \tilde{a}_{2,\alpha,k} (z) \exp\left(\frac{2\pi \sqrt{-1} k \Omega}{N_{2,\alpha}}\right).
\end{align}
By integrating (\ref{za2F}) on the arc $\gamma \subset j_\alpha (\Omega) \cdot (\mathbb{C}-\mathcal{N}_{\alpha (\Omega)})$,
we have the holomorphic Fourier expansion at cusps for $\xi_1$ of (\ref{Fourierxi}).
 
 Next,
 assume that we have the holomorphic Fourier expansion (\ref{Fourierxi})  of $\xi_s$ for $s=0,1,\cdots, s_0-1$ $(s_0\geq 1)$.
 We will obtain the holomorphic  Fourier expansion  of  $\xi_{s_0}$.
 By the proof of Theorem \ref{ThmPsi} (1),
 especially the relation (\ref{Induction}),
 $\xi_{s_0} (\Omega_1,(z_1,[\gamma_1]),w_1)$ is given by the integration of the relation 
 \begin{align}\label{etas0Int}
 N\frac{\partial}{\partial z_1}\eta_{s_0} (\Omega_1,z_1) = H_{s_0} \Big(\frac{\partial^\nu}{\partial z_1^\nu} \xi_m (\Omega_1,(z_1,[\gamma_1]),w_1 ), a_l(\Omega_1,z_1) \Big).
 \end{align}
 Here,  $H_{s_0} \Big(\frac{\partial^\nu}{\partial z_1^\nu} \xi_m (\Omega_1,(z_1,[\gamma_1]),w_1 ), a_l(\Omega_1,z_1) \Big)$ 
 is a polynomial in 
 $\frac{\partial^\nu}{\partial z_1^\nu} \xi_m (\Omega_1,(z_1,[\gamma_1]),w_1 )$ $(m\leq s_0-1 , \nu \in \mathbb{Z}_{\geq 0})$ 
 and
 $a_l(\Omega_1,z_1)$ $(l=2,\cdots,N)$.
 By Theorem \ref{ThmPsi} (3),
 the polynomial $H_{s_0}$ is homogeneous of weight $s_0 +1$ 
 with respect to the action of $\Gamma$.
 This implies that,
 by dividing (\ref{etas0Int}) by $j_\alpha (\Omega)^{s_0 +1}$   for $\alpha \in SL(2,\mathbb{Z})$,
the relation
\begin{align} \label{etas0Intmae}
N \frac{\partial}{\partial z} j_\alpha(\Omega)^{-s_0} \eta_{s_0} \Big(\alpha(\Omega),\frac{z}{j_\alpha (\Omega)}\Big)
=H_{s_0}\Big(\frac{\partial^\nu}{\partial z^\nu} j_\alpha (\Omega)^{-m}\xi_m (\Omega_1,(z_1,[\gamma_1]),w_1 ),j_\alpha(\Omega)^{-l} a_l(\Omega,z)\Big),
 \end{align}
holds  similarly to the (\ref{za2F}).
 By the assumption,
 the right hand side of (\ref{etas0Intmae})
 has the holomorphic Fourier expansion at cusps.
 So, by the integration of (\ref{etas0Intmae}) on the arc $\gamma \subset j_\alpha (\Omega) \cdot (\mathbb{C}-\mathcal{N}_{\alpha (\Omega)})$,
 we have the holomorphic Fourier expansion (\ref{Fourierxi}) at cusps for $s_0$.
 
Hence, the assertion is proved.
\end{proof}

\begin{rem}
If $\Gamma=SL(2,\mathbb{Z})$,
 the relation (\ref{Trans}) holds for any $\alpha \in SL(2,\mathbb{Z})$.
 Then,
 from Lemma \ref{Lempoles},
 $j_\alpha (\Omega) \cdot (\mathbb{C}-\mathcal{N}_{\alpha(\Omega)}) = (\mathbb{C}-\mathcal{N}_{\Omega})$
 holds.
 So,  in this case,
 we only need to consider multivalued functions on $\mathbb{C}-\mathcal{N}_\Omega$.
 However, 
 if $\Gamma \not = SL(2,\mathbb{Z})$,
  we need a detailed condition as we saw in Lemma \ref{LemFourier}.  
\end{rem}

\begin{thm}\label{AutoToDiffE}
Suppose $F=\mathbb{Q}$ and $n=1$.
Let $P_{\Omega,z}$  of (\ref{MotherEq}) ($Q_{\Omega,(z,[\gamma])}$ of (\ref{Q}), resp.) 
be differential operators
studied  in Theorem  \ref{AutoToDiffZ}.
Moreover, assume that 
every coefficient $a_l(\Omega,z)$ $(l=2,\cdots,N)$
of the differential operator $P_{\Omega,z}$ of (\ref{MotherEq})
has a holomorphic Fourier expansion at cusps in the form (\ref{Fouriera}).

(1) If $A_j(\Omega)$ $(j=0,-1,\cdots, -M)$ 
are automorphic forms of weight $K+j$ for $\Gamma$,
then any coefficients $b_j(\Omega,(z,[\gamma]))$ of $Q_{\Omega,(z,[\gamma])}$,
which are derived from $A_j(\Omega)$ $(j=0,-1,\cdots, -M)$ in the sense of Theorem \ref{AutoToDiffZ},
give holomorphic functions  of $\Omega \mapsto b_j(\Omega,(z,[\gamma]))$ for generic $(z,[\gamma])$.
Moreover, for $\alpha \in SL(2,\mathbb{Z})$,
every coefficient $b_j(\Omega,(z,[\gamma]))$ of $Q_{\Omega,(z,[\gamma])}$
has a holomorphic Fourier expansion
\begin{align}\label{Fourierb}
j_{\alpha}(\Omega)^{-K-j+M} b_j\Big(\alpha(\Omega),\Big(\frac{z}{j_\alpha (\Omega)}, \Big[ \frac{\gamma}{j_\alpha (\Omega)} \Big]\Big)\Big)
=\sum_{k\geq 0} \tilde{b}_{j,\alpha,k} (z,[\gamma]) \exp \Big(\frac{2\pi \sqrt{-1}k \Omega}{N_{j,\alpha}}\Big),
\end{align}
where $\tilde{b}_{j,\alpha,k} (\Omega,(z,[\gamma]))$ are multivalued analytic function on $ j_{\alpha} (\Omega) \cdot (\mathbb{C}-\mathcal{N}_{\alpha (\Omega)})$.
Furthermore,
 $A_s(\Omega)$ $(s\geq 1)$ are automorphic forms  of weight $K+s$ for $\Gamma$.

(2) Conversely,
we suppose that
 every coefficient $b_j (\Omega,(z,[\gamma]))$ 
of $Q_{\Omega,(z,[\gamma])}$
is a holomorphic function of $\Omega$ 
and
has a holomorphic Fourier expansion  (\ref{Fourierb})  at cusps.
Then,
$A_s(\Omega) $ $(s\geq -M)$,
which are determined in the sense of Theorem \ref{PAs}, 
are automorphic forms for $\Gamma$.
\end{thm}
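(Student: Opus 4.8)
The plan is to re-run the inductive arguments of the proof of Theorem \ref{AutoToDiff} essentially verbatim for the holomorphy and the transformation law, and then to supplement them with a growth‑at‑cusps bookkeeping built on Lemma \ref{LemFourier}. For (1), the holomorphy of $\Omega\mapsto b_j(\Omega,(z,[\gamma]))$ for generic $(z,[\gamma])$, the holomorphy of the $A_s(\Omega)$ for $s\geq 1$, and the transformation law \eqref{AAMod} for the $A_s$, are obtained exactly as in Theorem \ref{AutoToDiff} (1): the $b_j$ are built inductively from $P_{\Omega,z}$ and $A_0(\Omega),\dots,A_{-M}(\Omega)$ via \eqref{bbxAx}, the $\xi_s$ are holomorphic in $\Omega$ by Theorem \ref{ThmPsi} (1), the $A_s$ $(s\geq 1)$ come out of \eqref{AxAx}, and Theorem \ref{PAs} (3) supplies \eqref{AAMod}. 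What is genuinely new is the holomorphic Fourier expansion \eqref{Fourierb}.

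To handle that, I would first fix $\alpha\in SL(2,\mathbb{Z})$ and isolate the relevant class of functions: call a function on $\mathbb{H}\times\big(j_\alpha(\Omega)\cdot(\mathbb{C}-\mathcal{N}_{\alpha(\Omega)})\big)$ \emph{admissible} if it admits an expansion $\sum_{k\geq 0}c_k(z,[\gamma])\exp(2\pi\sqrt{-1}\,k\Omega/N)$ for some $N\in\mathbb{Z}_{>0}$, with the $c_k$ multivalued analytic in $z$. The only structural facts needed are that admissible functions are closed under sums, products and $\partial/\partial z$ (after passing to a common $N$). By hypothesis the functions $j_\alpha(\Omega)^{-l}a_l(\alpha(\Omega),z/j_\alpha(\Omega))$ are admissible (formula \eqref{Fouriera}), hence by Lemma \ref{LemFourier} so are the $j_\alpha(\Omega)^{-s}\xi_s(\alpha(\Omega),\cdot)$; and since each $A_j(\Omega)$ $(j=0,\dots,-M)$ is an automorphic form, $j_\alpha(\Omega)^{-K-j}A_j(\alpha(\Omega))$ is admissible by Definition \ref{dfAutomorph} (iii). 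Substituting the transformation \eqref{11111} into the recursion \eqref{bbxAx} and using that this relation is weight‑homogeneous with respect to the action of $\Gamma$ (proved exactly as the homogeneity of $H_{s_0}$ in Lemma \ref{LemFourier}), dividing through by $j_\alpha(\Omega)^{K+j-M}$ exhibits $j_\alpha(\Omega)^{-K-j+M}b_j(\alpha(\Omega),(z/j_\alpha(\Omega),[\gamma/j_\alpha(\Omega)]))$ as a polynomial in admissible quantities, hence admissible; this is precisely \eqref{Fourierb}. The same substitution applied to \eqref{AxAx} shows $j_\alpha(\Omega)^{-K-s}A_s(\alpha(\Omega))$ is admissible for every $s\geq 1$; since $A_s$ does not depend on $z$, the coefficients of its $q$‑expansion must be constants, which is the holomorphic Fourier expansion at cusps required by Definition \ref{dfAutomorph} (iii). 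Together with the holomorphy and the transformation law already in hand, $A_s$ is an automorphic form of weight $K+s$, proving (1).

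For (2), I would argue as in Theorem \ref{AutoToDiff} (2): the transformation law \eqref{AAMod} and the holomorphy of $\Omega\mapsto A_s(\Omega)$ follow from Theorem \ref{ThmPsi}, Theorem \ref{PAs}, and the fact that $A_{-M}(\Omega),\dots,A_0(\Omega)$ and the $A_s$ $(s\geq 1)$ are all determined from the $b_j$ by \eqref{AxAx}, which depends on neither $(z,[\gamma])$ nor $w$. For the cusp condition, feed the assumed admissibility of the $j_\alpha(\Omega)^{-K-j+M}b_j(\alpha(\Omega),\cdot)$ (hypothesis \eqref{Fourierb}) together with the admissibility of the $\xi_s$ from Lemma \ref{LemFourier} into \eqref{AxAx}, using again its weight‑homogeneity under \eqref{11111}, to conclude that $j_\alpha(\Omega)^{-K-s}A_s(\alpha(\Omega))$ is admissible; $z$‑independence of $A_s$ then forces the $q$‑coefficients to be constant. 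Hence each $A_s$ satisfies Definition \ref{dfAutomorph} (i)--(iii) and is an automorphic form.

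The main obstacle I anticipate is bookkeeping rather than conceptual: one must verify that under \eqref{11111} every term occurring in the recursions \eqref{bbxAx} and \eqref{AxAx} scales by exactly the power of $j_\alpha(\Omega)$ dictated by its weight, so that the single overall factor $j_\alpha(\Omega)^{-K-j+M}$ (resp. $j_\alpha(\Omega)^{-K-s}$) clears all of them at once, and that the class of admissible series is genuinely stable under the finitely many sums, products and $z$‑differentiations appearing there. Both points are the exact analogues of what Lemma \ref{LemFourier} does for $\xi_s$; the only extra care is that when $\Gamma\neq SL(2,\mathbb{Z})$ the domain $j_\alpha(\Omega)\cdot(\mathbb{C}-\mathcal{N}_{\alpha(\Omega)})$ depends on $\alpha$, so the Fourier coefficients $\tilde b_{j,\alpha,k}$ and $\tilde\xi_{s,\alpha,k}$ must be kept on that $\alpha$‑dependent set throughout.
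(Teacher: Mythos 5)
Your proposal is correct and follows essentially the same route as the paper: holomorphy and the transformation laws are inherited from Theorem \ref{AutoToDiff}, and the cusp expansions are obtained by inductively feeding the hypotheses (\ref{Fouriera}), Lemma \ref{LemFourier}, and Definition \ref{dfAutomorph} (iii) into the weight-homogeneous recursions (\ref{bbxAx}) and (\ref{AxAx}) after the substitution (\ref{11111}); your ``admissible'' class is just a packaging of the closure properties the paper uses implicitly. (Your normalization $j_\alpha(\Omega)^{-K-s}A_s(\alpha(\Omega))$ is the one consistent with weight $K+s$, and your remark that $z$-independence of $A_s$ forces constant Fourier coefficients is a small but correct addition.)
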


\begin{proof}
(1) Under the assumption,
as in  the proof of  Theorem \ref{AutoToDiff},
we can see that $\Omega \mapsto b_j(\Omega,(z,[\gamma]))$ are holomorphic for generic $(z,[\gamma])$. 
We prove that $b_j (\Omega,(z,[\gamma]))$ have  holomorphic Fourier expansions (\ref{Fourierb}) for any $\alpha \in SL(2,\mathbb{Z})$.
The coefficients $b_j (\Omega,(z,[\gamma]))$
are determined by the relation (\ref{bbxAx}) inductively.
For $\alpha\in SL(2,\mathbb{Z})$ and $t=0,\cdots, -M$,
we have the holomorphic Fourier expansion
\begin{align}\label{FourierA}
j_\alpha (\Omega)^{-t } A_t(\Omega)=\sum_{k\geq 0} \tilde{A}_{t,\alpha ,k }\exp\Big(\frac{2 \pi \sqrt{-1} \Omega}{N_{t,\alpha}}\Big)
\end{align}
by the assumption.
We set 
$(\Omega_1,(z_1,[\gamma_1]),w_1)=\Big(\alpha(\Omega), \Big(\frac{z}{j_\alpha (\Omega)}, \Big[\frac{\gamma}{j_\alpha(\Omega)}\Big]\Big),\frac{w}{j_\alpha (\Omega)}\Big)$.
From (\ref{bbxAx}),
it holds that
\begin{align}\label{bbxAxF}
b_j(\Omega_1,(z_1,[\gamma_1])) =
H^b_j\Big(  b_m(\Omega_1,(z_1,[\gamma_1])) , \frac{\partial^\nu}{\partial z_1^\nu}  \xi_s(\Omega_1,(z_1,[\gamma_1]),w_1)  ,  A_t(\Omega_1) \Big),
\end{align}
where $H^b_j (  b_m(\Omega_1,(z_1,[\gamma_1])) , \frac{\partial^\nu}{\partial z_1^\nu}  \xi_s(\Omega_1,(z_1,[\gamma_1]),w_1)  ,  A_t(\Omega_1) )$ is a polynomial 
in
$b_m(\Omega_1,(z_1,[\gamma_1]))$ $(m< j)$, $ \frac{\partial^\nu}{\partial z_1^\nu}  \xi_s(\Omega_1,(z_1,[\gamma_1]),w_1) $  $(s,\nu \in \mathbb{Z}_{\geq 0})$ and $A_{t}(\Omega_1)$ $(t = -M ,\cdots,0)$.
By virtue of Theorem \ref{PAs} (3),
(\ref{bbxAxF}) is homogeneous of weight $K+j-M$ with respect to the action of $\Gamma$.
This implies that, by dividing  (\ref{bbxAxF}) by $j_\alpha (\Omega)^{K+j-M}$   for $\alpha \in SL(2,\mathbb{Z})$ 
and considering $\frac{\partial}{\partial z_1}=j_\alpha (\Omega) \frac{\partial}{\partial z}$,
we  obtain
\begin{align}\label{bFourier}
&j_{\alpha} (\Omega)^{-K-j+M}  b_j(\Omega_1,(z_1,[\gamma_1]))  \notag \\
&=
H^b_j\Big( j_{\alpha} (\Omega)^{-K-m+M}  b_m(\Omega_1,(z_1,[\gamma_1])) , \frac{\partial^\nu}{\partial z^\nu} j_\alpha (\Omega)^{-s} \xi_s(\Omega_1,(z_1,[\gamma_1]),w_1)  , j_{\alpha} (\Omega)^{-t} A_t(\Omega_1) \Big).
\end{align}
By virtue of the assumption and Lemma \ref{LemFourier},
$  \frac{\partial^\nu}{\partial z^\nu} j_\alpha (\Omega)^{-s} \xi_s(\Omega_1,(z_1,[\gamma_1]),w_1)$ ( $ j_{\alpha} (\Omega)^{-t} A_t(\Omega_1) $ , resp.)  have Fourier expansions (\ref{Fourierxi}) ((\ref{FourierA}), resp.).
So, we can inductively obtain the Fourier expansions (\ref{Fourierb}) of $j_{\alpha} (\Omega)^{-K-j+M}  b_j(\Omega_1,(z_1,[\gamma_1])) $.

Next,
 we will consider the Fourier expansion of $A_s(\Omega)$ for $s\geq 1$.
By (\ref{AxAx}),
we obtain 
\begin{align}\label{Ha}
A_s(\Omega_1) = H^{a}_s \Big(  b_j(\Omega_1,(z_1,[\gamma_1])) , \frac{\partial^\nu}{\partial z_1^\nu}  \xi_s(\Omega_1,(z_1,[\gamma_1]),w_1)  ,  A_t(\Omega_1) \Big),
\end{align}
where $H^{a}_s \Big(  b_j(\Omega_1,(z_1,[\gamma_1])) , \frac{\partial^\nu}{\partial z_1^\nu}  \xi_s(\Omega_1,(z_1,[\gamma_1]),w_1)  , j_{\alpha} (\Omega)^{-t} A_t(\Omega_1) \Big)$ is a polynomial in 
$b_j(\Omega_1,(z_1,[\gamma_1])) $ $(0\leq j \leq M)$, 
$\frac{\partial^\nu}{\partial z_1^\nu}  \xi_s(\Omega_1,(z_1,[\gamma_1]),w_1)$ $(s,\nu \in \mathbb{Z}_{\geq 0})$
and
$A_t(\Omega_1)$ $(t<s)$.
We can see that the polynomial is homogeneous of weight $s$ under the action of $SL(2,\mathbb{Z})$ also.
Therefore, dividing (\ref{Ha}) by $j_\alpha (\Omega)^{s}$ ($\alpha \in SL(2,\mathbb{Z})$) and using  $\frac{\partial}{\partial z_1}=j_\alpha (\Omega) \frac{\partial}{\partial z}$,
we have
\begin{align}
&j_{\alpha}(\Omega)^{-s}  A_s(\Omega_1) \notag \\
&= H^{a}_s \Big( j_{\alpha} (\Omega)^{-K-j+M}  b_j(\Omega_1,(z_1,[\gamma_1])) , \frac{\partial^\nu}{\partial z^\nu} j_\alpha (\Omega)^{-s} \xi_s(\Omega_1,(z_1,[\gamma_1]),w_1)  , j_{\alpha} (\Omega)^{-t} A_t(\Omega_1) \Big).
\end{align}
So, 
we can also obtain the Fourier expansions of $j_{\alpha}(\Omega)^{-s}  A_s(\Omega) $ inductively.

(2)
We only need to obtain the holomorphic Fourier expansions of $A_s(\Omega)$ $(s\geq -M)$ at cusps.
By the
same argument with  the latter of the proof of (1),
we can obtain the Fourier expansion of $j_\alpha(\Omega)^{-s}A_s(\Omega)$ inductively
from the Fourier expansion (\ref{Fourierb}).
\end{proof}

\subsection{A formulation via a ring of generating functions}

In this subsection,
we will give an interpretation of Theorem \ref{PAs}, \ref{AutoToDiffZ}, \ref{AutoToDiff} and \ref{AutoToDiffE}  using a ring of generating functions
for sequences of automorphic forms.

Let $\Gamma \subset Sp(n,F)$
be a congruence subgroup
and
$\mathcal{M}_K (\Gamma)$ be the vector space of automorphic forms for $\Gamma$ of weight $K$.
It is well-known that $\mathcal{M}_K (\Gamma) =\{0\}$ if $K<0.$
Let $\mathcal{M}(\Gamma) = \displaystyle \bigoplus _{K=0}^\infty \mathcal{M}_K (\Gamma)$ be the graded ring of automorphic forms for $\Gamma$.
Let $V$ be the ring of formal Laurent series in $\lambda^{-1}$ over $\mathcal{M}(\Gamma)$:
$
V=\Big\{\displaystyle \sum_{s=-M}^\infty A_s(\Omega) \lambda^{-s} \Big| M\in \mathbb{Z}_{\geq 0}, A_s(\Omega) \in \mathcal{M}(\Gamma)  \Big\}.
$
We take a subspace $R_K $ of $V$ defined by
$
R_K= \Big\{ \displaystyle \sum_{s=-M}^\infty A_s(\Omega) \lambda^{-s} \in V \Big| A_s(\Omega) \in \mathcal{M}_{s+K} (\Gamma)  \Big\}.
$
Then,
$\displaystyle
R= \bigoplus_{K=0}^\infty R_K
$
is a  graded ring.
The ring $R$ can be regarded as a ring of generating functions for  sequences $\{A_s(\Omega)\}$ $(A_s(\Omega)\in \mathcal{M}_{s+K} (\Gamma))$ of automorphic forms.
We note that $R_0$ gives a subring of $R$.

Now,
we define a vector space $\mathcal{D}^P_K$ and a ring $\mathcal{D}^P$ of differential operators.
Let $P=P_{\Omega,z}$ of (\ref{MotherEq}) be a differential operator of weight $N$ with respect to $\Gamma$.
However,
if $F=\mathbb{Q}$ and $n=1$,
we additionally assume that the coefficients $a_l(\Omega,z)$ $(l=0,\cdots,N)$
have holomorphic Fourier expansions  (\ref{Fouriera}) at cusps.

\begin{df}\label{dfDP}
For a fixed congruence subgroup $\Gamma(\subset Sp(n,F))$, set
$$
\tilde{\mathcal{D}}_K^P=\{Q=Q_{\Omega,(z,[\gamma])}\ | Q \text{ is given by } (\ref{Q}); Q \text{ commutes with } P; Q \text{ is of weight } K \text{ with respect to } \Gamma  \}.
$$
Then,

(i)  if $F\not = \mathbb{Q}$ or $n\geq 2$,
set
$$
\mathcal{D}_K^P =\{Q\in \tilde{\mathcal{D}}_K^P| \Omega \mapsto b_j(\Omega,(z,[\gamma])) \text{ are holomorphic for generic} (z,[\gamma]) \}.
$$

(ii) if $F = \mathbb{Q}$ and $n =1$,
set
\begin{align*}
\mathcal{D}_K^P =\{ Q\in \tilde{\mathcal{D}}_K^P| 
& \Omega \mapsto b_j(\Omega,(z,[\gamma])) \text{ are holomorphic for  generic} (z,[\gamma]) ; \\
&b_j(\Omega,(z,[\gamma])) \text { have holomorphic Fourier expansions } (\ref{Fouriera}) \text{ at cusps} \}.
\end{align*}

Set $\displaystyle \mathcal{D}^P=\bigoplus_{K=0}^\infty \mathcal{D}_K^P.$
This is a commutative graded ring (see Theorem \ref{PAs} (2)).
\end{df}

Let $\chi: \mathcal{D}^P \rightarrow R$ be a mapping 
given by
\begin{align}\label{chi1}
Q_{\Omega,(z,[\gamma])} = Q \mapsto A =A (\Omega,\lambda)
\end{align}
if $Q\Psi = A\Psi$ for $\Psi = \Psi(\Omega,(z,[\gamma]),w,\lambda) $ of (\ref{Psi}) for generic $\lambda$.
 Theorem \ref{AutoToDiffZ}  implies that $\chi$ is an injective mapping.
Moreover,
if $Q_1,Q_2 \in \mathcal{D}^P$  and $\chi(Q_j) = A_j $ $(j=1,2)$, 
we have
$(Q_1+Q_2) \Psi = (A_1+A_2)\Psi$ and $(Q_1 Q_2)\Psi = A_1 A_2 \Psi.$
So, 
by a similar argument to the end of the proof of Proposition \ref{PAsz},
we can see that
$\chi (Q_1+Q_2)=A_1+A_2$, $\chi(Q_1 Q_2)=A_1 A_2 $ and $\chi (1)=1$ hold.
Namely,
$\chi$ gives an embedding $\mathcal{D}^P \hookrightarrow R$ of rings.

\begin{df}\label{dfSP}
Let $S_K ^P$ be the vector space  $ \chi (\mathcal{D}_K^P) (\subset R)$  over $\mathbb{C}$. 
Let  $\displaystyle S^P$ be the graded ring $\displaystyle \chi(\mathcal{D}^P)=\bigoplus_{K=0}^\infty S_K^P$.
\end{df}

For any $A=\displaystyle \sum_{s=-M}^\infty A_s(\Omega) \lambda^{-s}\in S_K^P$ ($A_{-M}(\Omega)\not\equiv 0$),
we put
\begin{align}\label{prin}
Prin (A)=A_{-M}(\Omega)\lambda^{M} + \cdots + A_0(\Omega), \quad \quad (A_{-M}(\Omega)\not\equiv 0).
\end{align}
We call $Prin (A)$ the principal part of $A$.
Set $W_K=\displaystyle Prin(S_K^P)=\bigoplus_{s=0}^K  \mathcal{M}_{K-s}'(\Gamma) \lambda^s$.
Here, $\mathcal{M}_{K-s}' (\Gamma) $ is a subspace of $\mathcal{M}_{K-s}(\Gamma)$.
We have the following reformulation of  our main results.

\begin{thm}\label{ThmRef}
The mapping $\chi:\mathcal{D}^P\rightarrow S^P$ given by (\ref{chi1}) is an isomorphism of graded rings.
For fixed $K$, 
the mapping 
$Prin: S^P_K \rightarrow W_K$ of (\ref{prin}) is an isomorphism of vector spaces over $\mathbb{C}$.
Especially, the sequence
\begin{align*}
\mathcal{D}_K^P \xrightarrow{\chi} S_K^P \xrightarrow{Prin} W_K
\end{align*}
gives  isomorphisms of three vector spaces.
   \end{thm}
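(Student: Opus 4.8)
The plan is to deduce the theorem from the facts already established; the only point carrying genuine content is the injectivity of $Prin$ on each $S_K^P$, everything else being an unwinding of Definitions \ref{dfDP} and \ref{dfSP}.

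First I would verify that $\chi:\mathcal{D}^P\to S^P$ is an isomorphism of graded rings. By the discussion preceding Definition \ref{dfSP}, $\chi$ is a ring homomorphism; its well-definedness — that for $Q\in\mathcal{D}^P$ the coefficients $A_s(\Omega)$ of $A=\chi(Q)$ really are automorphic forms, so that $A\in R$ — is exactly Theorem \ref{AutoToDiff} when $F\neq\mathbb{Q}$ or $n\geq 2$, and Theorem \ref{AutoToDiffE} when $F=\mathbb{Q}$ and $n=1$. This is the only place the case distinction enters, and in the exceptional case it is where the holomorphic Fourier expansion at the cusps built into Definition \ref{dfDP}(ii) is used. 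Injectivity of $\chi$ is the uniqueness clause of Theorem \ref{AutoToDiffZ}: the operator $Q$ is determined by $P$ together with the finite data $A_0,\dots,A_{-M}$. Surjectivity onto $S^P$ and onto each $S_K^P$ holds by definition, since $S^P=\chi(\mathcal{D}^P)$ and $S_K^P=\chi(\mathcal{D}_K^P)$. Finally, $\chi$ is compatible with the gradings: if $Q\in\mathcal{D}_K^P$ then $Q$ has weight $K$, so Theorem \ref{PAs}(3) gives $A_s(\alpha(\Omega))=j_\alpha(\Omega)^{K+s}A_s(\Omega)$, i.e.\ $A=\chi(Q)\in R_K$. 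Hence $\chi$ restricts to a bijection $\mathcal{D}_K^P\to S_K^P$ for every $K$ and is a graded ring isomorphism.

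Second I would show that $Prin:S_K^P\to W_K$ is an isomorphism of $\mathbb{C}$-vector spaces. It is $\mathbb{C}$-linear, and surjective by the very definition $W_K=Prin(S_K^P)$; moreover, since $S_K^P\subset R_K$ forces the coefficient $A_{-j}$ of $\lambda^{j}$ in any $A\in S_K^P$ to lie in $\mathcal{M}_{K-j}(\Gamma)$, which is $\{0\}$ for $j>K$, the principal part of $A$ involves only $\lambda^{0},\dots,\lambda^{K}$, so $W_K\subseteq\bigoplus_{s=0}^{K}\mathcal{M}_{K-s}(\Gamma)\lambda^{s}$, the subspaces $\mathcal{M}'_{K-s}(\Gamma)$ recording exactly which principal parts occur. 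For injectivity, let $A,A'\in S_K^P$ with $Prin(A)=Prin(A')$. If this common value is $0$ then $A=A'=0$, because a nonzero element of $V$ has a highest $\lambda$-power $\lambda^{M}$ with $M\geq 0$, which $Prin$ retains. Otherwise $A$ and $A'$ have the same order $M$ and agree in the coefficients $A_0,\dots,A_{-M}$. Writing $A=\chi(Q)$ and $A'=\chi(Q')$ with $Q,Q'\in\mathcal{D}_K^P$, Theorem \ref{AutoToDiffZ} asserts that the full sequence $\{A_s(\Omega)\}_{s\geq 1}$ is determined by $P$ and $A_0,\dots,A_{-M}$; hence $A_s=A'_s$ for all $s$, so $A=A'$. (Equivalently, $Q=Q'$ by uniqueness in Theorem \ref{AutoToDiffZ}, whence $A=\chi(Q)=\chi(Q')=A'$.) Thus $Prin$ is bijective.

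Composing the two isomorphisms gives the last assertion: $Prin\circ\chi:\mathcal{D}_K^P\to W_K$ is an isomorphism, so $\mathcal{D}_K^P\xrightarrow{\chi}S_K^P\xrightarrow{Prin}W_K$ is a chain of isomorphisms of $\mathbb{C}$-vector spaces. The only step with real mathematical weight is the well-definedness of $\chi$ in the exceptional case $F=\mathbb{Q}$, $n=1$ — ensuring the tail coefficients $A_s(\Omega)$ meet the growth condition at the cusps — and this is precisely what Theorem \ref{AutoToDiffE} and the cusp clause in Definition \ref{dfDP}(ii) are designed to supply; I do not anticipate any further obstacle.
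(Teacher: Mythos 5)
Your proposal is correct and follows essentially the same route as the paper: the paper's own proof is a one‑liner observing that $\chi$ is an isomorphism onto its image by the discussion preceding Definition \ref{dfSP}, and that $Prin$ is bijective by Theorems \ref{AutoToDiff} and \ref{AutoToDiffE} (surjectivity being definitional and injectivity coming from the uniqueness of the tail $\{A_s\}_{s\geq 1}$ given $P$ and the principal part, as in Theorem \ref{AutoToDiffZ}). You simply spell out these same ingredients in more detail, and your attribution of the injectivity of $Prin$ to Theorem \ref{AutoToDiffZ} is, if anything, the more precise citation.
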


\begin{proof}
Due to Theorem \ref{AutoToDiff} and  \ref{AutoToDiffE},
 $Prin$ of (\ref{prin}) is a bijective mapping.
So, $Prin$ is also an isomorphism of vector spaces over $\mathbb{C}$.
\end{proof}

\begin{rem}
We can naturally define $Prin$ on $\displaystyle S^P=\bigoplus_{K=0}^\infty S_K^P.$
However, this does not give a homomorphism of rings.
\end{rem}

From the proof of  Theorem \ref{AutoToDiffZ} and \ref{AutoToDiff}, we can see the following.

\begin{cor}
An operator $Q\in \mathcal{D}_K^P$ is of rank $M$ if and only if $Prin (\chi (Q))$ is given by the form (\ref{prin}).
 Moreover, the leading coefficient of $Q\in \mathcal{D}_K^P$ is a constant number $c$
 if and only if $M=K$ and
 $A_{-K}(\Omega)\in \mathcal{M}_0(\Gamma)$ is given by $A_{-K}(\Omega) \equiv c.$
\end{cor}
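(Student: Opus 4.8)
The plan is to reduce both equivalences to the single identity $b_0(\Omega,(z,[\gamma])) = A_{-M}(\Omega)$ linking the leading coefficient of $Q$ to the leading Laurent coefficient of $\chi(Q)$, which falls out of comparing top-order terms in (\ref{bxAx}). Concretely, write $Q=Q_{\Omega,(z,[\gamma])}$ for an operator of rank $M$ and $\chi(Q)=A(\Omega,\lambda)=\sum_{s=-M}^{\infty}A_s(\Omega)\lambda^{-s}$, so that $Q\Psi=A\Psi$ is exactly the identity (\ref{bxAx}). Comparing the coefficients of the top power $\lambda^{M}$ on both sides and using $\xi_0\equiv1$ leaves only the term $b_0$ on the left and only the term $A_{-M}$ on the right, whence $b_0(\Omega,(z,[\gamma]))=A_{-M}(\Omega)$; in particular $b_0$ does not depend on $(z,[\gamma])$ or on $w$, since $A_{-M}$ does not (Theorem \ref{PAs} (1)). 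Therefore $Q$ has rank exactly $M$, i.e.\ $b_0\not\equiv0$, if and only if $A_{-M}\not\equiv0$, i.e.\ if and only if $Prin(\chi(Q))=A_{-M}(\Omega)\lambda^{M}+\cdots+A_0(\Omega)$ is genuinely of the shape (\ref{prin}) with leading index $M$. This is the first assertion.

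For the second assertion I would feed $b_0=A_{-M}$ into the weight bookkeeping. By Theorem \ref{PAs} (3) one has $A_{-M}(\alpha(\Omega))=j_\alpha(\Omega)^{K-M}A_{-M}(\Omega)$ for $\alpha\in\Gamma$, and for $Q\in\mathcal{D}_K^P$ the function $A_{-M}=b_0$ is holomorphic in $\Omega$ (and, in the case $F=\mathbb{Q}$, $n=1$, admits a holomorphic Fourier expansion at the cusps), so by Theorem \ref{AutoToDiff} or \ref{AutoToDiffE} it is an automorphic form for $\Gamma$ of weight $K-M$. If the leading coefficient $b_0$ of $Q$ equals a nonzero constant $c$, then the constant function $c$ is an automorphic form of weight $K-M$ for $\Gamma$; since $\mathcal{M}_{K-M}(\Gamma)=\{0\}$ for $K-M<0$ and since $j_\alpha(\Omega)^{K-M}\equiv1$ fails for $K-M>0$ (choose $\alpha\in\Gamma$ with $j_\alpha(\Omega)$ nonconstant), this forces $K=M$, and then $A_{-K}(\Omega)=A_{-M}(\Omega)=b_0\equiv c\in\mathcal{M}_0(\Gamma)$. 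Conversely, if $M=K$ and $A_{-K}(\Omega)\equiv c$, then $b_0=A_{-M}=A_{-K}\equiv c$, so the leading coefficient of $Q$ is the constant $c$.

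The only step requiring any care is the first one: extracting $b_0=A_{-M}$ cleanly from (\ref{bxAx}) and observing that this identity simultaneously forces the a priori $(z,[\gamma])$-dependent top coefficient $b_0$ to be a function of $\Omega$ alone. Everything after that is weight bookkeeping together with the vanishing $\mathcal{M}_{<0}(\Gamma)=\{0\}$ and the basic fact that a nonzero constant is an automorphic form only in weight $0$, so no estimates or new constructions are involved.
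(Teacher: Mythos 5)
Your proposal is correct and follows essentially the same route the paper intends: the corollary is stated as a consequence of the proofs of Theorem \ref{AutoToDiffZ} and \ref{AutoToDiff}, and the key point there is precisely the top-order comparison in (\ref{bxAx}) (equivalently the $j=0$ case of (\ref{bbxAx}), where the inductive polynomial is empty), yielding $b_0(\Omega,(z,[\gamma]))=A_{-M}(\Omega)$. The remaining weight bookkeeping via Theorem \ref{PAs} (3) and the fact that a nonzero constant is automorphic only in weight $0$ matches the paper's argument.
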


Let $\mathcal{D}_{K,M}^P$ be the subspace consisting of differential operators $Q$ whose ranks are at most  $M$.
We have $\mathbb{C}=\mathcal{D}_{K,0}^P \subset \mathcal{D}_{K,1}^P \subset \cdots \subset \mathcal{D}_{K,K}^P=\mathcal{D}_K^P.$

From Theorem \ref{ThmRef},
any element of 
$Q\in \mathcal{D}_{K,M}^P$
is parametrized by
the elements of 
the vector space
$\mathcal{M}_{K-M}' (\Gamma) \oplus \cdots \oplus \mathcal{M}_K' (\Gamma).$ 
Then, $\mathcal{D}_{K,M}^P$ has $\displaystyle \sum_{s=0}^M {\rm dim}\mathcal{M}_{K-s}' (\Gamma)$ complex parameters.

\begin{cor}\label{CorDim}
$$
{\rm dim}_\mathbb{C} \mathcal{D}_{K,M}^P = \bigoplus_{s=0}^M {\rm dim}_{\mathbb{C}} \mathcal{M}_{K-s}' (\Gamma).
$$
Especially,
$$
{\rm dim}_\mathbb{C} \mathcal{D}_{K}^P = \bigoplus_{s=0}^K {\rm dim}_{\mathbb{C}} \mathcal{M}_{K-s}' (\Gamma). 
$$
\end{cor}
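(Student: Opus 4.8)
The plan is to read the statement off the linear-algebra isomorphisms already in place, namely Theorem~\ref{ThmRef} and the rank characterization in the corollary immediately preceding \ref{CorDim}. First I would note that $\mathcal{D}_{K,M}^P$ is a $\mathbb{C}$-subspace of $\mathcal{D}_K^P$: the rank of $Q_1+Q_2$ is at most $\max(\mathrm{rank}\,Q_1,\mathrm{rank}\,Q_2)$ and a non-zero scalar multiple has the same rank, so the condition $\mathrm{rank}\,Q\le M$ cuts out a linear subspace, which fits into the chain $\mathcal{D}_{K,0}^P\subseteq\cdots\subseteq\mathcal{D}_{K,K}^P=\mathcal{D}_K^P$ recorded above; moreover addition, scalar multiplication, commutation with $P_{\Omega,z}$, the weight condition, holomorphy in $\Omega$ and the cusp condition are all preserved, so $\mathcal{D}_{K,M}^P$ is a genuine subspace of $\mathcal{D}_K^P$.

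Next I would invoke the composite $Prin\circ\chi\colon\mathcal{D}_K^P\xrightarrow{\sim}W_K=\bigoplus_{s=0}^K\mathcal{M}_{K-s}'(\Gamma)\lambda^s$, which Theorem~\ref{ThmRef} asserts is an isomorphism of $\mathbb{C}$-vector spaces. By the preceding corollary, $Q\in\mathcal{D}_K^P$ has rank exactly $M$ precisely when $Prin(\chi(Q))=A_{-M}(\Omega)\lambda^M+\cdots+A_0(\Omega)$ with $A_{-M}\not\equiv0$; hence $Q$ has rank at most $M$ exactly when $Prin(\chi(Q))$ has no term in $\lambda^s$ for $s>M$, that is, when $Prin(\chi(Q))\in\bigoplus_{s=0}^M\mathcal{M}_{K-s}'(\Gamma)\lambda^s$. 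Therefore $Prin\circ\chi$ restricts to a linear isomorphism of $\mathcal{D}_{K,M}^P$ onto $\bigoplus_{s=0}^M\mathcal{M}_{K-s}'(\Gamma)\lambda^s$: injectivity is inherited from $Prin\circ\chi$, and surjectivity holds because any element of the target already lies in $W_K$, hence equals $Prin(\chi(Q))$ for some $Q\in\mathcal{D}_K^P$, and such $Q$ has rank $\le M$ by the rank characterization, so $Q\in\mathcal{D}_{K,M}^P$. Passing to dimensions gives $\dim_\mathbb{C}\mathcal{D}_{K,M}^P=\sum_{s=0}^M\dim_\mathbb{C}\mathcal{M}_{K-s}'(\Gamma)$, and the second assertion is the case $M=K$, using $\mathcal{D}_{K,K}^P=\mathcal{D}_K^P$.

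The one step with genuine content — everything else being bookkeeping — is the surjectivity just used: it amounts to knowing that truncating a principal part at $\lambda$-degree $M$ really does yield a bona fide differential operator $Q$ of rank $\le M$ that commutes with $P_{\Omega,z}$, lies in $\mathcal{D}^P$ (holomorphy of $\Omega\mapsto b_j(\Omega,(z,[\gamma]))$ for generic $(z,[\gamma])$, and, in the exceptional case $F=\mathbb{Q}$, $n=1$, the holomorphic Fourier expansion at cusps), and has weight $K$ with respect to $\Gamma$. All of this is already supplied: Theorem~\ref{AutoToDiffZ} produces $Q$ from $P_{\Omega,z}$ and $A_0(\Omega),\dots,A_{-M}(\Omega)$ by the explicit induction (\ref{bbxAx})--(\ref{AxAx}) and shows it is uniquely determined, while Theorems~\ref{AutoToDiff} and \ref{AutoToDiffE} guarantee the holomorphy and the cusp conditions once the $A_j$ are automorphic forms of the stated weights. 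Hence no new argument is required and the corollary follows.
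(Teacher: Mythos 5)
Your proposal is correct and follows essentially the same route as the paper: the paper likewise derives the dimension count directly from the isomorphism $\mathcal{D}_K^P\xrightarrow{\chi}S_K^P\xrightarrow{Prin}W_K$ of Theorem \ref{ThmRef} together with the rank characterization in the preceding corollary, observing that $\mathcal{D}_{K,M}^P$ is parametrized by $\mathcal{M}_{K-M}'(\Gamma)\oplus\cdots\oplus\mathcal{M}_K'(\Gamma)$. Your write-up merely makes explicit the bookkeeping (that $\mathcal{D}_{K,M}^P$ is a subspace, and that surjectivity onto the truncated sum is supplied by Theorems \ref{AutoToDiffZ}, \ref{AutoToDiff} and \ref{AutoToDiffE}) that the paper leaves implicit.
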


Anyway,
if a differential operator $P_{\Omega,z}$  of weight $N$ with respect to the action of $\Gamma $ 
and automorphic forms $A_j(\Omega)\in \mathcal{M}_{K+j}' (\Gamma)$ $(j=0,\cdots,-M)$
are given,
there is the unique differential operator $Q_{\Omega,(z,[\gamma])}=Q \in \mathcal{D}_{K,M}^P$ which commutes with $P = P_{\Omega,z}$.
We set
$$
Q_{\Omega,(z,[\gamma])}(P_{\Omega,z};A_{-M}(\Omega) \lambda^M + \cdots+ A_0(\Omega)) = \chi^{-1} \circ Prin^{-1}(A_{-M}(\Omega) \lambda^M + \cdots+ A_0(\Omega)).
$$

\begin{exap}\label{ExapLameSpecial}
In this example,
we consider the special case for $B=2$ of the Lam\'e operator of (\ref{Lame}):
\begin{align}\label{Lame1}
P_{\Omega,z}=\frac{\partial^2 }{\partial z^2} - 2\wp(\Omega,z).
\end{align}
As we saw in Example \ref{ExapLame},
$P_{\Omega,z}$ is of weight $2$ for $SL(2,\mathbb{Z}).$

Automorphic forms for $\Gamma =SL(2,\mathbb{Z})$ are called elliptic modular forms.
Let $\mathcal{M}_k (\Gamma)$ be the vector space of elliptic modular forms of weight $k$.
According to Theorem \ref{AutoToDiff} and \ref{AutoToDiffE},
elliptic modular forms $A_j(\Omega)\in \mathcal{M}_{K+j} (\Gamma)$ $(j\in \{0,\cdots,-M\})$
 determine a differential operator $Q_{\Omega,(z,[\gamma])}$ of rank $M$ of weight $K$ with respect to $\Gamma$,
 where $Q_{\Omega,(z,[\gamma])}$ commutes with $P_{\Omega,z}.$
 
 In the following, we shall consider a simple case of $K=3$ and $M=3$.
 In this case, modular forms must be quite simple:
 $A_{-3}(\Omega)\equiv {\rm const} \in \mathcal{M}_0 (\Gamma)$ and 
 $ A_{-j}(\Omega)\equiv 0 \in \mathcal{M}_{3+j} (\Gamma)$ $(j=0, 1,2)$,
 because we have
 \begin{align}\label{ElModDim}
 \begin{cases}
 &\mathcal{M}_0(\Gamma)=\mathbb{C}, \\
  &\mathcal{M}_k (\Gamma)=\{0\} \quad (k=1,2,3)
 \end{cases}
 \end{align}
  (for detail, see \cite{S71}).
 So, let us obtain the differential operator $Q_{\Omega,(z,[\gamma])}=Q_{\Omega,(z,[\gamma])}(P_{\Omega,z}; \lambda^3)=Q_{\Omega,(z,[\gamma])}(P_{\Omega,z}; \lambda^3+0\lambda^2+0\lambda+0)$.
 
 First, we calculate $\Psi $ of (\ref{Psi}).
 As we saw in the proof of Proposition \ref{PropPsiz} and  Theorem \ref{ThmPsi},
 we can determine $\{\xi_s(\Omega,(z,[\gamma]),w)\}$ inductively.
 Taking $w=\frac{1}{2}\not \in \mathbb{Z} + \mathbb{Z} \Omega$,
  we have in fact
 \begin{align}\label{LameXi}
 \begin{cases}
 & \xi_0(\Omega,(z,[\gamma]),\frac{1}{2}) =1,\\
 & \xi_1 (\Omega,(z,[\gamma]),\frac{1}{2}) =-\zeta(\Omega,z) + \zeta_1 (\Omega),\\
 & \xi_2 (\Omega,(z,[\gamma]),\frac{1}{2}) = \frac{1}{2} \zeta^2(\Omega,z) +\frac{1}{2} \zeta_1^2(\Omega) - \zeta_1(\Omega) \zeta(\Omega,z) -\frac{1}{2} \wp(\Omega,z)+\frac{1}{2}\wp_1(\Omega),\\
 & \cdots ,
 \end{cases}
 \end{align}
 where $\zeta (\Omega,z)$ is the Weierstrass $\zeta$-function
 $$
 \zeta(\Omega,z)=\frac{1}{z} + \displaystyle \sum_{(n_1,n_2)\in \mathbb{Z}^2 -\{(0,0)\}} \Big(\frac{1}{z-n_1-n_2\Omega} +\frac{1}{n_1 +n_2 \Omega} +\frac{z}{(n_1 + n_2 \Omega)^2} \Big)
 $$
 and $\zeta_1(\Omega)=\zeta(\Omega,\frac{1}{2})$ and $\wp_1(\Omega)=\wp(\Omega,\frac{1}{2})$.
 (We note  that  the right hand side of (\ref{LameXi})  satisfy the transformation law (\ref{XiTrans}).)
 
 From our data, set
 $A(\Omega,\lambda)=Prin^{-1} (\lambda^3)=\displaystyle \sum_{s=-3} ^\infty A_s(\Omega) \lambda^{-s} = \lambda^3 + 0 \lambda^2 + 0 \lambda + 0+A_1(\Omega) \lambda^{-1}+\cdots$.
 For $\Psi(\Omega,(z,[\gamma]),w,\lambda)$ given by (\ref{LameXi}),
 we can  uniquely find the differential operator 
 $Q_{\Omega,(z,[\gamma])} = \displaystyle \sum_{j=0}^3 b_j(\Omega,(z,[\gamma])) \frac{\partial ^{3-j}}{\partial z^{3-j}}$
 satisfying 
 $Q_{\Omega,(z,[\gamma])} \Psi (\Omega,(z,[\gamma]),w,\lambda)=A(\Omega,\lambda) \Psi (\Omega,(z,[\gamma]),w,\lambda)$.
In fact,
by a direct calculation as in the proof of Theorem \ref{AutoToDiffZ},
we can obtain
 $
 b_0(\Omega,(z,[\gamma])) =1,
 b_1(\Omega,(z,[\gamma]))  = 0, 
 b_2(\Omega,(z,[\gamma]))  = -3 \wp(\Omega,z) $ and  
 $
 b_3(\Omega,(z,[\gamma]))  = -\frac{3}{2} \frac{\partial}{\partial z}\wp(\Omega,z).
 $
 Therefore,
 \begin{align}\label{LameQ}
 Q_{\Omega,(z,[\gamma])}
= Q_{\Omega,(z,[\gamma])}(P_{\Omega,z}; \lambda^3)
  = \frac{\partial^3}{\partial z^3} -3 \wp(\Omega,z) \frac{\partial}{\partial z} -\frac{3}{2} \Big( \frac{\partial}{\partial z}\wp (\Omega,z) \Big)
 \end{align}
 is the differential operator we want.
 This is of weight $3$ with respect to $\Gamma =SL(2,\mathbb{Z})$.
 
 From Theorem \ref{ThmRef}, Corollary  \ref{CorDim} and the fact (\ref{ElModDim}),
 $Q_{\Omega,(z,[\gamma])}$ of (\ref{LameQ})
 is the unique element of $\mathcal{D}_3^P$ for $P=P_{\Omega,z}$ of (\ref{Lame1})
 up to a constant factor.
  
 We remark that 
 the relation between such a Lam\'e operator $P_{\Omega,z}$ of (\ref{Lame1}) and $Q_{\Omega,(z,[\gamma])}$  of (\ref{LameQ}) 
 were precisely studied from the viewpoint of integrable systems or physics (see \cite{W}, \cite{DMN} or \cite{Mulase}).
 Our result gives a new interpretation on this topic from the viewpoint of elliptic modular forms.
\end{exap}

\subsection{The family of algebraic curves $\mathcal{R}_\Omega$}

For  $X\in \mathbb{C}=\mathbb{P}^1 (\mathbb{C})-\{\infty\}$
and $P_{\Omega,z}$ of (\ref{MotherEq}),
we consider the  differential equation $P_{\Omega,z} u = Xu$ 
and its space of solutions $\mathcal{L}(P_{\Omega,z},X)$.
Suppose $Q_{\Omega,(z,[\gamma])}$ of (\ref{Q}) commutes with $P_{\Omega,z}$.
Letting $\lambda_1,\cdots,\lambda_N$ be the disjoint solutions of $\lambda^N = X$,
$\Psi(\Omega, (z,[\gamma]),w,\lambda_j)$ gives an eigenvector for the eigenvalue $A(\Omega,\lambda_j)$.
Let $\mathcal{Q}_{\Omega,[\gamma],X}$ be the linear operator derived from $Q_{\Omega,(z,[\gamma])}$ on $\mathcal{L}(P_{\Omega,z},X)$. 
The characteristic  polynomial of $\mathcal{Q}_{\Omega,[\gamma],X}$ is given by
\begin{align}\label{eigen}
\prod_{j=1}^N (Y-A(\Omega,\lambda_j)).
\end{align}
Due to Corollary \ref{LemCj},
(\ref{eigen}) gives a polynomial
$F_\Omega (X,Y) $ in $X$ and $Y$.
From Theorem
\ref{ThmRiemannSurfacez},
it follows that
$F_\Omega (P_{\Omega,z},Q_{\Omega,(z,[\gamma])})=0$.
We set
\begin{align}\label{FOmega}
F_\Omega (X,Y) = \displaystyle \sum_{j,k} f_{j,k} (\Omega) X^j Y^k .
\end{align}

\begin{thm}\label{ThmRiemannSurface}
Let $P=P_{\Omega,z}$ be the differential operator of (\ref{MotherEq}).
Take $Q=Q_{\Omega,(z,[\gamma])} \in \mathcal{D}_K^P.$
Then,
the coefficient
$f_{j,k} (\Omega)$ in (\ref{FOmega})
is an automorphic form of weight $NK-Nj-Kk$ for $\Gamma$.
\end{thm}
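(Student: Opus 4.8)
The plan is to track how the characteristic polynomial $F_\Omega(X,Y)$ transforms under the substitution $\alpha\in\Gamma$, using the weight of the operators involved and the invariance of the Baker--Akhiezer function from Theorem \ref{ThmPsi}(2). First I would recall the two relevant weights: $P=P_{\Omega,z}$ has weight $N$, so under $(\Omega,z)\mapsto(\Omega_1,z_1)=(\alpha(\Omega),z/j_\alpha(\Omega))$ we have $P_{\Omega_1,z_1}=j_\alpha(\Omega)^N P_{\Omega,z}$, and $Q=Q_{\Omega,(z,[\gamma])}\in\mathcal{D}_K^P$ has weight $K$, so $Q_{\Omega_1,(z_1,[\gamma_1])}=j_\alpha(\Omega)^K Q_{\Omega,(z,[\gamma])}$. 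Consequently the eigenvalue decomposition of $\mathcal{Q}_{\Omega,[\gamma],X}$ transforms: by Theorem \ref{PAs}(3), $A_s(\alpha(\Omega))=j_\alpha(\Omega)^{K+s}A_s(\Omega)$, and since $\lambda_j^N=X$ while the transformed spectral parameter is $j_\alpha(\Omega)\lambda_j$ (so the transformed $X$ is $j_\alpha(\Omega)^N X$), one gets $A(\alpha(\Omega), j_\alpha(\Omega)\lambda_j)=j_\alpha(\Omega)^K A(\Omega,\lambda_j)$. In other words, the eigenvalue $Y$ scales by $j_\alpha(\Omega)^K$ while $X$ scales by $j_\alpha(\Omega)^N$ when we move from the curve $\mathcal{R}_\Omega$ to $\mathcal{R}_{\alpha(\Omega)}$.

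Next I would use the defining property: $F_\Omega(X,Y)=\prod_{j=1}^N(Y-A(\Omega,\lambda_j))$. Apply the substitution to get
\begin{align*}
F_{\alpha(\Omega)}\big(j_\alpha(\Omega)^N X,\ j_\alpha(\Omega)^K Y\big)
=\prod_{j=1}^N\big(j_\alpha(\Omega)^K Y-A(\alpha(\Omega),j_\alpha(\Omega)\lambda_j)\big)
=j_\alpha(\Omega)^{NK}\,F_\Omega(X,Y),
\end{align*}
using the scaling of $A$ established above and pulling $j_\alpha(\Omega)^K$ out of each of the $N$ factors. Now substitute the expansion $F_\Omega(X,Y)=\sum_{j,k}f_{j,k}(\Omega)X^jY^k$ into both sides: the left side becomes $\sum_{j,k}f_{j,k}(\alpha(\Omega))\,j_\alpha(\Omega)^{Nj+Kk}X^jY^k$, and comparing coefficients of $X^jY^k$ gives
\begin{align*}
f_{j,k}(\alpha(\Omega))\,j_\alpha(\Omega)^{Nj+Kk}=j_\alpha(\Omega)^{NK}f_{j,k}(\Omega),
\end{align*}
i.e. $f_{j,k}(\alpha(\Omega))=j_\alpha(\Omega)^{NK-Nj-Kk}f_{j,k}(\Omega)$, which is exactly the weight-$(NK-Nj-Kk)$ transformation law.

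It remains to verify the holomorphy conditions of Definition \ref{dfAutomorph}: that $\Omega\mapsto f_{j,k}(\Omega)$ is holomorphic on $\mathbb{H}_n^g$, and in the exceptional case $F=\mathbb{Q}$, $n=1$, that it has a holomorphic Fourier expansion at the cusps. For holomorphy I would invoke Lemma \ref{LemCj0z}(2) and Corollary \ref{LemCj}: the $f_{j,k}(\Omega)$ are polynomial expressions in $X$-coefficients built from the entries $c_{l,m}(w,X)$ of the representation matrix of $\mathcal{Q}_{\Omega,[\gamma],X}$, which in turn are polynomials in special values of the coefficients $a_l(\Omega,z)$ of $P$ and $b_k(\Omega,(z,[\gamma]))$ of $Q$; these are holomorphic in $\Omega$ for generic $(z,[\gamma])$ by hypothesis on $P$ and by $Q\in\mathcal{D}_K^P$, and since $f_{j,k}(\Omega)$ is independent of $(z,[\gamma])$ and $w$, the generic-$(z,[\gamma])$ holomorphy forces genuine holomorphy on all of $\mathbb{H}_n^g$. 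For the cusp condition when $F=\mathbb{Q}$, $n=1$, I would similarly feed the holomorphic Fourier expansions of the $a_l$ (assumption \eqref{Fouriera}) and of the $b_k$ (part of the definition of $\mathcal{D}_K^P$ in this case, see Definition \ref{dfDP}(ii) and Theorem \ref{AutoToDiffE}) through the same polynomial formulas; a homogeneity bookkeeping in $j_\alpha(\Omega)$ exactly as above shows $j_\alpha(\Omega)^{-(NK-Nj-Kk)}f_{j,k}(\alpha(\Omega))$ inherits a holomorphic Fourier expansion. The main obstacle is this last point: making the cusp bookkeeping airtight requires care that the polynomial combination producing $f_{j,k}$ is genuinely homogeneous of the right weight under $SL(2,\mathbb{Z})$, so that dividing by $j_\alpha(\Omega)^{NK-Nj-Kk}$ and using Lemma \ref{LemFourier} yields an expansion with no negative-index terms; everything else is a direct consequence of the weight calculus already set up in Theorems \ref{ThmPsi} and \ref{PAs}.
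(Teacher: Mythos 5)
Your transformation-law computation is correct and is exactly the paper's: both arguments use the weights $N$ of $P$ and $K$ of $Q$ to get the action $(\Omega,X,Y)\mapsto(\alpha(\Omega),j_\alpha(\Omega)^N X, j_\alpha(\Omega)^K Y)$ and then compare coefficients of $X^jY^k$ in $F_{\alpha(\Omega)}(X_1,Y_1)=j_\alpha(\Omega)^{NK}F_\Omega(X,Y)$. Where you diverge is in verifying that $f_{j,k}$ actually lies in $\mathcal{M}_{NK-Nj-Kk}(\Gamma)$, i.e.\ holomorphy in $\Omega$ and, in the case $F=\mathbb{Q}$, $n=1$, the cusp condition. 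You route this through Lemma \ref{LemCj0z}(2) and Corollary \ref{LemCj}: the $f_{j,k}$ as polynomials in special values of the $a_l$ and $b_k$, with a separate Fourier-expansion bookkeeping that you yourself flag as the ``main obstacle.'' The paper avoids this entirely with a shorter observation: since $F_\Omega(X,Y)=\prod_j(Y-A(\Omega,\lambda_j))$ is symmetric in the $\lambda_j$ and is already known (Corollary \ref{LemCj}) to be a polynomial in $X$, each $f_{j,k}(\Omega)$ is a finite polynomial expression in the $A_s(\Omega)$; but Theorems \ref{AutoToDiff} and \ref{AutoToDiffE} (together with $Q\in\mathcal{D}_K^P$ and Theorem \ref{ThmRef}) already guarantee that every $A_s(\Omega)$ is an automorphic form of weight $K+s$, so the ring structure of $\mathcal{M}(\Gamma)$ delivers holomorphy and the cusp condition simultaneously, with no extra Fourier analysis. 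Your approach would work, but the loose end you identify dissolves if you instead express $f_{j,k}$ through the $A_s(\Omega)$ rather than through the representation matrix; I would recommend restructuring the second half of the argument accordingly.
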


\begin{proof}
From Theorem \ref{ThmRef},
we suppose that
$Q\in \mathcal{D}_K^P$ is given by
$Q=Q_{\Omega,(z,[\gamma])} (P_{\Omega,z};A_{-M}(\Omega)\lambda^M + \cdots+A_0(\Omega))$.
Then,
$\chi(Q)=Prin^{-1} (A_{-M}(\Omega)\lambda^M + \cdots+A_0(\Omega))$ 
is given by
 a series $A(\Omega,\lambda)=\displaystyle \sum_{s=-M}^\infty A_s(\Omega) \lambda^{-s}$.
Due to Theorem \ref{AutoToDiff} and \ref{AutoToDiffE},
$A_s(\Omega)$ $(s\geq 1)$ are automorphic forms of weight $s + K$.
Since the set of automorphic forms is a ring,
together with the argument in Section 1.3,
the coefficients of
the polynomial
$\displaystyle \prod _{j=1}^N (Y-A(\Omega,\lambda_j)) $
in $X $ and $Y$,
where $\lambda_j^N =X$,
are automorphic forms for $\Gamma$.

Since $P_{\Omega,z}$ ($Q_{\Omega,(z,[\gamma])}$) is of weight $N$ ($K$, resp.) for the action of $\Gamma$,
we have the action of $\alpha\in \Gamma$
given by
$(\Omega,X,Y)\mapsto (\alpha (\Omega), j_\alpha (\Omega)^N X, j_\alpha (\Omega)^K Y)=(\alpha(\Omega),X_1,Y_1).$
When we describe $F_\Omega (X,Y)$ as in (\ref{FOmega}),
we have
\begin{align*}
f_{j,k} (\alpha (\Omega))=j_\alpha (\Omega)^{NK-Nj-Kk} f_{j,k}(\Omega).
\end{align*}
Hence, by comparing the coefficients,
$f_{j,k} (\Omega)$ is of weight $NK-Nj-Kk.$
\end{proof}

By Theorem \ref{ThmRef} and Theorem \ref{ThmRiemannSurface},
the family $\{F_\Omega (X,Y)=0 | \Omega \in \mathbb{H}_n^g\}$ of algebraic curves is uniquely determined by the differential operator $P_{\Omega,z}$ and given principal part
$A_{-M}(\Omega)\lambda^M + \cdots + A_0(\Omega)$ of a Laurent series $Prin^{-1}(A_{-M}(\Omega)\lambda^M + \cdots + A_0(\Omega))\in S_K^P$. 
We denote such a family by
$\mathcal{F}(P_{\Omega,z};A_{-M}(\Omega)\lambda^M + \cdots + A_0(\Omega)) $ whose members are $\mathcal{R}_{\Omega}=\mathcal{R}_\Omega (P_{\Omega,z};A_{-M}(\Omega)\lambda^M + \cdots + A_0(\Omega)).$

\begin{exap}\label{ExapLameRiemann}
Let $P_{\Omega,z}$ ($Q_{\Omega,(z,[\gamma])}=Q_{\Omega,(z,[\gamma])}(P_{\Omega,z}:\lambda^3)$, resp.) be the operator of (\ref{Lame1}) ((\ref{LameQ}), resp.),
 as we saw in Example \ref{ExapLameSpecial}.
We note that the $\wp$-function satisfies the Weierstrass equation
\begin{align}\label{WEQ}
\Big(\frac{\partial}{\partial z}\wp(\Omega,z)\Big)^2 = 4 \wp ^3 (\Omega,z) -g_2(\Omega) \wp(\Omega,z) -g_3 (\Omega),
\end{align}
 where
 $g_2(\Omega)=\displaystyle \sum_{(n_1,n_2)\in \mathbb{Z}^2-\{(0,0)\}}\frac{60}{(n_1 + n_2 \Omega)^4}$ and $g_3(\Omega)=\displaystyle \sum_{(n_1,n_2)\in \mathbb{Z}^2-\{(0,0)\}}\frac{140}{(n_1 + n_2 \Omega)^6}.$
 It is well-known that $g_2(\Omega) \in \mathcal{M}_4(\Gamma)$ and $g_3(\Omega)\in \mathcal{M}_6 (\Omega)$ (for detail, see \cite{S71}).
Using the relation (\ref{WEQ}),   
 we can see that 
 the defining equation $F_\Omega (X,Y)=0$ of $\mathcal{R}_\Omega=\mathcal{R}_\Omega (P_{\Omega,z};\lambda^3)$ is given by
 $$
 F_\Omega (X,Y) = Y^2 -X^3 - \frac{g_2(\Omega)}{4} X -\frac{g_3 (\Omega)}{4}.
 $$
 So,   $f_{0,2}=f_{3,0}(\Omega)=1, f_{1,0} (\Omega) = \frac{g_2(\Omega)}{4}$ and $f_{0,0}(\Omega)=\frac{g_3(\Omega)}{4}$.
 In this case,
 the family
$\mathcal{F}(P_{\Omega,z};\lambda^3)$ consists of non-singular algebraic curves of genus $1$.
\end{exap}

Let $\pi_\Omega:\mathcal{R}_\Omega \rightarrow \mathbb{P}^1(\mathbb{C})$ be the canonical projection
given by $(X,Y) \mapsto X$.

\subsection{A criterion for single-valued differential operators with actions of $\Gamma$}

In this subsection, 
we use the same notation with that of Section 2.4 and Section 2.5.
Moreover,
we assume
\begin{align}\label{assume}
\text{ there exists } s \hspace{1mm} ( s \geq -M),
\text{ where }  N \text{ and } s \text{ are coprime}, 
\text{ such that } A_s (\Omega)\not \equiv 0
\end{align}
for $\{A_s(\Omega)\}$ of (\ref{starstar}).

\begin{rem}
There are so many cases
that the condition (\ref{assume}) holds.
For example,
if $N$ and $M$ are coprime and $Q_{\Omega,(z,[\gamma])}$ is given by
$Q_{\Omega,(z,[\gamma])} (P_{\Omega,z}; \lambda^M + A_{-M+1} (\Omega) \lambda^{M-1} + \cdots + A_0(\Omega))$
(namely the case of $A_{-M}(\Omega)\equiv 1$),
the condition (\ref{assume}) is satisfied. 
\end{rem}

By a similar argument as in Section 1.4,
we have the eigenfunction 
\begin{align}\label{psi}
\psi(\Omega,(z,[\gamma]),w,p)=\displaystyle \sum_{l=0}^{N-1} h_l(\Omega,w,p) C_l(\Omega,(z,[\gamma]),w,p)
\end{align}
of the operator $\mathcal{Q}_{\Omega,[\gamma],X}$ on $\mathcal{L}(P_{\Omega,z},X)$.
Here,
$C_l(\Omega,(z,[\gamma]),w,X) \in \mathcal{L} (P_{\Omega,z},X)$
such that
\begin{align}\label{Cl}
\frac{\partial^r}{\partial z^r} C_l(\Omega,(z,[\gamma]),w,X)\Big|_{(z,[\gamma])=(w,[id])} =\delta_{r,l}.
\end{align}

We note that 
the function $C_l(\Omega,(z,[\gamma]),w,X)$ of (\ref{Cl}) satisfies
the transformation law
\begin{align}\label{ClTrans}
C_l(\Omega_1,(z_1,[\gamma_1]),w_1,X_1) = \frac{1}{j_\alpha (\Omega)^l} C_l(\Omega,(z,[\gamma]),w,X),
\end{align}
where $\alpha \in \Gamma$ and  $(\Omega_1,(z_1,[\gamma_1]),w_1,X_1) $ is given in (\ref{11111}),
because
the equation $P_{\Omega,z} u = X u$ 
coincides with $P_{\Omega_1,z_1} u = X_1 u$
under the transformation $(\Omega,(z,[\gamma]),w,X)\mapsto (\Omega_1,(z_1,[\gamma_1]),w_1,X_1)$
and it holds that
$$
\frac{\partial^r}{\partial z^r}C_l(\Omega_1,(z_1,[\gamma_1]),w_1,X_1) = \Big(\frac{d z_1}{dz} \Big)^r \frac{\partial^r}{\partial z_1^r} C_l(\Omega_1,(z_1,[\gamma_1]),w_1,X_1) = \frac{1}{j_\alpha (\Omega)^l} \delta_{r, l}.
$$
If $p=(X,Y) \in \mathcal{R}_\Omega$, 
set $p_1=(X_1,Y_1)=(j_\alpha (\Omega)^N X, j_\alpha (\Omega)^M Y)$.
From (\ref{FOmega}),
$(X_1,Y_1)\in \mathcal{R}_{\Omega_1}$.
The vector ${}^t(h_0(\Omega,w,p),\cdots,h_{N-1}(\Omega,w,p))$ 
admits a transformation law
\begin{align}
\begin{pmatrix}
h_0(\Omega_1,w_1,p_1)\\
h_1(\Omega_1,w_1,p_1)\\
\cdots \\
h_{N-1}(\Omega_1,w_1,p_1)
\end{pmatrix}
=
\begin{pmatrix}
h_0(\Omega,w,p)\\
j_\alpha (\Omega) h_1(\Omega,w,p)\\
\cdots \\
j_\alpha(\Omega)^{N-1} h_{N-1}(\Omega,w,p)
\end{pmatrix}.
\end{align}

\begin{thm}\label{ThmCriteriaAuto}
Assume that
$N$ is  a prime number
and
the differential operators $P_{\Omega,z}$ of (\ref{MotherEq})
and $Q_{\Omega,(z,[\gamma])}$ of (\ref{Q})
satisfy the condition (\ref{assume}).
Suppose the arithmetic genus  $\varpi(\mathcal{R}_\Omega)$
is smaller than $N$ for any $\Omega \in \mathbb{H}_n^g$.
Then all coefficients of $Q_{\Omega,(z,[\gamma])}$ are single-valued functions of $z$.
\end{thm}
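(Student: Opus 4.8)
The plan is to reduce the assertion, for each fixed $\Omega$, to the single-variable criterion of Section 1.5, and then to spread single-valuedness from generic $\Omega$ to all of $\mathbb{H}_n^g$ using holomorphy in $\Omega$.

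First I would fix $\Omega\in\mathbb{H}_n^g$ and set $P_z=P_{\Omega,z}$, $Q_{(z,[\gamma])}=Q_{\Omega,(z,[\gamma])}$, $\mathcal{N}=\mathcal{N}_\Omega$, $\mathfrak{X}=\mathfrak{X}_\Omega$. Then $P_z$ is of the form (\ref{Pz}) with coefficients holomorphic on $\mathbb{C}-\mathcal{N}_\Omega$, and $Q_{(z,[\gamma])}$ is of the form (\ref{Qz}) commuting with it, so Theorem \ref{PAs}(1) gives $Q_{(z,[\gamma])}\Psi/\Psi=A(\Omega,\lambda)=\sum_{s\ge -M}A_s(\Omega)\lambda^{-s}$, which is exactly a series of the form (\ref{star}) with $A_s=A_s(\Omega)$. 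The algebraic curve attached to the pair $(P_z,Q_{(z,[\gamma])})$ in Section 1.3, namely the curve defined by $\prod_{j=1}^N(Y-A(\Omega,\lambda_j))$, is then precisely the fibre $\mathcal{R}_\Omega$ with defining polynomial $F_\Omega(X,Y)$ of (\ref{FOmega}), so its arithmetic genus is $\varpi(\mathcal{R}_\Omega)<N$ by hypothesis.

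Next I would verify the coprimality hypothesis (\ref{assumez}) at a generic $\Omega$. Condition (\ref{assume}) furnishes an $s_0\ge -M$ with $N$ and $s_0$ coprime and $A_{s_0}(\Omega)\not\equiv 0$ as a holomorphic function on $\mathbb{H}_n^g$; its zero set $Z$ is a proper analytic subset, so for $\Omega\notin Z$ we have $A_{s_0}(\Omega)\ne 0$, i.e. (\ref{assumez}) holds for the fixed-$\Omega$ data. Since $N$ is prime and $\varpi(\mathcal{R}_\Omega)<N$, Theorem \ref{ThmCriteria} applies and shows that every coefficient $b_k(\Omega,(z,[\gamma]))$ of $Q_{\Omega,(z,[\gamma])}$ is single-valued on $\mathbb{C}-\mathcal{N}_\Omega$ for every $\Omega\in\mathbb{H}_n^g\setminus Z$.

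It remains to remove the genericity in $\Omega$, which I expect to be the main obstacle. Fix any $\Omega_0\in\mathbb{H}_n^g$, a base point $w$, a generic value of $z$, and two arcs $\gamma,\gamma'$ from $w$ to $z$; since the coefficients of $P_{\Omega,z}$ are jointly (mero)morphic, the pole set $\mathcal{N}_\Omega$ varies continuously with $\Omega$, so one may choose these arcs to avoid the poles for all $\Omega$ in a connected neighbourhood $U$ of $\Omega_0$, whence the classes $[\gamma]_\Omega,[\gamma']_\Omega\in\pi_1(\mathbb{C}-\mathcal{N}_\Omega)$ are defined throughout $U$. By Theorem \ref{AutoToDiff} and Theorem \ref{AutoToDiffE} the function $\Omega\mapsto b_k(\Omega,(z,[\gamma]_\Omega))-b_k(\Omega,(z,[\gamma']_\Omega))$ is holomorphic on $U$, and by the previous step it vanishes on the non-empty open set $U\setminus Z$; hence it vanishes identically on $U$, in particular at $\Omega_0$. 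Since any two arcs from $w$ to $z$ differ by an element of $\pi_1(\mathbb{C}-\mathcal{N}_{\Omega_0})$ and this group is generated by loops, $b_k(\Omega_0,(z,[\gamma]))$ is independent of $[\gamma]$ for generic $z$, hence for all $z$ by analyticity in $z$. Thus every coefficient of $Q_{\Omega,(z,[\gamma])}$ is single-valued on $\mathbb{C}-\mathcal{N}_\Omega$ for every $\Omega$, which is the assertion. The only genuinely delicate point is making the transport of homotopy classes across varying $\Omega$ precise; the rest is the reduction to Theorem \ref{ThmCriteria} together with the identity theorem.
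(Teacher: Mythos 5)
Your proposal follows essentially the same route as the paper: for each fixed $\Omega$ the pair $(P_{\Omega,z},Q_{\Omega,(z,[\gamma])})$ is placed in the single-variable setting of Section 1, the bound $\varpi(\mathcal{R}_\Omega)<N$ on the number of poles of $p\mapsto\psi(\Omega,(z,[\gamma]),w,p)$ is invoked, and the argument of Theorem \ref{ThmCriteria} is applied. Your additional step --- noting that (\ref{assume}) only guarantees $A_{s_0}(\Omega)\not\equiv 0$, hence securing the pointwise hypothesis (\ref{assumez}) only for $\Omega$ outside the zero set of $A_{s_0}$, and then propagating single-valuedness to every $\Omega$ via holomorphy in $\Omega$ and the identity theorem --- is a legitimate refinement of a point that the paper's one-line proof passes over silently, and it does not alter the underlying argument.
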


\begin{proof}
As in  Theorem \ref{Thmphipolez} and Corollary \ref{CorHantei},
the function
$\mathcal{R}_\Omega  \ni p \mapsto \psi (\Omega,(z,[\gamma]),w,p) \in \mathbb{P}^1(\mathbb{C})$
has at most
$\varpi (\mathcal{R}_\Omega)$
 poles.
 By our assumption,
 we have
 $\varpi (\mathcal{R}_\Omega) <N$ for any $\Omega$.
By a similar argument to the proof of Theorem \ref{ThmCriteria},
we can prove that
every coefficient $Q_{(\Omega,(z,[\gamma]))}$  are single-valued in $z$.
\end{proof}

The phenomenon
that we saw in Example \ref{ExapLame}, \ref{ExapLameSpecial} and \ref{ExapLameRiemann}
gives a typical example of the criterion of Theorem \ref{ThmCriteriaAuto}.
Namely,
the rank of the Lam\'e operator
$P_{\Omega,z}$ of (\ref{Lame1})
is the prime number $N=2$,
the commutative operator $Q_{\Omega,(z,[\gamma])}$ of (\ref{LameQ}) of rank $M=3$
is single-valued,
where
$P_{\Omega,z}$ and $Q_{\Omega,(z,[\gamma])}$
 give a point of
the non-singular curve 
$\mathcal{R}_\Omega \in \mathcal{F}(P_{\Omega,z};\lambda^3)$
of genus $1<2=N$.

\section*{Acknowledgment}
This work is supported by 
The JSPS Program for Advancing Strategic International Networks to Accelerate the Circulation of Talented Researchers
``Mathematical Science of Symmetry, Topology and Moduli, Evolution of International Research Network based on OCAMI''
and
The Sumitomo Foundation Grant for Basic Science Research Project (No.150108).

\begin{center}
\hspace{8.8cm}\textit{Atsuhira  Nagano}\\
\hspace{8.8cm}\textit{Department of Mathematics}\\
\hspace{8.8cm}\textit{King's College London}\\
\hspace{8.8cm}\textit{Strand, London, WC2R 2LS}\\
\hspace{8.8cm}\textit{The United Kingdom}\\
 \hspace{8.8cm}\textit{(E-mail: atsuhira.nagano@gmail.com)}
  \end{center}


\begin{thebibliography}{20}


\bibitem[BC]{BC} J. L. Burchnall and  T. W. Chaundy,
\newblock{\em Commutative ordinary differential operators,}
\newblock{Proc. London Math. Soc. {\bf 21}, 1922, 420-440.}



\bibitem[DMN]{DMN} B. A. Dubrovin,  V. B. Matveev and  S. P. Novikov,
\newblock{\em Non-linear equations of Korteweg-de Vries type, finite-zone linear operators and abelian varieties,}
\newblock{Uspekhi Math. Nauk {\bf 31}, 1976, 55-136.}



\bibitem[EMOF]{EMOF} M. Erdelyi, W. Magnus, F. Oberhettinger  and T. G. Francesco,
\newblock{\em Higher transcendental functions,}
\newblock{McGraw-Hill, 1955.}



\bibitem[EZ]{EZ} M. Eichler and D. Zagier,
\newblock{\em The Theory of Jacobi Forms,}
\newblock{Birkh\"auser, Progress in Math. {\bf 55}, 1985.}

\bibitem[G]{G} P. A. Griffiths,
\newblock{\em Intoroduction to Algebraic Curves,}
\newblock{Translations of Math. Monographs {\bf 76} Amer. Math. Soc., 1989.}

\bibitem[K]{Kr} I. Krichever,
\newblock{\em Integration of nonlinear equations by the methods of algebraic geometry,}
\newblock{Funkt. Anal. Appl., {\bf 11}, 1977,  12-26.}







\bibitem[KS]{KS} I. Krichever and T. Shiota,
\newblock{\em Soliton equations and the Riemann-Schottky problem,}
\newblock{Adv. Lec. Math. {\bf 25}, 2013, 205-258.}





\bibitem[Ml]{Mulase} M. Mulase,
\newblock{\em Category of vector bundles on algebraic curves and infinite dimensional Grassmanianns,}
\newblock{Internat. J. Math, {\bf 1}, 1990, 293-342.}


\bibitem[Mm]{Mum} D. Mumford,
\newblock{\em An algebro-geometric construction of commuting
operators and of solutions to the Toda lattice
equation, Korteweg deVries equation and
related non-linear equations,}
\newblock{Intl. Symp. on Algebraic Geometry
Kyoto, 1977, 115-153.}


\bibitem[MM]{MM} H. P. McKean and P. M. Moerbeke,
\newblock{\em The spectrum of Hill's equation,}
\newblock{Invent. Math. {\bf 30}, 1975, 217-274.}

\bibitem[Sm1]{S71} G. Shimura,
\newblock {\em Introduction to the Arithmetic Theory of Automorphic Functions,}
\newblock {Prinston Univ. Press, 1971.}




\bibitem[Sm2]{S95} G. Shimura,
\newblock {\em Arithmeticity of the special values of various zeta functions and the periods of Abelian integrals,}
\newblock {S\^ugaku expositions {\bf 8} (1), 1995, 17-38.}


\bibitem[So]{Shiota} T. Shiota,
\newblock {\em  Characterization of Jacobian varieties in terms of soliton equations,}
\newblock {Invent. Math. {\bf 83} (2),  1986, 333-382.}





\bibitem[T]{Takemura} K. Takemura,
\newblock {\em Analytic continuation of eigenvalues of Lam\'e operator,}
\newblock {J. Diff. Eq. {\bf 228} (1), 2004, 1-16.}


\bibitem[W]{W}  G. Wallenberg,
\newblock {\em  \"Uber die Vertauschbarkeit homogener linearer Differentialaus-dr\"ucke,}
\newblock {Archiv der Math. Phys. {\bf 4},  1903, 252-268.}


\bibitem[WW]{WW} E. T. Whittaker and G. N. Watson,
\newblock{\em A course of modern analysis,}
\newblock{Cambridge University Press, 1962.}



\bibitem[Z]{Ziegler} 
C. Ziegler,
\newblock {\em Jacobi forms of higher degree,}
\newblock {Abh. Math. Sem. Univ. Hamburg {\bf 59},  1989, 191-224.}






\end{thebibliography}
\end{document}